\theoremstyle{plain}
\newtheorem{theorem}{Theorem}
\newtheorem{lemma}[theorem]{Lemma}
\newtheorem{proposition}[theorem]{Proposition}
\newtheorem{corollary}[theorem]{Corollary}
\newtheorem{conjecture}[theorem]{Conjecture}
\newtheorem{hypothesis}[theorem]{Hypothesis}
\numberwithin{theorem}{section}
\numberwithin{equation}{theorem}
\theoremstyle{definition}
\newtheorem{definition}[theorem]{Definition}
\newtheorem{example}[theorem]{Example}
\newtheorem{remark}[theorem]{Remark}
\newtheorem{question}[theorem]{Question}
\newtheorem*{question*}{Question}
\DeclareMathOperator{\tr}{tr}
\DeclareMathOperator{\injdim}{injdim}
\DeclareMathOperator{\End}{End}
\DeclareMathOperator{\Ext}{Ext}
\DeclareMathOperator{\Tor}{Tor}
\DeclareMathOperator{\GKdim}{GKdim}
\DeclareMathOperator{\Kdim}{Kdim}
\DeclareMathOperator{\Aut}{Aut}
\DeclareMathOperator{\gr}{gr}
\DeclareMathOperator{\Mod}{Mod}
\def\Rpf{{\sf r}}
\def\Pty{{\sf p}}
\def\mmod{\operatorname{mod}}
\def\qmod{\operatorname{qmod}}
\newcommand{\inth}{\textstyle \int}
\begin{document}

\title{Noncommutative Auslander theorem}

\author{Y.-H. Bao, J.-W. He and J. J. Zhang}

\address{Bao: School of Mathematical Sciences,
Anhui University, Hefei, Anhui, 230601, China}

\email{baoyh@ahu.edu.cn}

\address{He: Department of Mathematics,
Hangzhou Normal University,
%16 Xuelin Rd,
Hangzhou Zhejiang 310036, China}
\email{jwhe@hznu.edu.cn}

\address{Zhang: Department of Mathematics, Box 354350,
University of Washington, Seattle, Washington 98195, USA}

\email{zhang@math.washington.edu}

\begin{abstract}
In 1960s Maurice Auslander proved the following important result.
Let $R$ be the commutative polynomial ring $\mathbb{C}[x_1,\dots,x_n]$, 
and let $G$ be a finite small subgroup of $GL_n(\mathbb{C})$ acting on 
$R$ naturally. Let $A$ be the fixed subring $R^G:=\{a\in R|g(a)=a,
\text{ for all } g\in G \}$. Then the endomorphism ring of the right $A$-module 
$R_A$ is naturally isomorphic to the skew group algebra $R\ast G$. In this paper, 
a version of Auslander Theorem is proven for the following classes of 
noncommutative algebras: (a) noetherian PI local (or connected graded)
algebras of finite injective dimension,
(b) universal enveloping algebras of finite
dimensional Lie algebras, and (c) noetherian graded down-up
algebras.
\end{abstract}

\subjclass[2000]{Primary 16E65, 16E10}

%16B50 (1991-now) Category-theoretic methods and results
%16D90 (1991-now) Module categories [See also 16Gxx, 16S90];
%module theory in a category-theoretic context; Morita equivalence and duality
%16E10 (1991-now) Homological dimension
%16E35 (2010-now) Derived categories
%16E65 (2000-now) Homological conditions on rings (generalizations of
%regular, Gorenstein, Cohen-Macaulay rings, etc.)

\keywords{Auslander theorem, Cohen-Macaulay property, Artin-Schelter
regular algebra, pertinency, homologically small, Hopf algebra action}

%\thanks{ }

\maketitle

%\tableofcontents

\setcounter{section}{-1}
\section{Introduction}
\label{xxsec0}

The Auslander theorem as stated in the abstract (see also \cite{Au1, Au2} 
or \cite[Theorem 4.2]{IT}) 
%states that,
%{\it if $G$ is a finite small subgroup of ${\rm{GL}}({\mathbb C}^{\oplus n})$ acting
%on $R:={\mathbb C}[x_1,\cdots,x_n]$ naturally, then there is a natural
%isomorphism of algebras}
%\begin{equation}
%\label{E0.0.1}\tag{E0.0.1}
%R\ast G\cong\End_{R^G}(R).
%\end{equation}
%The Auslander theorem
is a fundamental result in the study of the McKay correspondence, isolated
singularities, and other homological aspects of commutative algebras.
Recently, several researchers or research groups have been interested in
the Auslander theorem in the noncommutative setting, see
\cite{CKWZ1, HVZ, Ki, Mor, MU, Ue}. Here is a partial list of results concerning
noncommutative versions of the Auslander theorem during the last few years:
\begin{enumerate}
\item[(1)]
Mori-Ueyama proved a version of the Auslander theorem when
the fixed subring has graded isolated singularities \cite{MU}.
\item[(2)]
Van Oystaeyen-Zhang and the second-named author proved a version
the Auslander theorem for $H^*$-dense Galois extensions \cite{HVZ}.
\item[(3)]
Chan-Kirkman-Walton and the third-named author proved a version
of the Auslander theorem for Hopf actions on Artin-Schelter
regular algebras of global dimension two with trivial
homological determinant \cite{CKWZ1}.
\end{enumerate}

More generally the authors proved some results concerning the
Auslander theorem that extends the results in (1,2,3) above,
see \cite{BHZ}.

This paper is a sequel to \cite{BHZ}. We freely use terminologies
introduced in \cite{BHZ}. Our  motivation is to understand
noncommutative McKay correspondence where
one of  important ingredients is the Auslander theorem. Here we apply
a main result of \cite{BHZ} to establish the Auslander
theorem for several different classes of noncommutative algebras.
This research is related to noncommutative algebraic geometry,
noncommutative invariant theory and representation theory of
noncommutative algebras.

The first family of algebras that we are interested in are
noetherian local or connected graded algebras that satisfy a
{\it polynomial identity} (abbreviated as {\it PI}). We
need to recall some definitions before stating the results.

Let $A$ be an algebra over a base field $\Bbbk$ and $H$ a nontrivial
finite dimensional semisimple Hopf algebra acting on $A$ inner faithfully
\cite[Definition 3.9]{BHZ}.  Let $A\# H$ denote
the smash product \cite[Definition 4.1.3]{Mon}. Note that both $A$ and
$H$ are subalgebras of $A\# H$. Since $H$ is semisimple, the left and
the right integrals of $H$ coincide, and is denoted by $\inth$.

Let $\partial$ be a dimension function defined on the right $A$-modules
(or on finitely generated right $A$-modules) in the sense of \cite[6.8.4]{MR}.
In this paper $\partial$ is either the Gelfand-Kirillov
dimension (denoted by $\GKdim$) -- see \cite{KL} and
\cite[Definition 1.1]{BHZ}, or the Krull dimension (denoted by $\Kdim$)
-- see \cite[Chapter 6]{MR}.
Ideas in this paper should apply to other dimension functions.

\begin{definition}
\label{xxdef0.1} Let $\partial$ be a dimension function on
right $A$-modules.
\begin{enumerate}
\item[(1)] \cite[Definition 0.1]{BHZ}
Suppose that $\partial(A)<\infty$. The {\it pertinency}
of the $H$-action on $A$ with respect to $\partial$ is defined to be
$$\Pty_{\partial}(A,H):=\partial(A)-\partial ((A\# H)/I)$$
where $I$ is the 2-sided ideal of $A\# H$ generated by
$1\# \inth$. If $\partial=\GKdim$, then $\Pty_{\partial}(A,H)$
is denoted by $\Pty(A,H)$.
\item[(2)] \cite[Definition 0.2(1)]{BHZ}
The {\it grade} of a right $A$-module $M$ is defined to be
$$j(M) := \min\{i \mid  \Ext^i_A(M,A)\neq 0\}$$
If $\Ext^i_A(M,A)=0$ for all $i$, then we say $j(M)=\infty$.
\item[(3)]\cite[Definition 0.2(2)]{BHZ}
If $j((A\# H)/I)\geq 2$ where $(A\# H)/I$ is viewed as a right
$A$-module, we say the $H$-action on $A$ is
{\it homologically small} or {\it h.small}.
\end{enumerate}
\end{definition}

Pertinency is an essential invariant
introduced in \cite{BHZ}. It is an invariant of the $H$-action on
$A$, not just the pair $(A,H)$.
Recall that, for a finite dimensional
$\Bbbk$-vector space $V$, a finite subgroup
$G\subseteq {\rm{GL}}(V)$ is called {\it small}
if $G$ does not contain a pseudo-reflection.
When $A$ is the polynomial ring and $H$ is a group algebra, then
homological smallness is equivalent to smallness
by Lemma \ref{xxlem7.2}.

Now we are ready to state our first result concerning the Auslander theorem.

\begin{theorem}
\label{xxthm0.2}
Let $R$ be a noetherian, PI local {\rm{(}}or connected
graded{\rm{)}} algebra of finite injective dimension $\geq 2$
and $H$ a semisimple Hopf algebra acting on $R$.
Then the following are equivalent:
\begin{enumerate}
\item[(1)]
There is a natural isomorphism of algebras $R\# H\cong \End_{R^H}(R)$.
\item[(2)]
$\Pty_{\Kdim}(R,H)\geq 2$.
\item[(3)]
The $H$-action on $R$ is homologically small.
\end{enumerate}
\end{theorem}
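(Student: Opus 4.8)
The plan is to treat condition (3)---homological smallness, which is the only intrinsically homological (and dimension-function-independent) condition among the three---as the pivot, and to establish the two equivalences $(2)\Leftrightarrow(3)$ and $(1)\Leftrightarrow(3)$ separately rather than arguing cyclically. The equivalence $(2)\Leftrightarrow(3)$ will be a purely homological translation between the grade $j((R\# H)/I)$ and the codimension $\Kdim(R)-\Kdim((R\# H)/I)=\Pty_{\Kdim}(R,H)$, carried out via a Cohen--Macaulay duality; the equivalence $(1)\Leftrightarrow(3)$ will be an application of the main result of \cite{BHZ} characterizing when the canonical algebra map $R\# H\to\End_{R^H}(R)$ is an isomorphism.

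First I would record the homological framework attached to $R$. Since $R$ is noetherian PI (local or connected graded) of finite injective dimension, I would invoke the structure theory for such rings to conclude that $R$ is Auslander--Gorenstein and Cohen--Macaulay with respect to Krull dimension; concretely, for every nonzero finitely generated right $R$-module $M$ one has the grade--dimension equality
\[
j(M)+\Kdim(M)=\Kdim(R).
\]
The hypothesis that the injective dimension is $\geq 2$ guarantees $\Kdim(R)\geq 2$, so that a codimension-$2$ condition is not vacuous. I would also use that $R$ is a finitely generated module over the invariant subring $R^H$---which holds because $H$ is finite dimensional semisimple and $R$ is noetherian---so that $R_{R^H}$ is finitely generated, $\End_{R^H}(R)$ is a noetherian ring, and $(R\# H)/I$ is a finitely generated right $R$-module (being a quotient of $R\# H$, which is finite over $R$) to which the displayed equality applies.

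Granting this, $(2)\Leftrightarrow(3)$ is immediate: applying the grade--dimension equality to $M=(R\# H)/I$ yields
\[
j\bigl((R\# H)/I\bigr)=\Kdim(R)-\Kdim\bigl((R\# H)/I\bigr)=\Pty_{\Kdim}(R,H),
\]
so that $j((R\# H)/I)\geq 2$ holds precisely when $\Pty_{\Kdim}(R,H)\geq 2$. (In the connected graded case one may additionally note $\GKdim=\Kdim$ for finitely generated PI modules, matching the two pertinency conventions.)

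For $(1)\Leftrightarrow(3)$ I would appeal to the main theorem of \cite{BHZ}, which under the present homological hypotheses equates homological smallness of the action with the canonical map $R\# H\to\End_{R^H}(R)$ being an isomorphism. The substance here is to verify that $R$ belongs to the class to which that theorem applies: one checks injectivity of the canonical map from inner faithfulness of the action together with semisimplicity of $H$, and then that its cokernel---supported on the locus measured by the ideal $I$---vanishes exactly when $j((R\# H)/I)\geq 2$, via the usual reflexivity/extension argument in codimension $\geq 2$. I expect the \textbf{main obstacle} to be the homological bookkeeping in the PI-\emph{local} case, where the graded machinery and the coincidence $\GKdim=\Kdim$ available in the connected graded setting are not directly at hand; there one must secure the Cohen--Macaulay property and run the reflexivity argument underlying surjectivity while working with Krull dimension throughout, which is precisely the reason condition (2) is phrased via $\Kdim$ rather than $\GKdim$.
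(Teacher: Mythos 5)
Your high-level architecture does coincide with the paper's: the $\Kdim$-Cohen--Macaulay property of $R$ is extracted from the Stafford--Zhang structure theory for noetherian PI local or connected graded rings of finite injective dimension (Lemma \ref{xxlem3.2}), the equivalence $(2)\Leftrightarrow(3)$ is the grade--dimension equality applied to $(R\#H)/I$, and $(1)$ is linked to the others through the machinery of \cite{BHZ}. In that sense the skeleton is the same as the paper's reduction of Theorem \ref{xxthm0.2} to Theorem \ref{xxthm3.3}.

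The genuine gap is in how you invoke \cite{BHZ}. The result there that gives the equivalence in one stroke ([BHZ, Theorem 3.5], restated as Theorem \ref{xxthm2.7}) is proved for \emph{locally finite graded} algebras over a field with $\partial=\GKdim$; it covers neither the PI \emph{local} case nor the dimension function $\Kdim$, so it cannot be cited as a black box here. One must instead apply the abstract theorem [BHZ, Theorem 2.4] (Theorem \ref{xxthm2.3}), and its hypotheses are exactly where the work lies: Hypothesis \ref{xxhyp2.1} --- in particular that $B:=R\#H$ is itself $\Kdim$-CM (Proposition \ref{xxpro2.5}) and that the Krull dimension of a f.g.\ $B$-module is the same whether computed over $B$ or over $R$ --- together with the Tor bound \eqref{E2.3.1}, i.e.\ $\gamma_{d-2,1}(eB)$ for $\partial=\Kdim$. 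The paper's Section 3 exists precisely to supply these: Lemma \ref{xxlem3.1} proves that over noetherian PI rings $\Kdim$ is $B/A$-hereditary and $(C,B)_\infty$-torsitive, by induction on Krull dimension using critical composition series, essential embeddings into prime factor rings, and the symmetry of $\Kdim$ for PI bimodules (Lemma \ref{xxlem1.3}(2)). Your sketch replaces all of this with a ``reflexivity/extension argument in codimension $\geq 2$'' and an injectivity claim derived from inner faithfulness; neither is the operative mechanism (inner faithfulness is not even a hypothesis of the theorem --- when it fails, all three conditions simply fail together), and nothing in your proposal produces the torsitivity/hereditariness of $\Kdim$ that the application of [BHZ, Theorem 2.4] actually requires. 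Note also that hereditariness is needed even to splice your two equivalences together: condition (iii) of Theorem \ref{xxthm2.3} measures the dimension of $B/(BeB)$ as a right $B$-module, while your grade computation in $(2)\Leftrightarrow(3)$ is carried out over $R$, so without Lemma \ref{xxlem3.1}(1) the two quantities you are comparing live over different rings.
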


In the above theorem, even if $A$ is graded, we do not assume that the
$H$-action is homogeneous. When $R$ is the commutative
polynomial ring $\Bbbk[V]$ and $G$ acts on a $\Bbbk$-vector
space $V$ inner faithfully, the above theorem agrees with the original
Auslander theorem stated at the beginning of the paper.
One immediate question is the following.

\begin{question}
\label{xxque0.3}
Does a version of Theorem \ref{xxthm0.2} holds without the PI hypothesis?
\end{question}

Considering an $H$-action on an algebra $A$,
we say that {\it the Auslander theorem holds} if there is a natural
isomorphism of algebras $A\# H \cong \End_{A^H}(A)$. By
Theorem \ref{xxthm0.2}, the pertinency is ultimately connected
with the Auslander theorem. Under some reasonable hypotheses
(such as $A$ being Cohen-Macaulay, Artin-Schelter regular, and
so on), $\Pty(A,H)=0$ if and only if the $H$-action on $A$
is not inner-faithful. Equivalently, $\Pty(A,H)\geq 1$ if and
only if the $H$-action on $A$ is inner-faithful. It follows
from the Auslander theorem that $\Pty(A,H)\geq 2$ if and
only if $A\# H$ is naturally isomorphic to $\End_{A^H}(A)$.
When $\Pty(A,H)$ is maximal possible, namely,
$\Pty(A,H)=\GKdim A$, the fixed ring $A^H$ has isolated
singularities \cite{BHZ}. In general, $\Pty(A,H)$
controls the dimension of the singularities of $A^H$,
see \cite[(E0.5.1)]{BHZ}. Therefore the pertinency is
an important invariant related to several properties
of the $H$-action on $A$.

For different classes of algebras, proofs of the Auslander theorem
are different and sometimes require different technologies.
This is a main reason why we  have  different hypotheses
in different theorems.

By using the idea of mod-$p$ reduction, we are able to prove a
version of Theorem \ref{xxthm0.2} for a class of
non-local noncommutative algebras.

\begin{theorem}
\label{xxthm0.4} Suppose $\rm{char}\; \Bbbk=0$.
Let $R$ be the universal enveloping algebra
$U(\mathfrak g)$ of a finite dimensional
Lie algebra ${\mathfrak g}$. Suppose $G$ is a
finite small subgroup of $\Aut_{Lie}(\mathfrak g)$.
Then there is a natural isomorphism of algebras
$R\ast G\cong \End_{R^G}(R)$.
\end{theorem}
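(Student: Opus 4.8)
The strategy is to pass to positive characteristic, where the enveloping algebra acquires a polynomial identity, to establish the Auslander isomorphism there, and then to lift it back. Write $n=\dim_{\Bbbk}\mathfrak g$. Through the PBW filtration, whose associated graded ring is the polynomial ring $S(\mathfrak g)$, the algebra $R=U(\mathfrak g)$ is noetherian, Auslander regular and Cohen--Macaulay of injective dimension $n$, in every characteristic. If $n\leq 1$, any nonidentity $g\in G\subseteq\Aut_{Lie}(\mathfrak g)$ scales the line $\mathfrak g$ by a scalar $\neq 1$ and is therefore a pseudo-reflection, so smallness forces $G=\{1\}$ and there is nothing to prove; hence we assume $n\geq 2$. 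The natural Auslander map $\phi\colon R\ast G\to\End_{R^{G}}(R)$, $r g\mapsto(s\mapsto r\, g(s))$, is defined over any base, and our goal is to prove that $\phi$ is bijective. The obstruction to invoking Theorem \ref{xxthm0.2} directly is that $U(\mathfrak g)$ is not PI in characteristic $0$ unless $\mathfrak g$ is abelian; mod-$p$ reduction removes it.

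First I would fix an arithmetic model. Choose a finitely generated $\Z$-subalgebra $\Lambda\subseteq\Bbbk$ over which the structure constants of $\mathfrak g$ and the matrix entries of all $g\in G$ are defined, yielding a free $\Lambda$-Lie algebra $\mathfrak g_{\Lambda}$ of rank $n$ carrying a $G$-action with $\mathfrak g=\mathfrak g_{\Lambda}\otimes_{\Lambda}\Bbbk$; after inverting $|G|$ and finitely many further elements I may assume that $U_{\Lambda}:=U(\mathfrak g_{\Lambda})$ is $\Lambda$-free with a PBW basis and that $R=U_{\Lambda}\otimes_{\Lambda}\Bbbk$. For a maximal ideal $\mathfrak m\subset\Lambda$ with residue field $\Bbbk_{\mathfrak m}$ of characteristic $p$, set $\mathfrak g_{\mathfrak m}=\mathfrak g_{\Lambda}\otimes\Bbbk_{\mathfrak m}$ and $U_{\mathfrak m}=U(\mathfrak g_{\mathfrak m})$. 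In characteristic $p$ the operators $\operatorname{ad}(x)^{p^{k}}$ on the finite-dimensional space $\mathfrak g_{\mathfrak m}$ are eventually dependent, so each $x\in\mathfrak g_{\mathfrak m}$ admits an additive $p$-polynomial $\pi_{x}=\sum_{k}c_{k}t^{p^{k}}$ with $\pi_{x}(\operatorname{ad} x)=0$; then $\operatorname{ad}(\pi_{x}(x))=\sum_{k}c_{k}\operatorname{ad}(x)^{p^{k}}=\pi_{x}(\operatorname{ad} x)=0$, so $\pi_{x}(x)$ is central, and the central elements arising from a basis generate a polynomial subalgebra over which $U_{\mathfrak m}$ is module-finite. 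Hence $U_{\mathfrak m}$ is noetherian PI, and (again via its PBW filtration) Auslander regular and Cohen--Macaulay of injective dimension $n$. The conditions $p\nmid|G|$ (making $\Bbbk_{\mathfrak m}G$ semisimple) and $\operatorname{rank}_{\Bbbk_{\mathfrak m}}(g-1)=\operatorname{rank}_{\Bbbk}(g-1)$ for all $g\neq 1$ (keeping $G$ small on $\mathfrak g_{\mathfrak m}$) each exclude only finitely many $\mathfrak m$, hence hold on a dense open subset of $\operatorname{Spec}\Lambda$.

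Next I would bound the pertinency in characteristic $p$. The PBW filtration of $U_{\mathfrak m}$ is $G$-stable, with $\gr U_{\mathfrak m}=S(\mathfrak g_{\mathfrak m})$ carrying the linear $G$-action; placing $G$ in degree $0$, the averaging idempotent $e=\frac{1}{|G|}\sum_{g\in G}g$ generates the ideal $I\subset U_{\mathfrak m}\ast G$ defining the pertinency, and its symbol generates the corresponding ideal $I_{\gr}\subset S(\mathfrak g_{\mathfrak m})\ast G=\gr(U_{\mathfrak m}\ast G)$. Since the GK-dimension of a filtered module is at most that of its associated graded, and since $\gr\big((U_{\mathfrak m}\ast G)/I\big)$ is a quotient of $(S(\mathfrak g_{\mathfrak m})\ast G)/I_{\gr}$, we obtain $\GKdim\big((U_{\mathfrak m}\ast G)/I\big)\leq\GKdim\big((S(\mathfrak g_{\mathfrak m})\ast G)/I_{\gr}\big)$, while both $U_{\mathfrak m}$ and $S(\mathfrak g_{\mathfrak m})$ have GK-dimension $n$; hence $\Pty(U_{\mathfrak m},G)\geq\Pty(S(\mathfrak g_{\mathfrak m}),G)$. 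For the polynomial ring $S(\mathfrak g_{\mathfrak m})$ the right-hand side equals $n-\max_{g\neq 1}\dim(\mathfrak g_{\mathfrak m})^{g}$, the minimal codimension of the fixed spaces $(\mathfrak g_{\mathfrak m})^{g}$ over $g\neq 1$, which smallness forces to be $\geq 2$ (the commutative computation underlying Lemma \ref{xxlem7.2}). Therefore $\Pty(U_{\mathfrak m},G)\geq 2$; by the Cohen--Macaulay property this is homological smallness, and the affine PI form of the implication $(3)\Rightarrow(1)$ in Theorem \ref{xxthm0.2} yields a natural isomorphism $\phi_{\mathfrak m}\colon U_{\mathfrak m}\ast G\xrightarrow{\ \sim\ }\End_{U_{\mathfrak m}^{G}}(U_{\mathfrak m})$ for almost all $\mathfrak m$.

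Finally I would descend to characteristic $0$. After inverting finitely many more elements of $\Lambda$ (generic freeness), the modules $U_{\Lambda}$, $U_{\Lambda}^{G}=e\cdot U_{\Lambda}$, and each PBW-degree piece of $\ker\phi_{\Lambda}$ and $\operatorname{coker}\phi_{\Lambda}$ become $\Lambda$-free, so the formation of $G$-invariants (using that $|G|$ is invertible) and of $\End_{U_{\Lambda}^{G}}(U_{\Lambda})$ commutes with every specialization $\Lambda\to\Bbbk_{\mathfrak m}$ and with $\Lambda\to\Bbbk$. Consequently $\phi_{\mathfrak m}=\phi_{\Lambda}\otimes_{\Lambda}\Bbbk_{\mathfrak m}$ is the genuine Auslander map of $U_{\mathfrak m}$, an isomorphism for almost all $\mathfrak m$. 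Filtering $\phi_{\Lambda}$ by PBW degree, each graded piece of $\ker\phi_{\Lambda}$ and $\operatorname{coker}\phi_{\Lambda}$ is a finitely generated $\Lambda$-module whose reduction vanishes at almost every closed point of $\operatorname{Spec}\Lambda$; such a module is torsion, and since $\operatorname{Frac}(\Lambda)\subseteq\Bbbk$ it dies after $\otimes_{\Lambda}\Bbbk$. Hence $\phi=\phi_{\Lambda}\otimes_{\Lambda}\Bbbk$ is an isomorphism, which is the assertion. The step I expect to be the main obstacle is precisely this descent: arranging, uniformly in $\mathfrak m$, that the formation of invariants and of $\End_{R^{G}}(R)$ is compatible with reduction (so that $\phi_{\mathfrak m}$ really is the fiber of $\phi_{\Lambda}$), together with the persistence of smallness and of semisimplicity; the filtered GK-dimension inequality and the Cohen--Macaulay identification of grade with codimension are the remaining technical points.
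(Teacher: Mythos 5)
Your strategy is sound and genuinely different from the paper's. Both arguments reduce mod $p$ and both exploit the PBW filtration together with the commutative Auslander theorem for $\Bbbk[V]$, but what gets reduced differs. The paper proves the pertinency bound $\Pty(R,G)\geq\Pty(\gr_{\mathcal F}R,G)\geq 2$ entirely in characteristic $0$ (Proposition \ref{xxpro3.6} and Lemma \ref{xxlem4.12} applied over $\Bbbk$ itself), and uses mod-$p$ reduction only to verify the Tor-dimension hypothesis $\gamma_{d-2,1}(eB)$ of Theorem \ref{xxthm2.3} (this is Theorem \ref{xxthm4.9}, via congeniality and $\GKdim$-stability of free orders); the equivalence between pertinency and the endomorphism-ring isomorphism is then invoked over $\Bbbk$, so no isomorphism is ever transported across characteristics --- only dimension inequalities, which specialize painlessly for $D$-free orders. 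You instead prove $\Pty(U_{\mathfrak m},G)\geq 2$ in characteristic $p$, apply the PI Auslander theorem there (your ``affine PI form of Theorem \ref{xxthm0.2}'' is exactly Theorem \ref{xxthm3.3}; it applies since $U_{\mathfrak m}$ is affine noetherian PI and $\Kdim$-CM, even though it is neither local nor connected graded), and then descend the isomorphism itself. Your route yields the characteristic-$p$ statement (for $p\nmid|G|$) as a byproduct, which the paper's argument does not directly give; note also that the exact fixed-space formula you quote for $\Pty(S(\mathfrak g_{\mathfrak m}),G)$ is more than you need --- the implication from smallness in Lemma \ref{xxlem4.12}, valid in the non-modular case, already gives the inequality.

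The one soft spot is the step you flag yourself: freeness of $U_\Lambda$, $U_\Lambda^G$ and of the kernel/cokernel pieces alone does \emph{not} make $\End_{U_\Lambda^G}(U_\Lambda)$ commute with the non-flat specializations $\Lambda\to\Bbbk_{\mathfrak m}$. What is actually needed is to spread out a finite presentation $(U_\Lambda^G)^{\oplus m}\to(U_\Lambda^G)^{\oplus n}\to U_\Lambda\to 0$ (this specializes to a presentation automatically, by right-exactness of tensor and $U_{\mathfrak m}^G=eU_{\mathfrak m}$), identify $\End_{U_\Lambda^G}(U_\Lambda)$ with $\ker\bigl(U_\Lambda^{\oplus n}\to U_\Lambda^{\oplus m}\bigr)$, and make the image and cokernel of that dualized map --- f.g.\ left $U_\Lambda$-modules --- generically $\Lambda$-free, so that forming this kernel commutes with $\otimes_\Lambda\Bbbk_{\mathfrak m}$; the same must be done for $\operatorname{im}\phi_\Lambda$ before the vanishing of $\ker\phi_{\mathfrak m}$ and $\operatorname{coker}\phi_{\mathfrak m}$ can be pulled back to $\ker\phi_\Lambda\otimes\Bbbk_{\mathfrak m}$ and $\operatorname{coker}\phi_\Lambda\otimes\Bbbk_{\mathfrak m}$. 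All of this is available from generic freeness \cite{ArSZ} (the paper's Lemma \ref{xxlem4.4}(2)) applied to finitely many f.g.\ modules over the congenial algebras $U_\Lambda$ and $U_\Lambda\ast G$, so the gap is one of bookkeeping rather than substance --- but it is precisely the complication that the paper's arrangement of the argument is designed to avoid. A last wording caveat: your per-PBW-degree torsion argument must allow the inverted denominators to depend on the degree (density of good maximal ideals for each fixed degree suffices); a single localization cannot make infinitely many graded pieces free simultaneously.
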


Note that $\Aut_{Lie}({\mathfrak g})$ is the group of
Lie algebra automorphisms of ${\mathfrak g}$.
The following is an immediate consequence of Theorem
\ref{xxthm0.4}.

\begin{corollary}
\label{xxcor0.5} Suppose $\rm{char}\; \Bbbk=0$.
Let $R$ be the universal enveloping algebra
$U(\mathfrak g)$ of a finite dimensional
Lie algebra ${\mathfrak g}$. Suppose that
${\mathfrak g}\neq {\mathfrak g}'\ltimes \Bbbk x$
for a 1-dimensional Lie ideal $\Bbbk x\subseteq {\mathfrak g}$.
Then $R\ast G\cong \End_{R^G}(R)$
for every finite group
$G\subseteq \Aut_{Lie}({\mathfrak g})$.
\end{corollary}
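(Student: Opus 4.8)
The plan is to deduce Corollary \ref{xxcor0.5} directly from Theorem \ref{xxthm0.4} by showing that the hypothesis on $\mathfrak g$ forces \emph{every} finite subgroup $G\subseteq \Aut_{Lie}(\mathfrak g)$ to be small; once smallness is established, Theorem \ref{xxthm0.4} applies verbatim and yields the natural isomorphism $R\ast G\cong \End_{R^G}(R)$. Since $\Aut_{Lie}(\mathfrak g)\subseteq \mathrm{GL}(\mathfrak g)$, smallness refers to the linear action on the underlying vector space $\mathfrak g$, so it suffices to prove the contrapositive: if some $g\in\Aut_{Lie}(\mathfrak g)$ of finite order is a pseudo-reflection, then $\mathfrak g\cong \mathfrak g'\ltimes \Bbbk x$ for a $1$-dimensional Lie ideal $\Bbbk x$.

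So assume $g$ is such a pseudo-reflection, i.e. $g\neq 1$, $g$ has finite order, and $\rk(g-1)=1$. First I would record that the fixed space $H:=\ker(g-1)$ is a hyperplane and, because $g$ preserves the bracket, $g[y,z]=[gy,gz]=[y,z]$ for $y,z\in H$, so $H$ is a Lie subalgebra. Next I would identify the second eigenvalue: $g$ acts on the $1$-dimensional space $\mathfrak g/H$ by a scalar $\zeta\in\Bbbk$, which is a root of unity, and $\zeta\neq 1$, for otherwise $g-1$ would be nilpotent and a finite-order unipotent automorphism in characteristic $0$ must be the identity, contradicting $g\neq 1$. A short lifting argument, replacing a chosen $x_0\notin H$ by $x_0+(\zeta-1)^{-1}(g(x_0)-\zeta x_0)$, which is legitimate as $\zeta\neq 1$, produces $x\in\mathfrak g\setminus H$ with $g(x)=\zeta x$, giving the $\Bbbk$-rational eigenspace decomposition $\mathfrak g=H\oplus \Bbbk x$.

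The decisive step is to show $\Bbbk x$ is an ideal. For $h\in H$ one computes $g[h,x]=[gh,gx]=[h,\zeta x]=\zeta[h,x]$, so $[h,x]$ lies in the $\zeta$-eigenspace; since $\zeta\neq 1$ this eigenspace is exactly the line $\Bbbk x$, whence $[h,x]\in\Bbbk x$. Combined with $[H,H]\subseteq H$ and $[x,x]=0$ this gives $[\mathfrak g,\Bbbk x]\subseteq\Bbbk x$, so $\Bbbk x$ is a $1$-dimensional Lie ideal and $H$ is a complementary subalgebra, i.e. $\mathfrak g=H\ltimes\Bbbk x$, the forbidden shape. This proves the contrapositive and hence the corollary.

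The only genuinely delicate points, which I would treat carefully rather than the routine bracket identities, are that one works over a field $\Bbbk$ that need not be algebraically closed, so I must check that the eigenvalue $\zeta$ and the eigenvector $x$ are $\Bbbk$-rational, and that the $\zeta$-eigenspace is exactly one-dimensional, which is what pins $[h,x]$ into $\Bbbk x$ and is ultimately responsible for $\Bbbk x$ being an ideal. Both follow from $\zeta\neq 1$ together with $\dim H=\dim\mathfrak g-1$, so the main obstacle is concentrated in verifying $\zeta\neq 1$ via the finite-order-unipotent argument.
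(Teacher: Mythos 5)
Your proof is correct and takes essentially the same route as the paper: the paper isolates your contrapositive argument as Lemma \ref{xxlem4.13} (a finite-order pseudo-reflection in $\Aut_{Lie}({\mathfrak g})$ forces a decomposition ${\mathfrak g}=V\ltimes \Bbbk x$ with $\Bbbk x$ a $1$-dimensional Lie ideal) and then invokes Theorem \ref{xxthm0.4}, exactly as you do. Your write-up simply supplies the details the paper leaves as ``easy to show,'' namely the $\Bbbk$-rationality of the eigenvalue $\zeta$ and eigenvector $x$, the fact that $\zeta\neq 1$ in characteristic $0$, and the computation pinning $[H,x]$ into the $\zeta$-eigenspace $\Bbbk x$.
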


Theorem \ref{xxthm0.4} and Corollary \ref{xxcor0.5} suggest
that there should be a version of the McKay correspondence for
the universal enveloping algebra of a finite dimensional Lie
algebra, which would be a very interesting
future project. Ideas of the proof of Theorem \ref{xxthm0.4} applies
to other algebras with good filtration, see Theorem \ref{xxthm4.10}.

Down-up algebras were introduced by Benkart and Roby in \cite{BR}  as a tool
to study the structure of certain posets. Noetherian graded down-up
algebras are Artin-Schelter regular of global dimension three with two
generators \cite{KMP}. Their graded automorphism groups were computed in
\cite{KK}, and are rich enough to provide many nontrivial examples.
Some invariant theoretic aspects concerning down-up algebras have
been studied in \cite{KK, KKZ1}. We have a version of the
Auslander theorem for down-up algebras. For a graded algebra $A$,
let $\Aut_{gr}(A)$ be the group of all graded algebra automorphisms
of $A$.

\begin{theorem}
\label{xxthm0.6}
Suppose $\rm{char}\; \Bbbk=0$.
Let $R$ be a noetherian graded down-up algebra
$A(\alpha,\beta)$ generated by $V:=\Bbbk x+\Bbbk y$ -see Definition
{\rm{\ref{xxdef6.1}}}. Assume
that either $\beta\neq -1$ or $(\alpha, \beta)=(2,-1)$.
Let $G$ be any nontrivial finite subgroup of $\Aut_{gr}(R)$.
Then the following hold.
\begin{enumerate}
\item[(1)]
$\Pty(R,G)\geq 2$.
\item[(2)]
The $G$-action on $R$ is homologically small.
\item[(3)]
There is a natural isomorphism of graded algebras
$R\ast G\cong\End_{R^G}(R).$
\end{enumerate}
\end{theorem}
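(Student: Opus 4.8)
\textbf{}The plan is to prove the three assertions in the order $(1)\Leftrightarrow(2)\Rightarrow(3)$, with essentially all of the work residing in the single inequality $\GKdim\bigl((R\ast G)/I\bigr)\le 1$. First I would record the structural input. A noetherian graded down-up algebra $A(\alpha,\beta)$ (so $\beta\neq 0$) is connected graded, noetherian, and Artin--Schelter regular of global dimension three by \cite{KMP}; in particular it is Cohen--Macaulay with $\GKdim R=3$, so that every finitely generated right $R$-module $M$ satisfies the identity $j(M)=\GKdim R-\GKdim M=3-\GKdim M$. Since $(R\ast G)/I$ is finitely generated over $R$, this identity shows at once that statements (1) and (2) are each equivalent to the bound $\GKdim\bigl((R\ast G)/I\bigr)\le 1$: indeed $\Pty(R,G)=3-\GKdim\bigl((R\ast G)/I\bigr)$, while homological smallness is $j\bigl((R\ast G)/I\bigr)\ge 2$. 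Granting (1), statement (3) then follows from the main result of \cite{BHZ}, the Artin--Schelter regular analogue of the implication $(2)\Rightarrow(1)$ in Theorem \ref{xxthm0.2}: for such an algebra, pertinency at least two forces the natural Auslander map $R\ast G\to\End_{R^G}(R)$ to be an isomorphism of graded algebras.

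The heart of the matter is therefore the bound $\GKdim\bigl((R\ast G)/I\bigr)\le 1$ for every nontrivial finite $G\subseteq\Aut_{gr}(R)$. I would begin by invoking the description of $\Aut_{gr}(R)$ in \cite{KK}. Every graded automorphism is linear on $V=\Bbbk x+\Bbbk y$, so $G\hookrightarrow \mathrm{GL}(V)$; as $\ch\Bbbk=0$ and $G$ is finite, after passing to $\bar\Bbbk$ each $g\in G$ may be taken diagonalizable. A crucial contrast with the classical Auslander theorem appears here: because the down-up relations are homogeneous of bidegree $(2,1)$ and $(1,2)$ in $(x,y)$, the entire diagonal torus preserves them, so $\Aut_{gr}(R)$ genuinely \emph{contains} pseudo-reflections such as $\mathrm{diag}(\zeta,1)$. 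Thus the conclusion must hold even for groups generated by pseudo-reflections, which in the commutative setting would obstruct the Auslander property. The mechanism that rescues us is dimension counting: $\GKdim R=3$ while $\dim_\Bbbk V=2$, and the extra dimension is carried by the degree-two normal element $\Omega$ of the down-up algebra, of bidegree $(1,1)$ (for instance $\Omega=xy-yx$ in the case $(\alpha,\beta)=(2,-1)$). A pseudo-reflection fixes a line in $V$ but scales $\Omega$ by a nontrivial character, so the relevant non-free stratum---the support of $(R\ast G)/I$ as a right $R$-module---acquires the missing codimension and still has $\GKdim\le 1$.

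Concretely, I would compute $\GKdim\bigl((R\ast G)/I\bigr)$ by analysing the idempotent $e=1\ast\inth$ and the two-sided ideal $I=(R\ast G)e(R\ast G)$. Decomposing $R\ast G=\bigoplus_{g\in G}R\cdot g$ as an $R$-bimodule and using the eigenspace decomposition of each diagonalizable $g$, the estimate reduces to bounding, for each $1\neq g\in G$, the GK dimension of an explicit cyclic quotient of $R$ determined by the action of $g$ on $x$, $y$, and hence on $\Omega$. The hypothesis that either $\beta\neq -1$ or $(\alpha,\beta)=(2,-1)$ enters precisely at this point: it pins down the multiplicative structure of the root data of $t^2-\alpha t-\beta$ so that no $g\neq 1$ can fix a two-dimensional stratum, whereas in the excluded locus $\beta=-1$ with $(\alpha,\beta)\neq(2,-1)$ the normal structure degenerates and a genuine counterexample to $\Pty(R,G)\ge 2$ can arise. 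The distinguished case $(\alpha,\beta)=(2,-1)$ is the enveloping algebra of the three-dimensional Heisenberg Lie algebra, and is therefore consistent with, and may be handled by, the good-filtration arguments of Theorem \ref{xxthm0.4} and Theorem \ref{xxthm4.10}.

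The main obstacle I anticipate is exactly this last computation: carrying the GK-dimension estimate through every conjugacy type of element---scalar matrices, pseudo-reflections, and diagonal elements with two distinct non-unit eigenvalues, together with the repeated-eigenvalue degenerations---uniformly across the two-parameter family $A(\alpha,\beta)$, and verifying in each case that $\Omega$ supplies the extra codimension. Organizing this case analysis so that the single bound $\GKdim\bigl((R\ast G)/I\bigr)\le 1$ holds throughout the allowed parameter range, and isolating exactly why $\beta=-1$ with $\alpha\neq 2$ must be excluded, is where the real effort lies; once (1) is in hand, statements (2) and (3) are formal consequences of the Cohen--Macaulay identity and of \cite{BHZ}.
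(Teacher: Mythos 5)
Your reduction steps are fine: using the Cohen--Macaulay property of $R$ (so $j(M)=3-\GKdim M$) to see that (1) and (2) are both equivalent to $\GKdim\bigl((R\ast G)/I\bigr)\le 1$, and invoking Theorem \ref{xxthm2.7} for (1) $\Rightarrow$ (3), is exactly how the paper frames things. But the heart of the theorem --- actually establishing $\Pty(R,G)\ge 2$ --- is not proved in your proposal; it is only restated. Decomposing $R\ast G=\bigoplus_g R\cdot g$ and promising to bound ``the GK dimension of an explicit cyclic quotient of $R$ determined by the action of $g$'' is a plan, not an argument: you never identify which elements of $R$ lie in $I\cap Rg$, and for the cases $(\alpha,\beta)=(0,1)$ or $(2,-1)$, where $\Aut_{gr}(R)=\mathrm{GL}(\Bbbk^{\oplus 2})$ and $G$ need not be diagonalizable in a basis compatible with any presentation of $R$, it is not at all clear such a direct element-by-element estimate can be organized. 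You acknowledge this yourself (``where the real effort lies''), which is precisely the admission that the core is missing.

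The paper closes this gap by a mechanism absent from your proposal: a change of filtration. By Lemma \ref{xxlem6.3} there is a normal element $\Omega=xy-ayx$ with $g(\Omega)=\det(g)\,\Omega$ for all $g\in G$; one then filters $R$ by $F_nR=(\Bbbk\oplus\Bbbk x\oplus\Bbbk y\oplus\Bbbk\Omega)^n$, i.e.\ places $\Omega$ in degree \emph{one}. The associated graded algebra is $\gr_{\mathcal F}R\cong(\Bbbk_{a^{-1}}[x,y])[z;\sigma]$, a skew polynomial ring in three variables on which $G$ acts linearly, and the eigenvalue computation $(\xi_1,\xi_2,\xi_1\xi_2)$ shows $G$ contains no pseudo-reflection there --- this is the rigorous form of your intuition that ``$\Omega$ supplies the missing codimension.'' Proposition \ref{xxpro3.6} then transfers the bound via $\Pty(R,G)\ge\Pty(\gr_{\mathcal F}R,G)$, and the pertinency of the associated graded algebra is settled by graded twisting (Lemma \ref{xxlem5.4}(5), Theorem \ref{xxthm5.5}), which reduces each case to the commutative polynomial ring in three variables, where Lemma \ref{xxlem4.12} (the classical Auslander theorem) applies. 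Finally, your explanation of the hypothesis on $(\alpha,\beta)$ is wrong on two counts: the exclusion of $\beta=-1$, $\alpha\neq 2$ is not because ``a genuine counterexample to $\Pty(R,G)\ge 2$ can arise'' --- the paper explicitly conjectures (Conjecture \ref{xxcon0.7}) that the theorem still holds there --- but because the anti-diagonal automorphisms in that case swap the two normal elements $xy-ayx$ and $xy-byx$, so no single $G$-semi-invariant $\Omega$ exists and the filtration construction is unavailable.
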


Theorem \ref{xxthm0.6} suggests that there should be a version
of the McKay correspondence for noetherian graded down-up
algebra, which would be a good test project for understanding
the noncommutative McKay correspondence in dimension three.

It is unfortunate that we need some extra hypothesis on the
parameters $(\alpha,\beta)$ in Theorem \ref{xxthm0.6}.
Our conjecture is

\begin{conjecture}
\label{xxcon0.7}
Theorem {\rm{\ref{xxthm0.6}}} also holds in the
cases when $\beta=-1$ and $\alpha\neq 2$.
\end{conjecture}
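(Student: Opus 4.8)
The plan is to isolate what is genuinely new at $\beta=-1$, reduce the whole statement to a single Gelfand--Kirillov dimension estimate, and identify the one family of automorphisms that the existing arguments do not reach.

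First I would observe that, exactly as in the proof of Theorem~\ref{xxthm0.6}, parts (2) and (3) follow formally from part (1): a noetherian graded down-up algebra is Artin--Schelter regular of global dimension three, hence Cohen--Macaulay, so the chain ``$\Pty(R,G)\ge 2\Rightarrow$ homologically small $\Rightarrow$ Auslander isomorphism'' is supplied by the main result of \cite{BHZ}, and this step does not see the value of $\beta$. Thus the entire content of Conjecture~\ref{xxcon0.7} is the pertinency bound $\Pty(R,G)\ge 2$; writing $\mathfrak e=\frac{1}{|G|}\sum_{g\in G}g$ for the integral of $\Bbbk G$ and letting $I$ be the two-sided ideal of $R\ast G$ generated by $1\ast\mathfrak e$, and using $\GKdim R=3$, this amounts to the single inequality $\GKdim\big((R\ast G)/I\big)\le 1$.

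Second, I would organize the finite subgroups $G\subseteq\Aut_{gr}(R)$ according to \cite{KK}, which is precisely where the hypothesis enters. For $\beta=-1,\ \alpha\ne 2$ one has $\Aut_{gr}(A(\alpha,-1))=(\Bbbk^{\times})^2\rtimes\Z/2$, where the extra $\Z/2$ is the swap $\tau\colon x\leftrightarrow y$, which is an automorphism exactly because $\beta=-1$ (it sends one defining relation to the other only in this case); the unipotent automorphisms that would enlarge the group to $GL_2(\Bbbk)$ occur only when $\alpha+\beta=1$, i.e.\ at $\alpha=2$, the Heisenberg enveloping algebra $A(2,-1)$ already included in Theorem~\ref{xxthm0.6} and accessible by the methods of Theorem~\ref{xxthm0.4}. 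A diagonal subgroup $G$ is treated by the grading argument already used for $\beta\ne -1$: such a $g$ acts diagonally with root-of-unity eigenvalues on a PBW basis, each defining relation is bihomogeneous in $x$- and $y$-degree, and a Peirce decomposition with respect to $\mathfrak e$ collapses $(R\ast\langle g\rangle)/I$ onto a subalgebra of Gelfand--Kirillov dimension at most one. So the only new case is that of subgroups $G$ containing a conjugate of $\tau$.

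Third, for such $G$ the task reduces (again by the Peirce computation; for $G=\langle\tau\rangle$ one gets $(R\ast G)/I\cong R^{\tau}/(R^{\tau}_{-})^2$, where $R^{\tau}_{-}$ is the space of $\tau$-anti-invariants and $(R^{\tau}_{-})^2$ is the ideal of $R^{\tau}$ generated by products of pairs of anti-invariants) to estimates of the form $\GKdim\big(R^{\tau}/(R^{\tau}_{-})^2\big)\le 1$, which I would attack by a direct Hilbert-series computation in the Benkart--Roby basis of \cite{BR}, showing that the square of the anti-invariant subspace fills $R^{\tau}$ up to a remainder of linear growth. The hard part --- and the reason the statement is still a conjecture --- is that the order-two swap, unlike a diagonal automorphism, does \emph{not} lie on any one-parameter grading of $R$: its diagonalizing torus in the variables $x\pm y$ is a \emph{different} maximal torus of $GL_2(\Bbbk)$ and is not a subgroup of $\Aut_{gr}(R)$ once $\alpha\ne 2$, precisely because the relations fail to be homogeneous in $x-y$. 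Consequently the grading-collapse trick that dispatches the diagonal case is unavailable, and $(R^{\tau}_{-})^2$ must be controlled by hand. I expect this to be the genuine obstacle, and I would try to circumvent it either by a $G$-equivariant mod-$p$ reduction making $A(\alpha,-1)$ module-finite over its centre, so that the PI version Theorem~\ref{xxthm0.2} replaces the pertinency inequality by the more accessible grade condition $j\big((R\ast G)/I\big)\ge 2$, the bound then being lifted back to characteristic zero by semicontinuity; or by a flat $\tau$-equivariant degeneration to a monomial algebra, where the Hilbert-series bookkeeping is transparent. The delicacy is that any such reduction must be uniform in $\alpha$ and must degenerate correctly at the excluded value $\alpha=2$, which is exactly where the direct computation changes character.
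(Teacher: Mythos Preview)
The statement you are asked to prove is Conjecture~\ref{xxcon0.7}, and the paper does \emph{not} prove it; it is stated as an open problem, with the remark ``See \cite{CKZ2, GKMW} for some results related to Conjecture~\ref{xxcon0.7}.'' There is therefore no proof in the paper to compare your proposal against.

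Your analysis of \emph{why} the case $\beta=-1$, $\alpha\neq 2$ is left open is accurate and coincides with the paper's own diagnosis. Immediately after Lemma~\ref{xxlem6.3} the authors write: ``When $\beta=-1$ and $\alpha\neq 2$, and if $g$ is represented by a matrix of the form $\begin{pmatrix} 0 &a_{12}\\a_{21}&0 \end{pmatrix}$, then $g$ switches $xy-ayx$ and $xy-byx$ where $a$ and $b$ are two different roots of the character polynomial. This is the case that we can not handle.'' You have correctly located the same obstruction: the filtration argument in the proof of Theorem~\ref{xxthm0.6} depends on a single $G$-semi-invariant normal element $\Omega$, and an anti-diagonal automorphism permutes $\Omega_1$ and $\Omega_2$ rather than scaling either one. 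Your observation that diagonal subgroups of $U$ are already covered by the existing argument (Case~3 of the proof of Theorem~\ref{xxthm0.6} goes through verbatim once $g(\Omega)=\det(g)\Omega$) is also correct.

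What you have written is a coherent research outline, not a proof, and you acknowledge this yourself. The two strategies you sketch (mod-$p$ reduction to the PI setting of Theorem~\ref{xxthm0.2}; an equivariant degeneration) are reasonable, but neither is carried out, and each has a genuine gap you would have to close: for the first, one needs that $A(\alpha,-1)$ over $\mathbb F_p$ is module-finite over its centre together with a $\GKdim$-stability statement that survives the base change; for the second, one needs a degeneration that is simultaneously $\tau$-equivariant and has a congenial associated graded ring, and it is not clear such a filtration exists precisely because $\tau$ fails to stabilize the natural normal elements. Your Peirce-decomposition formula $(R\ast\langle\tau\rangle)/I\cong R^{\tau}/(R^{\tau}_{-})^{2}$ is a useful reformulation but the subsequent Hilbert-series estimate is asserted, not proved. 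In short: your reduction to the single inequality $\GKdim((R\ast G)/I)\le 1$ and your isolation of the anti-diagonal case are exactly right and match the paper; the remainder is a plausible plan of attack on what the authors themselves leave as a conjecture.
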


See \cite{CKZ2, GKMW} for some results related to Conjecture
\ref{xxcon0.7}. For completeness we work out a version of
Auslander theorem for skew polynomial rings (see Example \ref{xxex5.2}).
Let $\Bbbk^{\times}$ denote the set of units in $\Bbbk$.

\begin{theorem}
\label{xxthm0.8}
Let $R$ be the skew polynomial ring $\Bbbk_{p_{ij}}[x_1,\cdots,x_n]$
for $n\geq 2$ and $p_{ij}\in \Bbbk^{\times}$ being generic.
Let $G$ be a finite group of algebra automorphisms of $R$. Then
$G$ acts on the finite dimensional $\Bbbk$-space $V:=
\oplus_{s=1}^{n} \Bbbk x_s$ and the following are equivalent.
\begin{enumerate}
\item[(1)]
$G\subseteq {\rm{GL}}(V)$ is small.
\item[(2)]
The $G$-action on $R$ is homologically small.
\item[(3)]
There is a natural isomorphism of graded algebras
$R\ast G\cong\End_{R^G}(R).$
\end{enumerate}
\end{theorem}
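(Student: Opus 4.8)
The plan is to reduce all three conditions to the single numerical inequality $\Pty(R,G)\ge 2$, by proving the chain $(1)\Leftrightarrow[\Pty(R,G)\ge 2]\Leftrightarrow(2)$ together with $[\Pty(R,G)\ge 2]\Leftrightarrow(3)$. Before any of this I would pin down the $G$-action. The algebra $R=\Bbbk_{p_{ij}}[x_1,\dots,x_n]$ is connected graded and generated in degree one, and for generic $(p_{ij})$ its automorphisms are rigid: preservation of the relations $x_ix_j=p_{ij}x_jx_i$ forces every algebra automorphism to be graded and, after ruling out nontrivial permutations of the variables by genericity of the scalar matrix $(p_{ij})$, to act \emph{diagonally} on $V=\bigoplus_s\Bbbk x_s$. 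Thus $G$ is a finite subgroup of the diagonal torus in $\mathrm{GL}(V)$, which both justifies the assertion that $G$ acts on $V$ and arranges that each fixed space $V^g=\operatorname{span}\{x_i : g\cdot x_i=x_i\}$ is a coordinate subspace.

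Next I would record the homological input. The skew polynomial ring is noetherian, a domain, and Artin--Schelter regular of global dimension $n$, hence Auslander regular and GK-Cohen--Macaulay; consequently $j(M)+\GKdim M=n$ for every finitely generated right $R$-module $M$. Applying this to $M=(R\ast G)/I$ gives $j((R\ast G)/I)\ge 2\iff\GKdim((R\ast G)/I)\le n-2\iff\Pty(R,G)\ge 2$, which is exactly $(2)\Leftrightarrow[\Pty\ge 2]$. The equivalence $[\Pty\ge 2]\Leftrightarrow(3)$ is then the main result of \cite{BHZ} specialized to this AS-regular algebra: pertinency at least two is precisely the condition that the natural map $R\ast G\to\End_{R^G}(R)$ be an isomorphism. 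This is where the non-PI character of generic $R$ prevents us from simply quoting Theorem \ref{xxthm0.2}; instead one uses the AS-regular version behind Example \ref{xxex5.2}.

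The heart of the proof is the remaining equivalence $(1)\Leftrightarrow[\Pty\ge 2]$, which I would obtain from an explicit computation of the size of the non-Auslander module in terms of the fixed spaces, namely
\[
\GKdim\big((R\ast G)/I\big)=\max_{1\neq g\in G}\dim V^g,
\qquad\text{so}\qquad
\Pty(R,G)=\min_{1\neq g\in G}\operatorname{codim}_V V^g .
\]
Granting this, $G$ is small (no nontrivial $g$ has $\dim V^g=n-1$) exactly when every nontrivial $g$ satisfies $\operatorname{codim}_V V^g\ge 2$, i.e. exactly when $\Pty(R,G)\ge 2$. To prove the displayed formula I would exploit the $\Z^n$-grading on $R\ast G$ induced by the diagonal action: inverting all the $x_i$ produces the quantum torus, on which $G$ acts freely, so that the integral becomes a full idempotent and $(R\ast G)/I$ localizes to zero; inverting only the coordinates complementary to a given $V^g$ isolates the contribution of the stabilizer along $V^g$. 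This identifies the support of $(R\ast G)/I$ with $\bigcup_{g\neq1}V^g$ and its dimension with $\max_{g\neq1}\dim V^g$.

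The main obstacle is precisely this support computation in the noncommutative setting. In the classical Auslander theorem the equality $\GKdim((R\ast G)/I)=\max_{g\neq1}\dim V^g$ is read off from the geometry of $G$ acting on affine space together with the fact that the Auslander map is an isomorphism over the free locus; here I must replace that geometry by graded localization at the coordinate primes and by the freeness (indeed Azumaya-type behaviour) of the $G$-action on the quantum torus, which is where genericity of $(p_{ij})$ is used a second time to guarantee that the localized skew group algebra is a matrix algebra over the fixed ring. Once both containments — that $(R\ast G)/I$ is killed off the fixed loci and is genuinely supported in full dimension along each $V^g$ — are verified through these localizations, the formula, and hence the theorem, follow.
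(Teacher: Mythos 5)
Your overall skeleton is sound and partly matches the paper: reducing (2) and (3) to the single condition $\Pty(R,G)\geq 2$ is exactly what the paper does, via Theorem \ref{xxthm2.7} (i.e.\ \cite[Theorem 3.5]{BHZ}), using that $R$ is connected graded, locally finite and CM; and your remark that the generic (non-PI) case cannot be handled by Theorem \ref{xxthm0.2} is correct. The diagonality of every automorphism, which the paper takes from \cite{AlC}, is also the right starting point, though be aware that ``preservation of the relations forces every automorphism to be graded'' is itself a nontrivial theorem, not an observation: a priori an automorphism could send $x_i$ to an element of mixed degree, and this rigidity genuinely requires genericity (it fails badly when the $p_{ij}$ are roots of unity).

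The genuine gap is the remaining equivalence $(1)\Leftrightarrow \Pty(R,G)\geq 2$. You reduce it to the formula $\Pty(R,G)=\min_{1\neq g\in G}\operatorname{codim}_V V^g$, which is in fact true, but the localization argument you sketch for it is not a proof, and you have flagged rather than resolved its hard steps. Concretely: (a) the statement that $G$ ``acts freely'' on the quantum torus so that $(R\ast G)/I$ localizes to zero requires showing that finite-order diagonal automorphisms of the generic quantum torus are outer (this is where conjugation by monomials must be excluded, using genericity) and that outer actions on this simple ring make the skew group algebra simple --- none of which is carried out; (b) even granting this, vanishing of the localization only says $(R\ast G)/I$ is torsion over the Ore set of monomials, which at best yields $\Pty(R,G)\geq 1$, not $2$; (c) the crucial passage from Ore localizations at ``coordinate primes'' to the equality of $\GKdim((R\ast G)/I)$ with a support dimension has no justification in the noncommutative setting, where GK-dimension is not the dimension of a support variety --- the two containments you defer (killed off the fixed loci; full-dimensional along each $V^g$) are exactly the content of the theorem and require explicit ideal-membership computations in $I$ and explicit large subquotients. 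The paper sidesteps all of this: since $G$ is diagonal, $R$ is a Zhang twist of the commutative polynomial ring by a twisting system of diagonal automorphisms commuting with the $G$-action (Hypothesis \ref{xxhyp5.3}); pertinency is invariant under such twists because the GK-dimensions involved depend only on Hilbert series (Lemma \ref{xxlem5.4}(5)); hence $\Pty(R,G)=\Pty(\Bbbk[V],G)$, and the equivalence with smallness becomes the commutative statement, Lemma \ref{xxlem4.12}, i.e.\ the classical Auslander theorem together with Shephard--Todd--Chevalley. If you want your stronger conclusion --- the exact value of the pertinency --- it can also be obtained this way from the commutative formula via twist-invariance, or by the explicit element computations just described, but not by the localization sketch alone.
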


The Auslander theorem is one step in establishing the noncommutative
McKay correspondence. With this in place one should expect to extend
other parts of the McKay correspondence to the noncommutative world
\cite{CKWZ1, CKWZ2}.

This paper is organized as follows. We provide background material
in Section 1. In Section 2, we recall some results in \cite{BHZ}.
In Section 3, we prove Theorem \ref{xxthm0.2}. In Section 4, we
prove Theorem \ref{xxthm0.4} and Corollary \ref{xxcor0.5}.
Theorem \ref{xxthm0.8} is proven in Section 5 and Theorem \ref{xxthm0.6}
is proven in Section 6. In Section 7 we give some comments about
different definitions of smallness and argue that the homological
smallness is probably the best replacement of the classical
smallness when having Hopf algebra actions.

\subsection*{Acknowledgments}
The authors are very grateful  to Kenneth Chan, Andrew Conner,
Frank Moore, Fredy Van Oystaeyen, Chelsea Walton, Robert Won,
Yinhuo Zhang for sharing their (un-published) notes and interesting
ideas; specially to Jason Gaddis and Ellen Kirkman for pointing out
an error in an earlier version of the paper and for many useful
conversations on the subject. The authors thank the referee for 
his/her very careful reading and extremely valuable comments. 
The authors also thank
Van Nguyen for suggesting the word ``congenial'' in the
Definition \ref{xxdef4.3}.
Both Y.-H. Bao and J.-W. He were supported by NSFC
(Nos. 11571239, 11401001)
and J.J. Zhang by the US National Science
Foundation (grant No. DMS 1402863).

\section{Preliminaries}
\label{xxsec1}

Throughout let $\Bbbk$ be a base ring that is a noetherian commutative
domain. Unless otherwise stated, algebraic objects are over $\Bbbk$.
Let $A$ be a (left and right) noetherian algebra. Usually we are
working with {\it right} $A$-modules, and mostly, with finitely generated
(or f.g. for short) {\it right} $A$-modules. We write $\Mod A$ for
the category of all right $A$-modules, and $\mmod A$ for the full
subcategory of all  f.g. right $A$-modules.

In this paper we  mainly use $\GKdim$ or $\Kdim$. However,
it is a good idea to introduce some definitions for
an abstract dimension function. When $\Bbbk$ is not a field,
the GK-dimension is given as in \cite[Definition 1.1]{BHZ}
or  \cite[Lemma 3.1]{BZ}.

\begin{definition}
\label{xxdef1.1}
Let $\partial$ denote a function on f.g. right
$A$-modules
$$\partial: \mmod A\longrightarrow {\mathbb R}\cup\{\pm \infty\}.$$
%in the sense of \cite[6.8.4]{MR}.
\begin{enumerate}
\item[(1)]
We say $\partial$ is a {\it dimension function} if all f.g. $A$-modules $M$,
\begin{equation}
\label{E1.1.1}\tag{E1.1.1}
\partial(M)\geq \max\{\partial(N),\partial(M/N)\}
\end{equation}
whenever $N$ is a submodule of $M$.
\item[(2)]
We say $\partial$ is {\it exact} if for all f.g. $A$-modules $M$,
\begin{equation}
\label{E1.1.2}\tag{E1.1.2}
\partial(M)=\max\{\partial(N),\partial(M/N)\}
\end{equation}
whenever $N$ is a submodule of $M$.
\item[(3)]
Suppose $B$ is an overring of $A$ such that $B$ is noetherian
and $B_A$ is f.g. Assume that the dimension
function $\partial$ is also defined on right $B$-modules.
\begin{enumerate}
\item[(3a)]
We say $\partial$ is {\it weakly $B/A$-hereditary} if $\partial(M_A)
\leq \partial(M_B)$ for every f.g. right $B$-module $M$.
\item[(3b)]
We say $\partial$ is
{\it $B/A$-hereditary} if, for every f.g. right
$B$-module $M$,
\begin{equation}
\label{E1.1.3}\tag{E1.1.3}
\partial(M_A)=\partial(M_B).
\end{equation}
\end{enumerate}
\item[(4)]
Let $C$ be another noetherian algebra and suppose $\partial$ is also a
dimension function on {\it left} $C$-modules. We say $\partial$
is {\it $(C,A)$-symmetric} if, for
every $(C,A)$-bimodule $M$ that is f.g. over both sides, one has
\begin{equation}
\label{E1.1.4}\tag{E1.1.4}
\partial(_CM)=\partial(M_A).
\end{equation}
\item[(5)]
Let $D$ be a noetherian algebra and suppose that the dimension
function $\partial$ is also defined  on {\it right} $D$-modules.
We say $\partial$ is {\it $(A,D)_i$-torsitive} if, for every $(A,D)$-bimodule
$M$ f.g. on both sides and every f.g. right $A$-module $N$, one has
\begin{equation}
\label{E1.1.5}\tag{E1.1.5}
\partial(\Tor^A_j(N,M)_D)\leq \partial (N_A)
\end{equation}
for all $j\leq i$.
\end{enumerate}
\end{definition}

The definition of a dimension function given in
\cite[6.8.4]{MR} is stronger than the definition in
Definition \ref{xxdef1.1}(1).
The word ``torsitive'' stands for ``Tor transitive''.
We collect some facts about $\GKdim$ and $\Kdim$.
If $M$ is a f.g. graded right module over a
noetherian  locally finite graded algebra $B$,
then its GK-dimension can be computed by \cite[(E7)]{Zh1}
\begin{equation}
\label{E1.1.6}\tag{E1.1.6}
\GKdim M
=\underset{k\to \infty}{\overline{\lim}}\log_k\sum_{j\leq k}\dim (M_j).
\end{equation}

Unless otherwise stated, a graded algebra in this paper means
${\mathbb N}$-graded.
Let $B$ be a filtered algebra with an ${\mathbb N}$-filtration
${\mathcal F}$:
$$0\subseteq F_0 B\subseteq F_1B\subseteq\cdots \subseteq
F_nB\subseteq\cdots.$$
The {\it associated graded ring} is defined to be
$$\gr_{\mathcal F} B:=\bigoplus_{i=0}^{\infty} F_i B/F_{i-1} B.$$
where $F_{-1}B=0$.
We say ${\mathcal F}$ is {\it exhaustive} if $B=\bigcup_{i}
F_i B$. In this paper all filtrations are exhaustive.

\begin{lemma}
\label{xxlem1.2}
Let $A$ and $B$ be noetherian algebras.
\begin{enumerate}
\item[(1)]
If $A$ is an ${\mathbb N}$-filtered algebra such that the
associated graded algebra is locally finite and noetherian,
then $\GKdim$ is exact.
\item[(2)] If $B$ is an overring of $A$, then
$\GKdim$ is weakly $B/A$-hereditary.
\item[(3)]
Let $A$ be a subring of $B$.
Suppose that $B$ is ${\mathbb N}$-filtered such that
$\gr_{\mathcal F} B$ is noetherian and locally finite graded and
that $\gr_{\mathcal F} A$ induced by the filtration on $B$ is a
noetherian and locally finite graded subalgebra of $gr_{\mathcal F} B$.
If modules $B_A$ and $(\gr_{\mathcal F} B)_{\gr_{\mathcal F} A}$ are f.g., then
$\GKdim$ is $B/A$-hereditary.
\item[(4)]
$\GKdim$ is $(A, B)$-symmetric.
\item[(5)]
$\GKdim$ is $(A,B)_0$-torsitive.
\item[(6)]
\cite[Lemma 1.6]{BHZ}
Let $A$ and $B$ be noetherian and locally finite graded. Then
$\GKdim$ is $(A,B)_{\infty}$-torsitive in the graded setting.
\end{enumerate}
\end{lemma}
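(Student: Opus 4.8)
The plan is to treat all six assertions through the growth (or ``frame'') description of $\GKdim$, reducing the structural statements about filtered algebras to the locally finite graded case, where $\GKdim$ is computed by the Hilbert-series formula \eqref{E1.1.6}. Throughout one must keep in mind that $\Bbbk$ is only a noetherian commutative domain, so ``$\dim$'' should be read as the $\Bbbk$-rank appearing in \cite[Definition 1.1]{BHZ}; all the growth estimates below are insensitive to this, and if preferred one may pass to $\Frac(\Bbbk)$. Part (6) requires nothing here, as it is quoted verbatim from \cite[Lemma 1.6]{BHZ}.

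For (2) the inequality is immediate from the definition: choosing a frame $V$ of $B$ containing a frame $U$ of $A$, one has $WU_{\le n}\subseteq WV_{\le n}$ for every finite-dimensional $W\subseteq M$, and taking $\limsup_n\log_n$ and then the supremum over $W$ gives $\GKdim M_A\le\GKdim M_B$. For (1), I would place a good filtration on a f.g. module $M$ so that $\gr M$ is f.g. over the locally finite noetherian algebra $\gr_{\mathcal F}A$; the standard comparison \cite{KL} then gives $\GKdim M=\GKdim\gr M$, while a submodule $N\subseteq M$ inherits a good filtration with $0\to\gr N\to\gr M\to\gr(M/N)\to 0$ exact. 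Since \eqref{E1.1.6} computes $\GKdim$ from the additive sequence $\dim(\gr M)_j=\dim(\gr N)_j+\dim\gr(M/N)_j$, exactness in the graded case is clear and descends to the filtered case.

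Assertions (4) and (5) are then pure frame manipulations exploiting finiteness on both sides. For (4), I would fix a single finite-dimensional $W_0$ generating the bimodule $M$ on each side; because $M=W_0B$ on the right, the finite-dimensional space $UW_0$ lies in some $W_0V_{\le c}$, and iterating yields $U_{\le n}W_0\subseteq W_0V_{\le nc}$. Hence the left growth is dominated by the right growth, and the symmetric argument gives $\GKdim({}_AM)=\GKdim(M_B)$. For (5), with $\Tor^A_0(N,M)=N\otimes_A M$, the space $W_N\otimes W_M$ (with $W_N$ generating $N_A$ and $W_M$ generating $M$ on both sides) generates $(N\otimes_A M)_B$; using that $M$ is f.g. on the left one gets $W_MV_{\le n}\subseteq U_{\le cn}W_M$, and then moving the $A$-action across the tensor product, $(W_N\otimes W_M)V_{\le n}\subseteq(W_NU_{\le cn})\otimes W_M$. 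Thus $\dim((W_N\otimes W_M)V_{\le n})\le\dim(W_NU_{\le cn})\cdot\dim W_M$, and passing to $\limsup_n\log_n$ gives $\GKdim(N\otimes_A M)_B\le\GKdim N_A$.

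The real work is (3), and it is where the associated-graded hypotheses are unavoidable: the naive attempt to bound the $B$-frame growth of $M$ by its $A$-frame growth breaks down, because repeatedly pushing $A$-elements past a finite $A$-module generating set of $B$ amplifies the frame degree geometrically rather than linearly. My plan is to bypass this. Part (2) already gives $\GKdim M_A\le\GKdim M_B$, so only the reverse inequality is at issue. I would lift $\mathcal F$ to a good filtration on the f.g. right $B$-module $M$, so that $\gr M$ is f.g. over $\gr_{\mathcal F}B$; since $(\gr_{\mathcal F}B)_{\gr_{\mathcal F}A}$ is f.g., $\gr M$ is also f.g. over $\gr_{\mathcal F}A$ and the induced filtration is a good $A$-module filtration. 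Invoking the comparison $\GKdim M_B=\GKdim(\gr M)_{\gr_{\mathcal F}B}$ and $\GKdim M_A=\GKdim(\gr M)_{\gr_{\mathcal F}A}$, which rests on (1), I would finish by observing that the decisive feature of \eqref{E1.1.6} is that it reads off $\GKdim$ from the single sequence $k\mapsto\sum_{j\le k}\dim(\gr M)_j$, which is intrinsic to the graded vector space $\gr M$ and does not refer to which of $\gr_{\mathcal F}A$ or $\gr_{\mathcal F}B$ acts. Hence $\GKdim(\gr M)_{\gr_{\mathcal F}A}=\GKdim(\gr M)_{\gr_{\mathcal F}B}$, giving equality in (3). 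The main obstacle throughout is keeping the frame-degree bounds \emph{linear}: the graded reduction is what guarantees this in (3), while in (4) and (5) linearity is secured directly by the two-sided finiteness of $M$.
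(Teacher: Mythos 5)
Your proposal is correct and takes essentially the same route as the paper: its proof of (3) — the filtration $F_iM=VF_iB$, finite generation of $\gr_{\mathcal F} M$ over both $\gr_{\mathcal F}B$ and $\gr_{\mathcal F}A$, the comparison $\GKdim M=\GKdim \gr_{\mathcal F} M$, and the observation that \eqref{E1.1.6} only sees the underlying graded vector space — is exactly the paper's argument, and your handling of the non-field base by passing to $\Frac(\Bbbk)$ matches the paper's localization step. The only cosmetic difference is that for (1), (4) and (5) the paper simply cites Krause--Lenagan (Theorem 6.14, Lemma 5.3, Proposition 5.6), whereas you reprove those facts by the standard frame/growth arguments that underlie them.
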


\begin{proof} (1) We can pass to the case when $\Bbbk$ is a field.
Then the assertion follows from \cite[Theorem 6.14]{KL}.

(2) This is a consequence of the definition of
$\GKdim$.

(3) By localization we pass to the case when $\Bbbk$ is a field.
Let $M$ be a f.g. right $B$-module. There is a finite
dimensional subspace $V\subseteq M$ such that $M=VA=VB$.
Let ${\mathcal F}_B:=\{F_i B\}_{i\geq 0}$ be the
filtration on $B$ and let
${\mathcal F}_A:=\{F_i A:= F_i B\cap A\}_{i\geq 0}$ be the
induced filtration on $A$. Define a filtration ${\mathcal F}_M$ on
$M$ by $F_i M:=V F_i B$ for all $i$. Then ${\mathcal F}_M$ is an
${\mathcal F}_B$-filtration and $\gr_{\mathcal F} M$ is a
f.g. right $\gr_{\mathcal F} B$-module. By \cite[Proposition 6.6]{KL},
$\GKdim M_B=\GKdim (\gr_{\mathcal F} M)_{\gr_{\mathcal F} B}$.
Note that ${\mathcal F}_M$ is also an
${\mathcal F}_A$-filtration and $\gr_{\mathcal F} M$ is a f.g.
right $\gr_{\mathcal F} A$-module.
Since $(\gr_{\mathcal F} M)_{\gr_{\mathcal F} B}=
(\gr_{\mathcal F} M)_{\gr_{\mathcal F} A}$ as graded vector
spaces, by \eqref{E1.1.6},
$$\GKdim (\gr_{\mathcal F} M)_{\gr_{\mathcal F} B}=
\GKdim (\gr_{\mathcal F} M)_{\gr_{\mathcal F} A}.$$
Combining these statements we have
$$\GKdim M_B=\GKdim (\gr_{\mathcal F} M)_{\gr_{\mathcal F} B}
=\GKdim (\gr_{\mathcal F} M)_{\gr_{\mathcal F} A}
=\GKdim M_A.$$

(4) By localization we pass to the case when $\Bbbk$ is a field.
The assertion follows from \cite[Lemma  5.3]{KL}

(5) This follows from \cite[Proposition 5.6]{KL} with a slight
modification.
\end{proof}

The next lemma concerns the Krull dimension, $\Kdim$.

\begin{lemma}
\label{xxlem1.3}
\begin{enumerate}
\item[(1)]
\cite[Lemma 6.2.4]{KL}
$\Kdim$ is exact.
\item[(2)]
\cite[Proposition 6.4.13]{MR}
Suppose $A$ and $C$ are noetherian and PI. Then
$\Kdim$ is $(A, C)$-symmetric.
\end{enumerate}
\end{lemma}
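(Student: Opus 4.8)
The plan is to invoke the two cited results directly, as both are classical facts in the dimension theory of noetherian rings; for completeness I sketch the underlying ideas.

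For part (1), recall that $\Kdim(M)$ is the deviation of the lattice of submodules of $M$. Given a submodule $N \subseteq M$, the inequality $\Kdim(M) \geq \max\{\Kdim(N), \Kdim(M/N)\}$ is immediate, since the submodule lattices of $N$ and of $M/N$ appear as the intervals $[0,N]$ and $[N,M]$ in the submodule lattice of $M$, and deviation is monotone under passage to intervals. The content is the reverse inequality, which I would prove by transfinite induction on $\alpha := \max\{\Kdim(N), \Kdim(M/N)\}$: given a descending chain $M_0 \supseteq M_1 \supseteq \cdots$ in $M$, one tracks the two induced chains $M_i \cap N$ in $N$ and $(M_i+N)/N$ in $M/N$. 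The modular law identifies the factor $M_i/M_{i+1}$ as an extension of $(M_i+N)/(M_{i+1}+N)$ by $(M_i \cap N)/(M_{i+1} \cap N)$; since $\Kdim(N), \Kdim(M/N) \leq \alpha$, all but finitely many of these outer factors have dimension below $\alpha$, and the inductive hypothesis then forces all but finitely many $M_i/M_{i+1}$ to have Krull dimension strictly below $\alpha$, whence $\Kdim(M) \leq \alpha$. This is exactly \cite[Lemma 6.2.4]{KL}.

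For part (2), the symmetry $\Kdim(_CM) = \Kdim(M_A)$ for a $(C,A)$-bimodule $M$ finitely generated on each side rests on the structure theory of PI rings rather than on any soft lattice-theoretic argument. The strategy is to filter $M$ by sub-bimodules whose factors are torsion-free bimodules over prime PI quotients of $C$ and of $A$; for a prime PI ring, Posner's theorem supplies a simple Artinian Goldie quotient ring and the Krull dimension is governed symmetrically by the (commutative) center, so each factor contributes the same dimension on the left and on the right. Reassembling these contributions yields the equality of left and right Krull dimensions. This is \cite[Proposition 6.4.13]{MR}, which I would cite.

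The main obstacle --- were one to prove these from scratch rather than cite --- lies entirely in part (2): Krull dimension is genuinely \emph{not} left-right symmetric for arbitrary noetherian rings, so the PI hypothesis cannot be dropped, and extracting the symmetry demands the full machinery of PI structure theory (Posner's theorem, the good behavior of the prime spectrum, centrality arguments) with no available shortcut.
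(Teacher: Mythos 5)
Your proposal matches the paper's treatment exactly: the paper gives no proof of this lemma at all, presenting both parts purely as known results cited from \cite{KL} and \cite{MR}, which is precisely what you do. Your supplementary sketches (the deviation/modular-law induction for exactness, and the reduction via prime PI quotients and Posner's theorem for bimodule symmetry) are faithful accounts of the standard proofs of the cited results, so nothing further is needed.
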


We recall some more definitions. The first one is  from \cite{BHZ},
which is similar to torsitivity (Definition \ref{xxdef1.1}(5)) 
of a dimension function.

\begin{definition}
\cite[Definition 1.2]{BHZ}
\label{xxdef1.4}
Let $A$ and $B$ be noetherian algebras and
$\partial$ be an exact dimension function that is defined on right
$A$-modules and on right $B$-modules. Let $n$ and $i$ be nonnegative
integers. Let $_AM_B$ denote any bimodule which is f.g. both as a
left $A$-module and as a right $B$-module.
\begin{enumerate}
\item[(1)]
We say $\partial$ {\it satisfies $\gamma_{n,i}(M)$} if, for every f.g.
right $A$-module $N$ with $\partial(N_A)\leq n$, one has
$\partial(\Tor^A_j(N, M)_B)\leq n$
for all $0\leq j\leq i$.
\item[(2)]
We say $\partial$ {\it satisfies $\gamma_{n,i}$} if it satisfies
$\gamma_{n,i}(M)$ for all $M$ given as above.
\end{enumerate}
\end{definition}

\begin{definition}\cite[Definition 0.4]{ASZ1}
\label{xxdef1.5}
Let $B$ be an algebra and $M$ a right $B$-module.
\begin{enumerate}
\item[(1)]
Let $\partial$ be a dimension function. We say $B$ is
{\it $\partial$-Cohen-Macaulay} (or {\it $\partial$-CM}) if
$\partial (B)=d\in\mathbb{N}$,
and
$$j(M)+\partial (M)=\partial (B),$$
for every f.g. right $B$-module $M\neq 0$.
\item[(2)]
If $B$ is $\GKdim$-Cohen-Macaulay, we just
say it is {\it Cohen-Macaulay} or {\it CM}.
\end{enumerate}
\end{definition}

The CM property together with the Artin-Schelter regularity
and the Auslander property (another homological property)
has been studied in the noncommutative setting in
\cite{ASZ1, ASZ2, SZ, YZ, Zh1}.

\section{Hypotheses and results from \cite{BHZ}}
\label{xxsec2}

We repeat the following hypotheses given in \cite[Section 2]{BHZ}.

\begin{hypothesis} \cite[Hypothesis 2.1]{BHZ}
\label{xxhyp2.1}
\begin{enumerate}
\item[(1)]
$A$ and $B$ are noetherian algebras.
\item[(2)]
Let $e$ be an idempotent in $B$ and $A=eBe$.
\item[(3)]
$\partial$ is a dimension function defined on right $A$-modules and
right $B$-modules and $2\leq d:=\partial(B)< \infty$.
\item[(4)]
$B$ is a $\partial$-CM algebra.
\item[(5)]
The left $A$-module $eB$ and the right $A$-module $Be$ are f.g..
\item[(6)]
$\partial$ is exact on right $A$-modules and right $B$-modules.
\item[(7)]
For every f.g. right $B$-module $N$,
$\partial ((Ne)_A)\leq \partial (N_B)$.
\end{enumerate}
\end{hypothesis}

The original Hypothesis \ref{xxhyp2.1}(4) in \cite{BHZ} is weaker.
We use the present form to avoid another technical definition.
Under Hypothesis \ref{xxhyp2.1}(2), $Be$ is a right
$A$-module and there is a natural algebra morphism
\begin{equation}
\label{E2.1.1}\tag{E2.1.1}
\varphi: \; B\to \End_{A}(Be),\quad \varphi(b)(b'e)=bb'e
\end{equation}
induced by the left multiplication. Here is one of the main
results in \cite{BHZ}.

Some parts of hypotheses are satisfied automatically.

\begin{lemma}
\label{xxlem2.2}
Under Hypothesis {\rm{\ref{xxhyp2.1}(1)-(6)}},
Hypothesis {\rm{\ref{xxhyp2.1}(7)}} holds if one of the following is true.
\begin{enumerate}
\item[(i)]
$\partial$ is $(B,A)_0$-torsitive.
\item[(ii)]
$\partial=\GKdim$.
\item[(iii)]
$B$ is PI and $\partial=\Kdim$.
\end{enumerate}
\end{lemma}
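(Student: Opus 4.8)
The plan is to recognize the passage $N\mapsto Ne$ as tensoring with a bimodule, and thereby route all three cases through a single $(B,A)_0$-torsitivity statement. First I would record that $Be$ is a $(B,A)$-bimodule finitely generated on each side: as a left $B$-module it is cyclic, generated by $e$, and as a right $A$-module it is finitely generated by Hypothesis \ref{xxhyp2.1}(5). Since $B=Be\oplus B(1-e)$ as a left $B$-module, the canonical map $N\otimes_B Be\to N$ sending $n\otimes be\mapsto nbe$ is injective with image $Ne$, so there is a natural isomorphism $N\otimes_B Be\cong Ne$ of right $A$-modules. Hence $\partial((Ne)_A)=\partial((N\otimes_B Be)_A)$, and Hypothesis \ref{xxhyp2.1}(7) is exactly the assertion that this is bounded above by $\partial(N_B)$.

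Granting this identification, case (i) is immediate: specializing the defining inequality of $(B,A)_0$-torsitivity (Definition \ref{xxdef1.1}(5)) to the bimodule $M=Be$ and to $j=0$, where $\Tor^B_0(N,Be)=N\otimes_B Be$, gives precisely $\partial((Ne)_A)\le\partial(N_B)$. Case (ii) then reduces to (i): Lemma \ref{xxlem1.2}(5) states that $\GKdim$ is $(A,B)_0$-torsitive for any pair of noetherian algebras, and reading it with the two algebras interchanged shows that $\GKdim$ is $(B,A)_0$-torsitive, so (i) applies.

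For case (iii) I would argue directly on submodule lattices, using that $\Kdim$ is the deviation of the lattice of submodules. Let $\mathcal L_A(Ne)$ denote the lattice of right $A$-submodules of $Ne$ and $\mathcal L_B(N)$ that of right $B$-submodules of $N$, and define the order-preserving maps $\Phi\colon L\mapsto LB$ and $\Psi\colon K\mapsto Ke$ between them. The crucial point is that $\Psi\circ\Phi$ is the identity on $\mathcal L_A(Ne)$: for a right $A$-submodule $L\subseteq Ne$ one has $Le=L$ since $e$ is the identity of $A$, and then $LBe\subseteq L$ because $lbe=(le)be=l(ebe)\in LA=L$, while the reverse inclusion $L=Le\subseteq LBe$ holds as $e\in Be$; thus $(LB)e=L$. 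Consequently $\Phi$ reflects inclusions, i.e.\ it is an order-embedding, and since the deviation of a poset cannot exceed that of any poset into which it order-embeds (a descending chain in $\mathcal L_A(Ne)$ maps to a descending chain in $\mathcal L_B(N)$ with each factor interval embedding into the corresponding one, so transfinite induction on the deviation applies), we obtain $\Kdim((Ne)_A)\le\Kdim(N_B)$. The PI hypothesis serves only to place us within the $\Kdim$-theory of Lemma \ref{xxlem1.3}, ensuring that $A=eBe$ is again noetherian PI and that $\Kdim$ is exact on both module categories.

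I expect the main obstacle to be case (iii): one must verify carefully that $\Phi$ and $\Psi$ are well defined between the two lattices and that $(LB)e=L$, and then invoke (or prove from scratch) the monotonicity of deviation under an order-embedding of lattices, as opposed to the more familiar behaviour under an honest module homomorphism. Once the isomorphism $Ne\cong N\otimes_B Be$ is recorded, cases (i) and (ii) are essentially formal.
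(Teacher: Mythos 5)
Your proposal is correct, and for cases (i) and (ii) it coincides with the paper's proof: the paper likewise rewrites Hypothesis \ref{xxhyp2.1}(7) as $\partial((N\otimes_B Be)_A)\le\partial(N_B)$, observes this is the $j=0$, $M=Be$ instance of $(B,A)_0$-torsitivity, and reduces (ii) to (i) via Lemma \ref{xxlem1.2}(5) with the two algebras' roles interchanged. Where you genuinely diverge is case (iii). The paper handles it by forward-citing Lemma \ref{xxlem3.1}(2), whose proof is substantially heavier: it uses $\Kdim$-critical composition series, embeddings of critical modules into $B/\mathfrak{p}$ for prime ideals $\mathfrak{p}$, and the $(C,B)$-symmetry of $\Kdim$ over noetherian PI rings (Lemma \ref{xxlem1.3}(2)); the PI hypothesis is essential to that argument. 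Your lattice argument --- $(LB)e=L$ for every $A$-submodule $L\subseteq Ne$, so $L\mapsto LB$ is a strictly increasing map $\mathcal{L}_A(Ne)\to\mathcal{L}_B(N)$, and deviation is monotone along strictly increasing poset maps (a standard lemma on deviation in \cite[Chapter 6]{MR}, proved by exactly the transfinite induction you sketch, so you need not prove it from scratch) --- is correct, more elementary, and in fact strictly more general: it never uses PI, giving $\Kdim((Ne)_A)\le\Kdim(N_B)$ for any noetherian $B$ with idempotent $e$ and $A=eBe$. What the paper's route buys in exchange is the full $(B,A)_\infty$-torsitivity of $\Kdim$, which is needed again later to verify \eqref{E2.3.1} in the proof of Theorem \ref{xxthm3.3}, whereas your argument controls only the $j=0$ Tor term; for the lemma as stated, your route is shorter and assumes less.
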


\begin{proof}
(i) Hypothesis {\rm{\ref{xxhyp2.1}(7)}} can be written as
$\partial(( N\otimes_B Be)_A)\leq \partial (N_B)$ which is
a special case of $(B,A)_0$-torsitivity.

(ii) This follows by case (i) and Lemma \ref{xxlem1.2}(5).

(iii) By Lemma \ref{xxlem3.1}(2) in the next section, $\partial$ is
$(B,A)_{\infty}$-torsitive. The assertion follows by case (i).
\end{proof}

\begin{theorem}
\label{xxthm2.3}\cite[Theorem 2.4]{BHZ}
Let $(A,B)$ satisfy Hypothesis {\rm{\ref{xxhyp2.1}(1-7)}}. Suppose
\begin{equation}
\label{E2.3.1}\tag{E2.3.1}
{\text{$\partial$ satisfies $\gamma_{d-2,1}(eB)$.}}
\end{equation}
Then the following statements are equivalent.
\begin{enumerate}
\item[(i)]
The functor $-\otimes_{\mathcal{B}}\mathcal{B}e: \;
\qmod_{d-2} B\longrightarrow \qmod_{d-2} A$ is an equivalence.
\item [(ii)]
The natural map $\varphi$ of \eqref{E2.1.1} is an isomorphism
of algebras.
\item [(iii)] $\partial (B/(BeB))\leq d-2$.
\end{enumerate}
\end{theorem}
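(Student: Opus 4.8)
The plan is to exploit the idempotent localization structure carried by the pair $(B,e)$, translating conditions (i)--(iii) into a comparison of Serre subcategories of $\mmod B$ and into the grade of the bimodule $C:=B/BeB$. First I would record the Morita-context data attached to $e$: writing $P:=Be\otimes_A eB$ and $\beta\colon P\to B$ for the multiplication $be\otimes eb'\mapsto beb'$, the functor of (i) is $F:=-\otimes_B Be\cong(-)e$, which is exact, and its left adjoint is $G:=-\otimes_A eB$. The unit $\mathrm{id}\to FG$ is an isomorphism since $eB\otimes_B Be=eBe=A$, so $G$ is fully faithful and $F$ is a localization functor whose kernel on $\mmod B$ is $\mathcal T:=\{M\mid Me=0\}$; because $Me=0$ forces $MBe=0$ and hence $M\cdot BeB=0$, one has $\mathcal T=\mmod(B/BeB)$, and at the finitely generated level $F$ induces an equivalence $\mmod B/\mathcal T\simeq\mmod A$ (here $Be,eB$ f.g. guarantee $N\otimes_A eB\in\mmod B$ with $(N\otimes_A eB)e\cong N$). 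Two structural identities I would isolate now are $Ce=0$ and $(\ker\beta)e=0$, the latter because applying the exact functor $(-)e$ to $\beta$ gives the isomorphism $Pe=Be\otimes_A eBe=Be\otimes_A A\xrightarrow{\sim}Be$; consequently both $C$ and $\ker\beta$ lie in $\mathcal T$, so $\partial(\ker\beta)\le\partial(C)$ by exactness.

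Next I would reduce (i) to one inclusion of Serre subcategories and settle (i)$\Leftrightarrow$(iii). Exactness of $\partial$ (Hypothesis \ref{xxhyp2.1}(6)) makes $\mathcal B:=\{M\mid\partial(M)\le d-2\}$ and $\mathcal A:=\{N\mid\partial(N)\le d-2\}$ Serre, so $\qmod_{d-2}B=\mmod B/\mathcal B$ and $\qmod_{d-2}A=\mmod A/\mathcal A$. Composing with $\mmod A\simeq\mmod B/\mathcal T$ identifies $\qmod_{d-2}A$ with $\mmod B/\widetilde{\mathcal A}$, where $\widetilde{\mathcal A}:=\{M\mid\partial(Me)\le d-2\}$. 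Hypothesis \ref{xxhyp2.1}(7) gives $\partial(Me)\le\partial(M)$, i.e. $\mathcal B\subseteq\widetilde{\mathcal A}$, so $F$ descends to $\qmod_{d-2}B\to\qmod_{d-2}A$ and is an equivalence if and only if $\widetilde{\mathcal A}=\mathcal B$, that is, if and only if $\partial(Me)\le d-2$ implies $\partial(M)\le d-2$. For (i)$\Rightarrow$(iii) I apply this to $M=C$: since $Ce=0$ we have $C\in\widetilde{\mathcal A}=\mathcal B$, giving $\partial(B/BeB)\le d-2$. For (iii)$\Rightarrow$(i) I take $M$ with $\partial(Me)\le d-2$ and use the counit sequence $Me\otimes_A eB\xrightarrow{\epsilon_M}M\to M\otimes_B C\to0$, whose image is $MeB$: the cokernel $M\otimes_B C$ is a finitely generated $B/BeB$-module, hence of dimension $\le\partial(C)\le d-2$, while $MeB$ is a quotient of $Me\otimes_A eB$, of dimension $\le d-2$ by $\gamma_{d-2,0}(eB)$ (the $j=0$ case of \eqref{E2.3.1}); exactness then yields $\partial(M)\le d-2$.

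I would then prove (ii)$\Leftrightarrow$(iii) through the grade of $C$. The $\partial$-CM hypothesis (Hypothesis \ref{xxhyp2.1}(4)) gives $j(C)=d-\partial(C)$, so (iii) is equivalent to $j(C)\ge2$, i.e. $\Hom_B(C,B)=\Ext^1_B(C,B)=0$. The link to $\varphi$ is the chain of natural isomorphisms $\End_A(Be)=\Hom_A(Be,Be)\cong\Hom_A(Be,\Hom_B(eB,B))\cong\Hom_B(Be\otimes_A eB,B)=\Hom_B(P,B)$, using $\Hom_B(eB,B)\cong Be$ and tensor-hom adjunction; under it $\varphi$ becomes $\beta^{\ast}=\Hom_B(\beta,B)$. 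Applying $\Hom_B(-,B)$ to $0\to BeB\to B\to C\to0$ and to $0\to\ker\beta\to P\to BeB\to0$ and splicing, I would identify $\ker\varphi\cong\Hom_B(C,B)$ and produce an exact sequence $0\to\Ext^1_B(C,B)\to\coker\varphi\to Q\to0$ with $Q\hookrightarrow\Hom_B(\ker\beta,B)$. Since $\partial(\ker\beta)\le\partial(C)$, the CM grade formula forces $\Hom_B(\ker\beta,B)=0$ as soon as $j(C)\ge1$, so the $\ker\beta$-term is inert: $\varphi$ is an isomorphism precisely when $\Hom_B(C,B)=\Ext^1_B(C,B)=0$, i.e. precisely under (iii).

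The hard part will not be any single estimate but the bookkeeping where the two quotient constructions meet. I expect the genuine care to go into verifying that the localization $\mmod A\simeq\mmod B/\mathcal T$ is valid at the finitely generated level and is compatible with the dimension filtration, so that $\qmod_{d-2}A\cong\mmod B/\widetilde{\mathcal A}$ as claimed, and into pinning $\coker\varphi$ down to $\Ext^1_B(C,B)$ alone. Both points rest on the structural identities $Ce=0$ and $(\ker\beta)e=0$ together with the CM grade formula, while the only place where a small submodule could a priori spawn a large-dimensional module -- namely the passage from $Me$ to $MeB$ -- is exactly the step controlled by $\gamma_{d-2,0}(eB)$.
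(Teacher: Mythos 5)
You should know at the outset that the paper itself contains no proof of Theorem \ref{xxthm2.3}: it is imported verbatim from \cite[Theorem 2.4]{BHZ} (the paper explicitly defers even the definition of $\qmod_{d-2}$ to that reference), so there is no in-paper argument to compare against, and your attempt must be judged against the source. Judged on its own merits, your proof is correct. The $(ii)\Leftrightarrow(iii)$ half---identifying $\varphi$ with $\Hom_B(\beta,B)$ via $\End_A(Be)\cong\Hom_B(Be\otimes_AeB,B)$, extracting $\ker\varphi\cong\Hom_B(C,B)$ and $0\to\Ext^1_B(C,B)\to\operatorname{coker}\varphi\to Q\to0$ with $Q\subseteq\Hom_B(\ker\beta,B)$, and then killing everything with the $\partial$-CM identity $j(-)+\partial(-)=d$ together with $\partial(\ker\beta)\le\partial(C)$---is exactly the Auslander-style dualization argument one expects, and it is sound (note your observation $(\ker\beta)e=0$, which puts $\ker\beta$ in $\mathcal T$, is the key to making the $Q$-term inert). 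The $(i)\Leftrightarrow(iii)$ half is also sound: the idempotent localization $\mmod B/\mathcal T\simeq\mmod A$ is legitimate here because $A$, $B$ are noetherian and $Be$, $eB$ are f.g., and your reduction of (i) to the equality of Serre subcategories $\widetilde{\mathcal A}=\mathcal B$ (an equivalence between quotients of $\mmod B$ by nested Serre subcategories forces the subcategories to coincide) is clean; condition (iii) plus $\gamma_{d-2,0}(eB)$ then gives the nontrivial inclusion via the filtration $0\subseteq MeB\subseteq M$.

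One genuine structural difference is worth recording: your argument never invokes the $\Tor_1$ half of hypothesis \eqref{E2.3.1}, only $\gamma_{d-2,0}(eB)$. The natural role of the $\Tor_1$ control in the formulation of \cite{BHZ} is to let the non-exact left adjoint $-\otimes_AeB$ descend to the quotient categories, so that the equivalence in (i) can be exhibited together with a quasi-inverse; your Serre-subcategory comparison sidesteps the construction of any quasi-inverse, so in effect you prove the theorem under the formally weaker hypothesis $\gamma_{d-2,0}(eB)$. That is a modest but real sharpening, not a gap. The only places requiring the care you already flag are the fullness of the induced functor $\mmod B/\mathcal T\to\mmod A$ at the finitely generated level (standard, using that kernels and cokernels of the counit $Me\otimes_AeB\to M$ lie in $\mathcal T$) and the fact that every f.g. module in $\mathcal T$ is a quotient of some $C^{\oplus n}$, whence $\partial\le\partial(C)$ by exactness; both check out.
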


We mainly concern with parts (ii) and (iii), so will not explain
part (i) in the above theorem (definitions can be found in \cite{BHZ}).
There is also a graded version
of Theorem \ref{xxthm2.3}. For applications to the Auslander
theorem, we need have a Hopf action.

Let $(H,\Delta,\varepsilon, S)$ be a Hopf algebra that is free
of finite rank over the base commutative domain $\Bbbk$.
Let $R$ be a noetherian algebra and
assume that $R$ is a left $H$-module algebra, i.e., $H$ acts on
$R$. Then the smash product $R\#H$ is a noetherian algebra.
Suppose that $H$ has a (left and right) integral $\inth$ such that
$\varepsilon(\inth)=1$. If $\Bbbk$ is a field, this is equivalent to
the fact that $H$ is semisimple. Let $e=1\# \inth\in R\#H$. One sees
that $e$ is an idempotent of $R\#H$. The fixed subring of the
$H$-action on $R$ is
\begin{equation}
\notag%\label{E2.3.2}\tag{E2.3.2}
R^H:=\{r\in R|h\cdot r=\varepsilon(h)r,\forall\ h\in H\}.
\end{equation}
Then $R^H$ is a subalgebra of $R$. We recall the hypothesis
in \cite[Section 3]{BHZ} in the ungraded setting.

\begin{hypothesis}
\cite[Hypothesis 3.2]{BHZ}
\label{xxhyp2.4}
\begin{enumerate}
\item[(1)]
$R$ is a noetherian  algebra.
\item[(2)]
$H$ is a Hopf algebra given as above with an action on $R$.
\item[(3)]
Let $B$ be the algebra $R\# H$ with $e:=1\#\inth\in B$.
Identifying $R^H$ with $eBe$ by \cite[Lemma 3.1(3)]{BHZ},
$R$ with $Be$ by \cite[Lemma 3.1(5)]{BHZ}.
\item[(4)]
Let $\partial$ be an exact dimension function on right f.g. $B$-modules,
$R$-modules, and $R^H$-modules, and $\partial(R)=:d\geq 2$.
\item[(5)]
$\partial$ is $B/R$-hereditary.
\item[(6)]
$R$ is $\partial$-CM.
\end{enumerate}
\end{hypothesis}

Again Hypothesis \ref{xxhyp2.4}(6)  is stronger than the original version in
\cite[Hypothesis 3.2(6)]{BHZ} to avoid a new concept. On the other hand,
the original version requires $R$ to be locally finite
graded, which is removed here. We need an ungraded
version of \cite[Proposition 3.3]{BHZ}.

\begin{proposition} \cite[Proposition 3.3]{BHZ}
\label{xxpro2.5}
Retain Hypothesis {\rm{\ref{xxhyp2.4}(1-6)}}. Assume
that $\Bbbk$ is a field. Then $B:=R\#H$ is $\partial$-CM.
\end{proposition}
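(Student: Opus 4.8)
The plan is to deduce the $\partial$-CM property of $B=R\#H$ from that of $R$ (Hypothesis \ref{xxhyp2.4}(6)) by separately comparing, for a f.g.\ right $B$-module $M$, the dimension $\partial(M_B)$ with $\partial(M_R)$ and the grade $j(M_B)$ with $j(M_R)$. These two comparisons exploit two different features of the inclusion $R\subseteq B$. The dimension comparison is precisely the $B/R$-heredity of Hypothesis \ref{xxhyp2.4}(5). The grade comparison will instead come from the fact that, since $\Bbbk$ is a field and $H$ is finite dimensional, $R\subseteq R\#H$ is a Frobenius extension with $B$ free of finite rank on each side.

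First I would record that $\partial(B)=d$. Since $H$ is finite dimensional, $B$ is free as a right $R$-module of rank $n:=\dim_{\Bbbk} H$ (a standard fact for smash products; projectivity already suffices below). Exactness of $\partial$ (Hypothesis \ref{xxhyp2.4}(4)) then gives $\partial(B_R)=\partial(R^{\,n})=\partial(R)=d$, and $B/R$-heredity applied to $M=B$ gives $\partial(B_B)=\partial(B_R)=d$. Moreover, for every nonzero f.g.\ right $B$-module $M$ the restriction $M_R$ is nonzero and, since $B_R$ is f.g., again f.g.\ as an $R$-module; heredity then gives $\partial(M_B)=\partial(M_R)$.

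The heart of the argument is the identity $j(M_B)=j(M_R)$, that is, $\Ext^i_B(M,B)\neq 0\iff \Ext^i_R(M_R,R)\neq 0$ for every $i$. To establish it I would use that $R\subseteq B$ is a Frobenius extension: because $H$ is semisimple it is unimodular (its left and right integrals coincide, as already noted), so by the Fischman--Montgomery--Schneider description of $R\#H$ the extension is (untwisted) Frobenius, $B_R$ is f.g.\ projective, and the coinduced module $\Hom_R(B_R,R)$ is isomorphic to $B$ as a right $B$-module. Since $B_R$ is projective, coinduction is exact, so Eckmann--Shapiro gives $\Ext^i_B\!\big(M,\Hom_R(B_R,R)\big)\cong \Ext^i_R(M_R,R)$ for all $i$; combining with the Frobenius isomorphism yields $\Ext^i_B(M,B)\cong \Ext^i_R(M_R,R)$ and hence $j(M_B)=j(M_R)$. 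I note that even a $\beta$-Frobenius (twisted) extension would suffice here, since $\Hom_R(B_R,R)$ is then still free of rank one as a right $B$-module, so $\Ext^i_B(M,B)$ and $\Ext^i_R(M_R,R)$ have the same vanishing pattern and the grade identity is unaffected.

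Finally I would assemble the pieces. For a nonzero f.g.\ right $B$-module $M$, the $\partial$-CM property of $R$ gives $j(M_R)+\partial(M_R)=d$; substituting $j(M_B)=j(M_R)$ and $\partial(M_B)=\partial(M_R)$ yields $j(M_B)+\partial(M_B)=d=\partial(B)$, which is exactly the $\partial$-CM condition for $B$. The hard part will be the grade comparison: one must track the precise bimodule structures entering the Eckmann--Shapiro isomorphism and verify the Frobenius identification $\Hom_R(B_R,R)\cong B$ (together with the harmless modular twist), whereas the dimension statements reduce to bookkeeping with the exactness and heredity hypotheses.
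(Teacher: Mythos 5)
Your proof is correct and is essentially the argument the paper intends: the paper's own proof of Proposition \ref{xxpro2.5} is literally the instruction to repeat the proof of \cite[Proposition 3.3]{BHZ} in the ungraded setting, and that argument runs exactly along your lines --- $\partial(M_B)=\partial(M_R)$ from $B/R$-heredity together with freeness of $B_R$ of finite rank (exactness handling $\partial(B)=\partial(R)=d$), and $j(M_B)=j(M_R)$ from the Frobenius property of $R\subseteq R\#H$ combined with Eckmann--Shapiro, so that $j(M_B)+\partial(M_B)=j(M_R)+\partial(M_R)=d=\partial(B)$. Your precaution about the twisted ($\beta$-)Frobenius case is well placed, since untwistedness genuinely requires more than unimodularity of $H$ when $\ch \Bbbk>0$, but as you observe the twist only alters the left $R$-structure of $\Hom_R(B_R,R)$, which is still isomorphic to $B$ as a right $B$-module, so the vanishing pattern of $\Ext^i_B(M,B)$ and hence the grade comparison is unaffected.
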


\begin{proof} Repeat the proof of \cite[Proposition 3.3]{BHZ}
in the ungraded setting.
\end{proof}

\begin{lemma}
\label{xxlem2.6} Suppose $\partial$ is either $\GKdim$ when
algebras are in Lemma {\rm{\ref{xxlem1.2}(1)}} or $\Kdim$
when algebras are PI. Assume that $\Bbbk$ is a field.
Under Hypothesis {\rm{\ref{xxhyp2.4}}},
Hypothesis {\rm{\ref{xxhyp2.1}}} holds.
\end{lemma}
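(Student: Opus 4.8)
The plan is to check the seven clauses of Hypothesis~\ref{xxhyp2.1} one at a time against Hypothesis~\ref{xxhyp2.4}, using the identifications $A=R^H=eBe$, $B=R\#H$, and $Be\cong R$, and feeding two already-available results into the two nontrivial clauses: Proposition~\ref{xxpro2.5} will supply clause~(4) and Lemma~\ref{xxlem2.2} will supply clause~(7). Several clauses are immediate. Clause~(2) is literally Hypothesis~\ref{xxhyp2.4}(3); clause~(6) and the ``$\partial$ is defined on right $A$- and right $B$-modules'' part of clause~(3) are contained in Hypothesis~\ref{xxhyp2.4}(4), since $A=R^H$. For clause~(1), $B=R\#H$ is noetherian because $R$ is noetherian and $H$ is free of finite rank over the field $\Bbbk$, and then $A=eBe$ is noetherian as a corner ring of the noetherian ring $B$.

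Next I would pin down $\partial(B)$ for the remainder of clause~(3). Regarding $B$ as a right $R$-module it is free of finite rank $\dim_\Bbbk H$ (the antipode of a finite-dimensional Hopf algebra is bijective), so exactness of $\partial$ gives $\partial(B_R)=\partial(R)=d$; the $B/R$-hereditary hypothesis, Hypothesis~\ref{xxhyp2.4}(5), then upgrades this to $\partial(B_B)=\partial(B_R)=d$. Since $R$ is $\partial$-CM with $\partial(R)=d\in\mathbb{N}$ and $d\geq 2$, we obtain $2\leq d=\partial(B)<\infty$, which is clause~(3). Clause~(4), that $B$ is $\partial$-CM, is then exactly the conclusion of Proposition~\ref{xxpro2.5}, whose hypotheses Hypothesis~\ref{xxhyp2.4}(1--6) are in force and whose running assumption that $\Bbbk$ is a field is part of the present statement.

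The two remaining clauses carry the real content. Clause~(5) asks that $Be\cong R$ be finitely generated as a right $R^H$-module and $eB\cong R$ as a left $R^H$-module; both reduce to $R$ being module-finite over $R^H$ on each side, which is where the semisimplicity of $H$ (the integral $\inth$ with $\varepsilon(\inth)=1$) is genuinely used and which I would invoke as the standard structural fact underlying the \cite{BHZ} setup. Clause~(7) I would obtain from Lemma~\ref{xxlem2.2}, whose hypotheses~(1--6) have by now been verified: for $\partial=\GKdim$ it is case~(ii) directly, and for $\partial=\Kdim$ on PI algebras it is case~(iii), for which one first notes that $B=R\#H$ is itself PI --- the action of $B$ on the free left $R$-module ${}_RB$ embeds $B$ into a matrix ring over $R$, and matrix rings over a PI ring are PI. The main obstacle is thus not the bookkeeping but these two inputs, the module-finiteness of $R$ over $R^H$ and the PI-ness of $R\#H$; both rest on the finite-dimensionality and semisimplicity of $H$ rather than on the axioms of $\partial$, and once they are granted every clause of Hypothesis~\ref{xxhyp2.1} matches up routinely.
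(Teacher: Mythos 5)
Your proposal is correct and takes essentially the same route as the paper's proof: a clause-by-clause verification of Hypothesis \ref{xxhyp2.1}, with clause (2) read off from Hypothesis \ref{xxhyp2.4}(3), Proposition \ref{xxpro2.5} supplying clause (4), and Lemma \ref{xxlem2.2} supplying clause (7). The only differences are that you fill in details the paper leaves implicit or cites from \cite{BHZ} --- the computation $\partial(B)=d$ via freeness of $B_R$ and $B/R$-heredity for clause (3), the corner-ring argument for the noetherianness of $A=eBe$ in clause (1), the module-finiteness of $R$ over $R^H$ for clause (5), and the PI-ness of $B=R\#H$ needed to invoke Lemma \ref{xxlem2.2}(iii) --- and these are all correct.
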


\begin{proof} We need to check (1)-(7) in Hypothesis {\rm{\ref{xxhyp2.1}}}.

(1) Since $R$ is noetherian, so is $B:=R\# H$.
By \cite[Lemma 3.1(1)]{BHZ}, $A:=R^H$ is noetherian.

(2) Follows from Hypothesis \ref{xxhyp2.4}(3).

(3) Since $\partial$ is either $\GKdim$ or $\Kdim$, this is not hard
to check.

(4) This is Proposition \ref{xxpro2.5}.

(5) This follows from \cite[Lemma 3.1(2)]{BHZ}.

(6) When $\partial=\GKdim$, this is Lemma \ref{xxlem1.2}(1)
and when $\partial=\Kdim$, this is Lemma \ref{xxlem1.3}(1).

(7) This is Lemma \ref{xxlem2.2}.
\end{proof}

We conclude this section by stating a result of \cite{BHZ}.

\begin{theorem}\cite[Theorem 3.5]{BHZ}
\label{xxthm2.7}
Retain Hypothesis {\rm{\ref{xxhyp2.4}(1-3)}} and assume that
$R$ is locally finite and graded over a base field
$\Bbbk$. Further assume that $R$ is CM with $\GKdim R\geq 2$.
Set $(A,B)=(R^H,R\#H)$. Then the following are equivalent.
\begin{enumerate}
\item[(i)]
The natural map
$\varphi:B\longrightarrow\text{\rm End}_A(R)$ is an
isomorphism of algebras.
\item[(ii)]
$\Pty(R,H)\geq 2$.
\item[(iii)]
The $H$-action on $R$ is  h.small.
\end{enumerate}
\end{theorem}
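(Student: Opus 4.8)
The plan is to run the machine of Theorem \ref{xxthm2.3} with $\partial=\GKdim$ for the pair $(A,B)=(R^H,R\#H)$, and then to reinterpret its three equivalent conditions in terms of pertinency and homological smallness by means of the Cohen--Macaulay property of $R$. Fix $\partial=\GKdim$, write $e=1\#\inth$, and recall from Hypothesis \ref{xxhyp2.4}(3) the identifications $A=R^H=eBe$ and $R=Be$. The first task is to verify Hypothesis \ref{xxhyp2.4}(1--6): parts (1--3) are retained, parts (4) and (6) are precisely the standing assumptions that $R$ is locally finite graded, Cohen--Macaulay, and $\GKdim R=d\ge 2$, while the $B/R$-hereditary property (5) follows from Lemma \ref{xxlem1.2}(3), since $B=R\#H$ is locally finite graded and finitely generated as a right $R$-module ($H$ having finite rank). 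With Hypothesis \ref{xxhyp2.4} in force, Lemma \ref{xxlem2.6} supplies Hypothesis \ref{xxhyp2.1}; in particular $\GKdim B=d$, and $B$ is $\GKdim$-CM by Proposition \ref{xxpro2.5}.

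Next I would check the remaining input \eqref{E2.3.1}, namely that $\GKdim$ satisfies $\gamma_{d-2,1}(eB)$. Here $eB$ is an $(A,B)$-bimodule finitely generated on both sides, and Lemma \ref{xxlem1.2}(6) gives that $\GKdim$ is $(A,B)_{\infty}$-torsitive in the graded setting; hence for every f.g.\ right $A$-module $N$ with $\GKdim(N_A)\le d-2$ one has $\GKdim(\Tor^A_j(N,eB)_B)\le d-2$ for all $j$, which is $\gamma_{d-2,1}(eB)$ and more. The graded form of Theorem \ref{xxthm2.3} then yields the equivalence
\[
\varphi\ \text{is an isomorphism}\quad\Longleftrightarrow\quad \GKdim\bigl(B/(BeB)\bigr)\le d-2 .
\]
Since $Be=R$ gives $\End_A(Be)=\End_A(R)$, the map $\varphi$ of \eqref{E2.1.1} is exactly the one in statement (i), so this is the equivalence of (i) with the codimension bound.

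It then remains to translate that bound into (ii) and (iii). The two-sided ideal $I$ generated by $1\#\inth=e$ is exactly $BeB$, so $(R\#H)/I=B/(BeB)$ and, by Definition \ref{xxdef0.1}(1), $\Pty(R,H)=\GKdim R-\GKdim(B/(BeB))=d-\GKdim(B/(BeB))$; thus $\GKdim(B/(BeB))\le d-2$ is literally (ii), giving (i)$\Leftrightarrow$(ii). For (iii), homological smallness asks that the \emph{right $R$-module} $B/(BeB)$ have grade $j\ge 2$. Because $R$ is $\GKdim$-CM with $\GKdim R=d$, the identity $j(M)+\GKdim(M_R)=d$ converts this into $\GKdim\bigl((B/(BeB))_R\bigr)\le d-2$, and the $B/R$-hereditary property \eqref{E1.1.3} identifies $\GKdim\bigl((B/(BeB))_R\bigr)=\GKdim\bigl((B/(BeB))_B\bigr)$, returning the same inequality. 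Hence (i)$\Leftrightarrow$(ii)$\Leftrightarrow$(iii).

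The genuinely hard analytic content—the equivalence of the $\varphi$-isomorphism with the codimension-$2$ bound—is already packaged inside Theorem \ref{xxthm2.3}, so within this argument the work is hypothesis-checking and careful bookkeeping. The step I expect to be most delicate is the final one: one must keep straight which module structure ($R$ versus $B$) and which invariant (grade versus $\GKdim$) appears in each of (ii) and (iii), and it is precisely the Cohen--Macaulay identity (passing between grade and dimension over $R$) together with $B/R$-heredity (passing between the $R$- and $B$-structures) that make the two quantities coincide; without the CM hypothesis the equivalence (ii)$\Leftrightarrow$(iii) would break down.
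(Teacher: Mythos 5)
Your proposal is correct and is essentially the intended argument: the paper itself does not reproduce a proof of Theorem \ref{xxthm2.7} (it is quoted from \cite[Theorem 3.5]{BHZ}), but your pipeline --- verify Hypothesis \ref{xxhyp2.4} (with $B/R$-heredity via Lemma \ref{xxlem1.2}(3)), pass to Hypothesis \ref{xxhyp2.1} by Lemma \ref{xxlem2.6}, get \eqref{E2.3.1} from the graded torsitivity of Lemma \ref{xxlem1.2}(6), apply the graded form of Theorem \ref{xxthm2.3}, and then convert its codimension condition into (ii) and (iii) using $I=BeB$, the CM identity $j(M)+\GKdim(M_R)=d$, and $B/R$-heredity --- is exactly the scheme the paper runs for its $\Kdim$ analogue, Theorem \ref{xxthm3.3}, and matches the proof in \cite{BHZ}. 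The only point worth making explicit is that the grading (and local finiteness) of $B=R\#H$ and of $A=R^H$, which you use in Lemmas \ref{xxlem1.2}(3) and \ref{xxlem1.2}(6), presupposes that the $H$-action preserves the grading of $R$, as in the graded hypothesis of \cite[Section 3]{BHZ}.
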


\section{The PI case}
\label{xxsec3}

{\it In this section the base ring $\Bbbk$ is a field.}
We start with a few lemmas.

\begin{lemma}
\label{xxlem3.1}
All algebras mentioned in this lemma are noetherian and PI.
Let $B$ be an algebra and let $A$ be a subalgebra of $B$ such that
$B_A$ is f.g.. Let $\partial$ be a dimension function in the sense
of \cite[\S6.8.4, p.224]{MR} defined on left and right $A$- and $B$-modules
such that
\begin{enumerate}
\item[(i)]
$\partial$ is exact and symmetric over any two algebras.
\item[(ii)]
$\partial$ is an integer-valued function.
\item[(iii)]
$\partial(M)\geq 0$ for all $M\neq 0$ and $\partial(0)=-1$.
\end{enumerate}
Then the following hold.
\begin{enumerate}
\item[(1)]
Assume that $\partial (B)<\infty$. Then
$\partial$ is $B/A$-hereditary.
\item[(2)]
If $C$ is another algebra and $\partial$ is defined on right $C$-modules, 
then $\partial$ is $(C,B)_{\infty}$-torsitive.
\end{enumerate}
\end{lemma}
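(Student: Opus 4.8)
The plan is to prove both parts by reducing an arbitrary f.g.\ module to a \emph{prime quotient} and then letting the symmetry of $\partial$ (Definition~\ref{xxdef1.1}(4)) trade a right-hand dimension for a left-hand one over that prime quotient; this is exactly what lets the one-sided hypothesis ``$B_A$ is f.g.'' suffice, with no need for ${}_AB$ to be f.g. Throughout I would use freely that $A$, $B$, $C$ and all their quotients are noetherian PI, that $\partial$ is exact (so $\partial(M)$ is the maximum of $\partial$ over the factors of any finite filtration, and a submodule never has larger $\partial$), and two elementary consequences of the hypotheses (i)--(iii): for a f.g.\ module $X$ over an algebra $S$ one has $\partial(X)\le\partial(S)$ (write $X$ as a quotient of a free module and use exactness), and $\partial({}_SS)=\partial(S_S)$ (apply $(S,S)$-symmetry). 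I would also use the Goldie-theoretic fact that a torsion-free f.g.\ module over a prime noetherian ring embeds between two finite free modules.

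For part (2) I argue by induction on $n:=\partial(N_C)$, the case $N=0$ being trivial. Fix a $(C,B)$-bimodule $M$ f.g.\ on both sides. A finite affiliated series of $N_C$ has factors that are torsion-free f.g.\ modules over prime quotients $C/P_i$ with $\partial((C/P_i)_C)\le n$; by the long exact sequence in $\Tor$ and exactness of $\partial$ it suffices to bound $\partial(\Tor^C_j(G,M)_B)$ for one such factor $G$ over $S:=C/P$. Embedding $G\hookrightarrow S^{\oplus k}$ with torsion cokernel $T$ (so $\partial(T_C)<\partial(S_S)\le n$, hence $\partial(T_C)\le n-1$ since $\partial$ is integer-valued) and using the long exact sequence again reduces the problem to the term $\Tor^C_j(S^{\oplus k},M)=\Tor^C_j(S,M)^{\oplus k}$ together with terms $\Tor^C_\bullet(T,M)$, the latter being $\le\partial(T_C)<n$ by the induction hypothesis. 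Now $S=C/P$ is a genuine $(C/P,C)$-bimodule: resolving $M$ by f.g.\ free left $C$-modules presents $\Tor^C_j(S,M)$ as a subquotient of a finite sum of copies of $S$, hence f.g.\ as a left $S$-module, while it is a subquotient of a finite sum of copies of $M$, hence f.g.\ as a right $B$-module. Thus $(S,B)$-symmetry gives
\[
\partial\big(\Tor^C_j(S,M)_B\big)=\partial\big({}_S\Tor^C_j(S,M)\big)\le\partial({}_SS)=\partial(S_S)=\partial((C/P)_C)\le n,
\]
and combining the contributions yields $\partial(\Tor^C_j(N,M)_B)\le n=\partial(N_C)$, i.e.\ $(C,B)_{\infty}$-torsitivity.

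For part (1) I filter a f.g.\ right $B$-module $M$ by an affiliated series whose factors $F$ are torsion-free f.g.\ modules over prime quotients $S:=B/Q$; since the restriction of a filtration to $A$ is again a filtration, exactness reduces the claim to $\partial(F_A)=\partial(F_B)$ for each such $F$. Write $\bar A$ for the image of $A$ in $S$; because $B_A$ is f.g., so is $S_{\bar A}$, and $F_A=F_{\bar A}$ is f.g.\ over $\bar A$. Goldie theory gives embeddings of right $S$-modules $S^{\oplus m}\hookrightarrow F\hookrightarrow S^{\oplus k}$; over $S$ these sandwich $\partial(F_S)$ between two copies of $\partial(S_S)$, giving $\partial(F_S)=\partial(S_S)$, and restricting them to $\bar A$ sandwiches $\partial(F_{\bar A})$ between two copies of $\partial(S_{\bar A})$, giving $\partial(F_{\bar A})=\partial(S_{\bar A})$ (here only submodule monotonicity of $\partial$ is used, so no torsion estimate is needed). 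Finally $(S,\bar A)$-symmetry applied to the bimodule ${}_SS_{\bar A}$ — whose left side is the free module ${}_SS$ and whose right side $S_{\bar A}$ is f.g. — yields $\partial(S_{\bar A})=\partial({}_SS)=\partial(S_S)$. Chaining these, $\partial(F_A)=\partial(F_{\bar A})=\partial(S_{\bar A})=\partial(S_S)=\partial(F_S)=\partial(F_B)$, and taking the maximum over the factors gives $\partial(M_A)=\partial(M_B)$.

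The technical heart in both parts is the same, and it is where I expect the real work to lie: one must present the relevant object (the module $\Tor^C_j(S,M)$ in part (2), the factor $F$ in part (1)) as a bimodule that is finitely generated on \emph{both} sides, so that symmetry of $\partial$ applies, and one must crucially arrange that the left-acting algebra is the prime quotient itself rather than the subalgebra — this is the precise point at which only the one-sided finiteness $B_A$ (resp.\ $S_{\bar A}$) is required. The remaining inputs — existence of affiliated-prime filtrations, the drop $\partial(T_C)<\partial(S_S)$ for torsion modules over a prime noetherian ring, and the embedding of torsion-free modules between finite free modules — are standard, so I anticipate the only delicate bookkeeping to be the compatibility of the $\Tor$ long exact sequences with the chosen filtrations and the verification of the two-sided finiteness of $\Tor^C_j(S,M)$.
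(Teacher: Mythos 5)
Your global strategy is the same as the paper's: reduce to a prime quotient and then let the symmetry of $\partial$, applied to a bimodule that is f.g.\ on both sides, convert a right-hand dimension into a left-hand one. But the Goldie-theoretic reduction you use to reach the prime quotient is false, and this is a genuine gap in both parts. You claim that a f.g.\ torsion-free module $F$ over a prime noetherian ring $S$ admits a sandwich $S^{\oplus m}\hookrightarrow F\hookrightarrow S^{\oplus k}$ with $m\geq 1$ (part (1)) and an embedding $F\hookrightarrow S^{\oplus k}$ with \emph{torsion} cokernel (part (2)). Both fail whenever the uniform dimension (reduced rank) of $F$ is not a multiple of that of $S$. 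A counterexample inside the PI setting of this lemma: take $S=M_2(\Bbbk[x])$ and let $F$ be a uniform right ideal (a ``row'' ideal). Then $F$ is f.g.\ torsion-free and embeds in $S$, but $F$ has uniform dimension $1$ while every nonzero free module has uniform dimension at least $2$, so $F$ contains no nonzero free submodule; and since reduced rank is additive on short exact sequences, the cokernel of any embedding $F\hookrightarrow S^{\oplus k}$ has reduced rank $2k-1\geq 1$, hence is never torsion. In part (1) this destroys the lower bound $\partial(F_{\bar A})\geq\partial(S_{\bar A})$ (and with it your claim that no torsion estimate is needed); in part (2) it destroys the identification of the error term $T$ as a torsion module of strictly smaller dimension, which is exactly the module your induction hypothesis is applied to. The same flaw infects your earlier assertion that $\partial((C/P_i)_C)\leq n$ for the primes in the filtration.

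The paper's proof shows the correct replacement, and it is not cosmetic: pass (by exactness) to a $\Kdim$-critical factor $M$, embed it into $T:=B/\mathfrak p$ using \cite[Lemma 2.1]{SZ}, and then replace $M$ by $M^{\oplus s}$ so that its image is \emph{essential} in $T$; this is legitimate because $\partial$ is exact (so $\partial(M^{\oplus s})=\partial(M)$) and $\Tor$ commutes with finite direct sums. Then $T/M$ is torsion, $\partial(T/M)<n$, and induction absorbs it, while symmetry is applied to $T$ itself --- as a $(B,A)$- and $(B,B)$-bimodule in part (1), and to $\Tor^C_i(T,S)$ as a $(C,B)$-bimodule in part (2). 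Note a second advantage you lose: by keeping $B$ (resp.\ $C$) as the left-acting algebra throughout, the paper never needs $\partial$ to be defined, exact, or symmetric over the quotient algebras $S=B/Q$ and $\bar A$, nor the identification $\partial(X_S)=\partial(X_B)$ for $S$-modules $X$; your chains of equalities use all of these, and the lemma's hypotheses literally grant them only for the named algebras $A$, $B$, $C$. That secondary point could perhaps be argued away via the cited definition in McConnell--Robson, but the free-module sandwich cannot: it must be replaced by the essential-embedding-after-multiplying-up argument (or an equivalent reduced-rank bookkeeping), which is precisely the technical heart of the paper's proof.
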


\begin{proof} (1) We show that, if $M$ is
a f.g. right $B$-module, then
$$\partial (M_A)= \partial (M_B),$$
by induction on $\partial (M_B)$. Nothing needs to be proven if
$\partial (M_B)<0$. Suppose that
$\partial (N_A)=\partial (N_B)$ for all f.g. right
$B$-modules $N$ with $\partial (N_B)< n$. Let $M$ be a
f.g. right $B$-module with $\partial (M_B)=n$.
Since $\partial$ is exact and $B$ is PI, we can assume that $M_B$
is critical and $M_B$ is a submodule of $B/{\mathfrak p}$ for
some prime ideal ${\mathfrak p}\subseteq B$ (see \cite[Lemma 2.1]{SZ}).
Replacing $M_B$ by $M_B^{\oplus s}$ for some $s$, we can
assume that $M_B$ is essential in $T:=B/{\mathfrak p}$. Then
$\partial ((T/M)_B)< n$.
So $\partial (M_B)=\partial (T_B)$ by the exactness. By
induction hypothesis, $\partial ((T/M)_A)=\partial ((T/M)_B)< n$.

Note that $T$ is a $B$-bimodule and a $(B,A)$-bimodule,
which is f.g. over $A$ and $B$ on both sides. By the symmetry
of $\partial$,
$$\partial (T_A)=\partial ({_BT})=\partial (T_B)=\partial (M_B)=n.$$
Since $\partial (T_A)= n$ and $\partial ((T/M)_A)< n$, by the exactness,
$$\partial (M_A)=\partial (T_A)=n=\partial (M_B)$$
which finishes the induction.

(2) Fix a $(C,B)$-bimodule $S$ that is f.g. on both sides.
We show that, if $M$ is a f.g. right $C$-module, then
$$\partial (\Tor^C_i(M,S)_B)\leq \partial (M_C),$$
by induction on $\partial (M_C)$. Nothing needs to be proven if
$\partial (M)<0$.
Suppose that the assertion holds for all $M$ that
is a f.g. right $C$-module with $\partial (M)<n$.
Let $M$ be a f.g. right $C$-module with $\partial (M)=n$.
Note that $M$ has a $\Kdim$-critical composition series 
$$M=M_0\supseteq M_1\supseteq M_2\supseteq \cdots\supseteq M_n=0,$$ 
such that $M_j/M_{j+1}$ is $\Kdim$-critical and 
$\partial(M_j/M_{j+1})\leq n$ for all $j=0,\dots,n-1$. 
If we can show $\partial(\Tor_i^C(M_j/M_{j+1},S)_B)\leq 
\partial(M_j/M_{j+1})$ for all $j$, then it follows that 
$\partial(\Tor_i^C(M,S)_B)\leq n$ by the exactness of $\partial$ 
and the long exact sequence of $\Tor_i^C$. Hence we may assume that 
$M$ itself is $\Kdim$-critical. Now by \cite[Lemma 2.1]{SZ}(ii), 
$M_C$ is a submodule of $C/{\mathfrak p}$ for
some prime ideal ${\mathfrak p}\subseteq C$. Replacing $M$ by
$M^{\oplus s}$ for some $s$, we can assume that $M_C$ is
essential in $T:=C/{\mathfrak p}$. Then $\partial (T/M)<n$.
So $\partial (M)=\partial (T)=n$ by the exactness. By
induction hypothesis, for all $i$,
$$\partial (\Tor^C_i((T/M), S))\leq \partial (T/M)< n.$$
By using the long exact sequence of $\Tor^C_i$ again,
$$\partial (\Tor^C_i(M,S))\leq \max\{n-1, \partial (\Tor^C_i(T, S))\}_{i\geq 0}.$$
It remains to show that $\partial (\Tor^C_i(T, S))\leq n$
for all $i$. Since $C$ is noetherian, $T$ and $S$ are f.g. on both sides,
the $(C,B)$-bimodule $W:=\Tor^C_i(T,S)$ is f.g. on both sides.
By the symmetry of $\partial$,
$$\partial (W_B)=\partial (_CW)\leq \partial (_CT)=\partial (T_C)=n.$$
as required.
\end{proof}

For the rest of this section we take $\partial=\Kdim$. Following
\cite{SZ}, an (${\mathbb N}$-graded) algebra $A$ is called
{\it {\rm{(}}graded{\rm{)}} injectively smooth} if $\injdim A=n<\infty$ and
$\Ext^n_A(S,A)\neq 0$ for all simple right (graded) $A$-modules $S$.
We will use the following result.

\begin{lemma}
\label{xxlem3.2}
Let $A$ be noetherian and PI. The following hold.
\begin{enumerate}
\item[(1)]
\cite[Theorem 1.3]{SZ}
Let $A$ be a {\rm{(}}graded{\rm{)}} injectively smooth algebra. Then $A$ is $\Kdim$-CM.
\item[(2)]
\cite[Corollary 3.13]{SZ}
If $A$ is local of finite injective dimension, then $A$ is $\Kdim$-CM.
Consequently, $A$ is injectively smooth.
\item[(3)]
\cite[Theorem 1.1]{SZ}
If $A$ is connected graded of finite injective dimension, then $A$ is CM.
Consequently, $A$ is injectively smooth.
\end{enumerate}
\end{lemma}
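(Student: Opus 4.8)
All three parts are quoted verbatim from \cite{SZ}, so the cleanest route is to invoke \cite[Theorem 1.3]{SZ}, \cite[Corollary 3.13]{SZ} and \cite[Theorem 1.1]{SZ} directly; I will nonetheless sketch the arguments underlying those results in case one wishes to reconstruct them. The unifying principle is that a noetherian PI algebra of finite injective dimension is Auslander--Gorenstein, and that for such an algebra the Krull dimension (respectively the GK-dimension in the connected graded case) is the canonical dimension function realizing the Cohen--Macaulay equality $j(M)+\partial(M)=\partial(A)$. The plan is therefore to prove the general statement (1) first and then to deduce (2) and (3) by checking that their hypotheses force injective smoothness.

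For (1), I would first establish $\injdim A=\Kdim A=:n$. Since $A$ is PI and noetherian, one reduces to the commutative situation through the center and the finiteness of $A$ over an affine central subring (generic flatness, Artin--Procesi), and imports the Gorenstein identity $\injdim=\Kdim$ from commutative algebra. The Cohen--Macaulay equality $j(M)+\Kdim M=n$ then splits into two inequalities. The bound $j(M)+\Kdim M\le n$ I would obtain from a prime filtration of $M$ together with the symmetry of $\Kdim$ over the pair $(A,A)$ (Lemma \ref{xxlem1.3}(2)), reducing to the prime factors $A/\mathfrak p$. For the reverse bound $j(M)+\Kdim M\ge n$ I would induct on $\Kdim M$: the base case is a simple module $S$, where injective smoothness gives $\Ext^{n}_A(S,A)\ne 0$ and hence $j(S)\le n$, while finite injective dimension forces $\Ext^{i}_A(S,A)=0$ for $i<n$, so that $j(S)=n=n-\Kdim S$; the inductive step uses the long exact $\Ext$-sequence attached to a short exact sequence $0\to N\to M\to M/N\to 0$ with $\Kdim N<\Kdim M$, again invoking the exactness of $\Kdim$ (Lemma \ref{xxlem1.3}(1)).

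For (2) and (3) the task reduces to verifying injective smoothness and then quoting (1). In the local case, the Bass-type theorem for noetherian local rings of finite injective dimension yields $\injdim A=\Kdim A=n$ together with the nonvanishing of the top cohomology of the unique simple module $A/\fm$, i.e.\ $\Ext^{n}_A(A/\fm,A)\ne 0$; as every simple right $A$-module occurs in $A/\fm$, injective smoothness follows and (1) applies. In the connected graded case, I would pass to the graded local cohomology of the augmentation ideal and use the Artin--Schelter--Gorenstein criterion to see that finite injective dimension forces the graded top $\Ext$ of the trivial module to be one-dimensional, giving graded injective smoothness and hence the Cohen--Macaulay equality for $\GKdim$ via the graded analogue of (1).

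The main obstacle is the input $\injdim A=\Kdim A$ together with the Gorenstein pattern of Bass numbers: controlling $\Ext^{i}_A(S,A)$ for simple $S$ over a noncommutative PI ring requires the full strength of the localization and symmetry machinery of \cite{SZ, MR}. Concretely, it is the nonvanishing $\Ext^{n}_A(S,A)\ne 0$ for all simple $S$ (injective smoothness), paired with the vanishing of the lower $\Ext$ groups, that has to be pinned down before the clean grade equality can be propagated from simple modules to all finitely generated modules by the induction above.
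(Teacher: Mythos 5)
The paper offers no proof of this lemma at all---it is stated purely as a citation of \cite[Theorem 1.3]{SZ}, \cite[Corollary 3.13]{SZ} and \cite[Theorem 1.1]{SZ}---so your primary move of invoking those three results directly is exactly the paper's approach. Your supplementary sketches of the arguments inside \cite{SZ} are a reasonable reconstruction (and you rightly flag that the vanishing of the lower $\Ext^i_A(S,A)$ for simple $S$ is the genuinely hard input there, not a formal consequence of finite injective dimension), but they are not needed to match the paper.
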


We have the following version of the Auslander theorem.

\begin{theorem}
\label{xxthm3.3}
Let $R$ be a noetherian PI and $\Kdim$-CM algebra of $\Kdim \geq 2$
and $H$ a semisimple Hopf algebra acting on $R$.
Then the following are equivalent.
\begin{enumerate}
\item[(1)]
There is a natural isomorphism of algebras $R\# H\cong \End_{R^H}(R)$.
\item[(2)]
$\Pty_{\Kdim}(R,H)\geq 2$.
\item[(3)]
The $H$-action on $R$ is  h.small.
\end{enumerate}
\end{theorem}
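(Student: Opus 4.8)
The plan is to apply Theorem~\ref{xxthm2.3} to the pair $(A,B)=(R^H,\,R\#H)$ with idempotent $e=1\#\inth$ and dimension function $\partial=\Kdim$, and then to translate its equivalent conditions (ii) and (iii) into the statements (1), (2), (3). Under the identifications of Hypothesis~\ref{xxhyp2.4}(3) (from \cite[Lemma 3.1]{BHZ}) we have $eBe=R^H$ and $Be=R$, so the map $\varphi$ of \eqref{E2.1.1} is precisely the natural map $R\#H\to\End_{R^H}(R)$. I will write $d:=\Kdim R\ (\geq 2)$, and note in passing that $B/R$-heredity forces $\Kdim(R\#H)=d$ as well, so this $d$ agrees with the $\partial(B)$ appearing in Theorem~\ref{xxthm2.3}.

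The first substantive point is that every algebra in sight is noetherian PI, which is what makes the $\Kdim$ machinery of Section~\ref{xxsec3} available. Here $R$ is noetherian PI by hypothesis, and $R^H\subseteq R$ is therefore noetherian PI. For $R\#H$: it is noetherian, being finitely generated on each side over the noetherian ring $R$; and since $R\#H$ is free of rank $n:=\rk_\Bbbk H$ as a \emph{left} $R$-module, the right multiplication action realizes $(R\#H)^{\op}$ as a subalgebra of $\End_R({}_R(R\#H))\cong M_n(R^{\op})$, which is PI because $R^{\op}$ is; hence $R\#H$ is PI. With this in hand I would verify Hypothesis~\ref{xxhyp2.4}(1--6): (1)--(3) are immediate from the above and \cite[Lemma 3.1]{BHZ}; for (4), $\Kdim$ is exact by Lemma~\ref{xxlem1.3}(1) and $\partial(R)=d\geq2$; (6) is the hypothesis that $R$ is $\Kdim$-CM. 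The one step requiring the PI theory is (5), the $B/R$-heredity: since $R\subseteq R\#H$ with $(R\#H)_R$ finitely generated, $\Kdim(R\#H)<\infty$, and $\Kdim$ symmetric over noetherian PI algebras by Lemma~\ref{xxlem1.3}(2), Lemma~\ref{xxlem3.1}(1) gives $\Kdim(M_R)=\Kdim(M_{R\#H})$ for every finitely generated right $R\#H$-module $M$. Now Lemma~\ref{xxlem2.6} yields Hypothesis~\ref{xxhyp2.1}(1--7), and Proposition~\ref{xxpro2.5} shows $R\#H$ is $\Kdim$-CM.

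It then remains to check the torsitivity condition \eqref{E2.3.1}, that $\Kdim$ satisfies $\gamma_{d-2,1}(eB)$. Applying Lemma~\ref{xxlem3.1}(2) with the roles of its algebras played by $R^H$ and $R\#H$ shows $\Kdim$ is $(R^H,R\#H)_\infty$-torsitive; since $eB$ is an $(R^H,R\#H)$-bimodule finitely generated on both sides (Hypothesis~\ref{xxhyp2.1}(5)), this gives $\Kdim(\Tor^{R^H}_j(N,eB)_{R\#H})\leq\Kdim(N)$ for every $j$ and every finitely generated right $R^H$-module $N$, which is stronger than $\gamma_{d-2,1}(eB)$. Theorem~\ref{xxthm2.3} then applies and yields the equivalence of its conditions (ii) ``$\varphi$ is an isomorphism'' and (iii) ``$\Kdim(B/BeB)\leq d-2$''.

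Finally I would translate. Condition (ii) is exactly (1), since $\varphi\colon R\#H\to\End_{R^H}(R)$. For (2), the two-sided ideal $I$ of Definition~\ref{xxdef0.1}(1) is $I=BeB$, so $\Pty_{\Kdim}(R,H)=d-\Kdim(B/BeB)$, whence $\Pty_{\Kdim}(R,H)\geq2$ is equivalent to (iii). For (3), set $M:=B/BeB$ viewed as a right $R$-module: the $\Kdim$-CM property of $R$ gives $j(M)+\Kdim(M_R)=d$ (with $j(0)=\infty$ in the degenerate case $M=0$), while $B/R$-heredity gives $\Kdim(M_R)=\Kdim(B/BeB)$, so h.\,smallness $j(M)\geq2$ is again equivalent to (iii). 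Thus (1) $\Leftrightarrow$ (2) $\Leftrightarrow$ (3). The two genuinely load-bearing ingredients are the PI-ness of $R\#H$ (which unlocks the Krull symmetry, heredity, and torsitivity of Section~\ref{xxsec3} for the non-central extension $R\subseteq R\#H$) and the use of the $\Kdim$-CM hypothesis to convert the homological grade $j$ in the definition of h.\,smallness into the Krull dimension of $B/BeB$; I expect the main friction to be the careful bookkeeping around (5) and \eqref{E2.3.1}, the rest being a direct assembly of the cited results.
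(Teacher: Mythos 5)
Your proposal is correct and takes essentially the same route as the paper's own proof: verify Hypothesis~\ref{xxhyp2.4}(1--6) (with item (5) from Lemmas~\ref{xxlem1.3}(1) and \ref{xxlem3.1}(1)), invoke Lemma~\ref{xxlem2.6} to obtain Hypothesis~\ref{xxhyp2.1}, use Lemma~\ref{xxlem3.1}(2) to get $(\Kdim$ being $(A,B)_\infty$-torsitive and hence$)$ condition \eqref{E2.3.1}, and then apply Theorem~\ref{xxthm2.3}, translating its condition (iii) into both (2) and (3) via the $\Kdim$-CM property of $R$. The only difference is that you spell out details the paper leaves implicit --- notably the PI-ness of $R\#H$ via the embedding of $(R\#H)^{\op}$ into $M_n(R^{\op})$, and the final bookkeeping identifying $\Pty_{\Kdim}(R,H)\geq 2$ and h.smallness with $\Kdim(B/BeB)\leq d-2$ --- which is correct and harmless.
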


\begin{proof} First we check Hypothesis \ref{xxhyp2.4}(1-6).
(1)-(4) are clear by taking $\partial=\Kdim$. Item (5)
follows from Lemmas \ref{xxlem1.3}(1) and
\ref{xxlem3.1}(1). (6) is a hypothesis. By Lemma
\ref{xxlem2.6}, Hypothesis \ref{xxhyp2.1} holds.
By Lemma \ref{xxlem3.1}(2), $\Kdim$ is
$(A,B)_{\infty}$-torsitive. Therefore \eqref{E2.3.1}
holds. By Theorem \ref{xxthm2.3}, (1) is equivalent to
(2). Since $R$ is $\Kdim$-CM, (2) is equivalent to (3).
\end{proof}

Now we are ready to prove Theorem \ref{xxthm0.2}.
Recall that $\Kdim M=\GKdim M$ if $M$ is a f.g. right module
over an affine noetherian PI algebra \cite[Lemma 4.3(i)]{SZ}.

\begin{proof}[Proof of Theorem \ref{xxthm0.2}]
When $R$ is PI local or connected graded of finite
injective dimension, $R$ is $\Kdim$-CM by Lemma \ref{xxlem3.2}.
The assertion follows by Theorem \ref{xxthm3.3}.
\end{proof}

We consider some explicit examples. The next is an immediate
consequence of Theorem \ref{xxthm3.3}.

The skew polynomial ring $\Bbbk_{p_{ij}}[x_1,\cdots,x_n]$ is defined
in Example \ref{xxex5.2}.

\begin{corollary}
\label{xxcor3.4}
Let $p_{ij}$ be roots of unity for all $1\leq i<j\leq n$ and
let $R$ be the skew polynomial ring $\Bbbk_{p_{ij}}[x_1,\cdots,x_n]$.
Suppose $H$ is a semisimple Hopf algebra acting on $R$. Then
the following are equivalent.
\begin{enumerate}
\item[(1)]
There is a natural isomorphism of algebras $R\# H\cong \End_{R^H}(R)$.
\item[(2)]
$\Pty (R,H)\geq 2$.
\item[(3)]
The $H$-action on $R$ is  h.small.
\end{enumerate}
\end{corollary}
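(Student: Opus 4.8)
The plan is to deduce this corollary directly from Theorem \ref{xxthm3.3} by verifying its two standing hypotheses for the skew polynomial ring $R=\Bbbk_{p_{ij}}[x_1,\dots,x_n]$ when all the parameters $p_{ij}$ are roots of unity. Theorem \ref{xxthm3.3} asserts the equivalence of (1), (2), and (3) for any noetherian PI algebra $R$ that is $\Kdim$-CM with $\Kdim R\geq 2$, equipped with a semisimple Hopf action; so the only work is to check that our specific $R$ falls into this class. The three items to establish are that $R$ is (i) noetherian PI, (ii) $\Kdim$-CM, and (iii) of Krull dimension at least $2$.

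First I would handle the PI and noetherian conditions. The skew polynomial ring $\Bbbk_{p_{ij}}[x_1,\dots,x_n]$ is an iterated Ore extension, hence noetherian. When every $p_{ij}$ is a root of unity, $R$ satisfies a polynomial identity: this is the standard fact that a skew polynomial ring with root-of-unity parameters is module-finite over a central (commutative) polynomial subring --- concretely, if $m$ is a common order so that $p_{ij}^{m}=1$ for all $i<j$, then the powers $x_1^{m},\dots,x_n^{m}$ are central and $R$ is a finitely generated module over the commutative polynomial subalgebra they generate, so $R$ is PI. This simultaneously gives the PI hypothesis and, via Lemma \ref{xxlem3.2}, a route to the CM property.

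Next, for the $\Kdim$-CM condition I would invoke Lemma \ref{xxlem3.2}(3). Since $R$ is connected graded (it is $\mathbb{N}$-graded with $R_0=\Bbbk$, being generated in degree one) and of finite injective dimension --- indeed $R$ is Artin--Schelter regular of global dimension $n$, being an iterated Ore extension of the base field, so $\injdim R = n < \infty$ --- Lemma \ref{xxlem3.2}(3) yields that $R$ is CM, and hence $\Kdim$-CM after noting that $\Kdim$ and $\GKdim$ agree on finitely generated modules over an affine noetherian PI algebra by \cite[Lemma 4.3(i)]{SZ}. Finally, $\Kdim R = \GKdim R = n \geq 2$ holds by hypothesis since the statement assumes $n\geq 2$ (the corollary is stated for the ring of Corollary \ref{xxcor3.4} with the ambient $n\geq 2$ as in the related Theorem \ref{xxthm0.8}); I would note that for $n\geq 2$ the dimension bound $\Kdim R\geq 2$ is immediate, while the degenerate case $n\leq 1$ is excluded.

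The main obstacle I anticipate is not any deep argument but rather the careful verification that the PI property and the finite injective dimension hypotheses of Lemma \ref{xxlem3.2}(3) are genuinely met in the graded setting, and that the semisimple Hopf action is \emph{not} assumed to be homogeneous here (as emphasized after Theorem \ref{xxthm0.2}), so that the $\Kdim$-based Theorem \ref{xxthm3.3}, rather than the graded Theorem \ref{xxthm2.7}, is the correct tool to apply. Once these checks are in place, the equivalence of (1), (2), and (3) transfers verbatim from Theorem \ref{xxthm3.3}, completing the proof.
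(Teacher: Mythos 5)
Your proposal is correct and follows essentially the same route as the paper: the paper's proof simply states that $R$ is CM (``well-known''), invokes Theorem \ref{xxthm3.3}, and uses the equality $\Kdim=\GKdim$ for affine noetherian PI algebras to pass between $\Pty_{\Kdim}$ and $\Pty$. You merely fill in the details the paper leaves implicit --- the PI property via centrality of the $m$-th powers $x_i^m$, and the CM property via Lemma \ref{xxlem3.2}(3) applied to the connected graded AS regular ring $R$ --- which is a faithful elaboration of the same argument.
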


\begin{proof} It is well-known that $R$ is CM. The assertion
follows from Theorem \ref{xxthm3.3} and the fact $\Kdim=\GKdim$.
\end{proof}

To study other examples, we need to consider filtered algebras.
Let $B$ be a filtered algebra with an ${\mathbb N}$-filtration
${\mathcal F}$:
$$0\subseteq F_0 B\subseteq F_1B\subseteq\cdots \subseteq
F_nB\subseteq\cdots.$$
For an element $e\in F_0B$, let $\overline{B}$ be the
factor ring $B/(e)$ where $(e)$ is the ideal of $B$
generated by $e$. Let $\pi:B\to \overline{B}$ be the
canonical projection map. Then $\overline{B}$ is also
a filtered algebra with the filtration $\overline{\mathcal F}$
induced from ${\mathcal F}$. Let $\gr e$ be the element in
$\gr_{\mathcal F} B$ corresponding to $e$, which has degree
0 as $e\in F_0B$. The following observation is obvious.

\begin{lemma}
\label{xxlem3.5}
The projection map induces an epimorphism of graded algebras
$$(\gr_{\mathcal F} B)/(e')\to \gr_{\overline{\mathcal F}} \overline{B},$$
where $e':=\gr e$ is viewed as an element of degree $0$ in $\gr_{\mathcal F} B$.
\end{lemma}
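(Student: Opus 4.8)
The plan is to realize the desired map as the associated graded of the filtered homomorphism $\pi$, verify its surjectivity degree by degree, and then observe that the symbol $e'$ sits in its kernel, so that it descends to the quotient by $(e')$. First I would record that $\pi\colon B\to\overline{B}$ is a filtered algebra homomorphism for the pair $(\mathcal{F},\overline{\mathcal{F}})$: by the definition of the induced quotient filtration one has $F_i\overline{B}=\pi(F_iB)=(F_iB+(e))/(e)$, so in particular $\pi(F_iB)\subseteq F_i\overline{B}$ and $\pi(F_{i-1}B)\subseteq F_{i-1}\overline{B}$. Consequently $\pi$ induces a graded algebra homomorphism
$$\gr\pi\colon \gr_{\mathcal F}B\longrightarrow \gr_{\overline{\mathcal F}}\overline{B}$$
whose degree-$i$ component is the well-defined map $F_iB/F_{i-1}B\to F_i\overline{B}/F_{i-1}\overline{B}$ carrying the class of $b$ to the class of $\pi(b)$.

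Next I would check surjectivity and the factorization. Since $F_i\overline{B}=\pi(F_iB)$ holds on the nose, the map $F_iB\to F_i\overline{B}$ is onto, hence so is the induced map on the degree-$i$ graded pieces; therefore $\gr\pi$ is a graded epimorphism. To locate $e'$ in the kernel, note that $e'=\gr e$ is the class of $e$ in $F_0B/F_{-1}B=F_0B$, and $\gr\pi$ sends it to the class of $\pi(e)=0$, which is zero. As $\gr\pi$ is an algebra map, the two-sided ideal $(e')\subseteq\gr_{\mathcal F}B$ lies in $\ker(\gr\pi)$, so $\gr\pi$ factors through the canonical quotient $\gr_{\mathcal F}B\to(\gr_{\mathcal F}B)/(e')$, yielding a graded algebra homomorphism $(\gr_{\mathcal F}B)/(e')\to\gr_{\overline{\mathcal F}}\overline{B}$ that remains surjective because $\gr\pi$ is.

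The only point worth emphasizing is that the statement claims merely an \emph{epimorphism} and not an isomorphism, and this is appropriate: passing to associated graded is not exact, so $\ker(\gr\pi)$ is generated by the symbols of \emph{all} elements of the ideal $(e)\subseteq B$, which may be strictly larger than the ideal generated by the single symbol $e'$. Thus there is no genuine obstacle here; the content is entirely the routine functoriality of $\gr$ on filtered maps together with the surjectivity built into the definition of the induced quotient filtration, which is exactly why the lemma can be asserted as obvious.
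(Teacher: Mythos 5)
Your proof is correct and follows essentially the same route as the paper's: both construct the induced graded map from the filtered projection $\pi$, note its surjectivity degree by degree via $F_i\overline{B}=\pi(F_iB)$, observe that $e'$ maps to zero, and conclude by factoring through $(\gr_{\mathcal F}B)/(e')$. Your closing remark correctly explains why only an epimorphism (not an isomorphism) can be claimed, which the paper leaves implicit.
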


\begin{proof} First of all, we have a surjective homomorphism
$\phi: \gr_{\mathcal F} B\to \gr_{\overline{\mathcal F}} \overline{B}$ induced
by the surjection
$$F_i B/F_{i-1} B\to (F_i B+(e))/(F_{i-1}+(e))$$
for all $i$. It is clear that $\phi$ maps $e'$ to 0.
The assertion follows.
\end{proof}

\begin{proposition}
\label{xxpro3.6}
Let $R$ be a filtered algebra with an ${\mathbb N}$-filtration
${\mathcal F}$ such that $\gr_{\mathcal F} R$ is locally finite
and noetherian. Assume that both $R$ and $\gr_{\mathcal F} R$
are CM. Let $H$ be a semisimple Hopf algebra acting on
$R$. Suppose that the $H$-action
on $R$ preserves the filtration ${\mathcal F}$. Then
$$\Pty(R,H)\geq \Pty(\gr_{\mathcal F} R, H).$$
As a consequence, if the Auslander theorem holds for
the $H$-action on $\gr_{\mathcal F} R$ and if
$\gamma_{d-2,1}({_{R^H}R_{R\# H}})$ holds,
where $d=\GKdim R$,
then the Auslander theorem holds for the $H$-action on $R$.
\end{proposition}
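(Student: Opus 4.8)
The plan is to carry the pertinency computation from $R$ down to $\gr_{\mathcal F}R$ through the smash product, and then feed the resulting inequality into the equivalences of Theorems \ref{xxthm2.7} and \ref{xxthm2.3}. First I would filter $B:=R\# H$ by $F_iB:=(F_iR)\# H$. Because the $H$-action preserves $\mathcal F$, the smash-product multiplication satisfies $F_iB\cdot F_jB\subseteq F_{i+j}B$, so this is an exhaustive $\mathbb N$-filtration, and one checks directly that $\gr_{\mathcal F}B=(\gr_{\mathcal F}R)\# H$, with $H$ acting through the induced action on $\gr_{\mathcal F}R$. The idempotent $e=1\#\inth$ lies in $F_0B$, and its symbol is $e'=\gr e=1\#\inth\in\gr_{\mathcal F}B$, which is exactly the idempotent defining $\Pty(\gr_{\mathcal F}R,H)$.

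For the inequality I would set $\overline B:=B/(e)=(R\# H)/I$ and apply Lemma \ref{xxlem3.5} to the pair $(B,e)$, obtaining a surjection of graded algebras $(\gr_{\mathcal F}B)/(e')=((\gr_{\mathcal F}R)\# H)/I'\twoheadrightarrow\gr_{\overline{\mathcal F}}\overline B$, where $I'$ is the ideal of $(\gr_{\mathcal F}R)\# H$ generated by $1\#\inth$. A surjection can only drop GK-dimension, so $\GKdim\gr_{\overline{\mathcal F}}\overline B\le\GKdim(((\gr_{\mathcal F}R)\# H)/I')$. Since $\gr_{\mathcal F}R$, hence $\gr_{\mathcal F}B$ and its quotient $\gr_{\overline{\mathcal F}}\overline B$, are noetherian and locally finite, the filtered--graded comparison (\cite[Proposition 6.6]{KL}, as used in the proof of Lemma \ref{xxlem1.2}(3)) gives $\GKdim\overline B=\GKdim\gr_{\overline{\mathcal F}}\overline B$ and $\GKdim R=\GKdim\gr_{\mathcal F}R$. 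Combining these yields
\begin{align*}
\Pty(R,H)&=\GKdim R-\GKdim((R\# H)/I)\\
&\ge\GKdim\gr_{\mathcal F}R-\GKdim(((\gr_{\mathcal F}R)\# H)/I')=\Pty(\gr_{\mathcal F}R,H),
\end{align*}
which is the first assertion. Note the CM hypotheses are not yet needed here.

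For the consequence I would argue as follows. Since $\gr_{\mathcal F}R$ is CM, locally finite graded, with $\GKdim\ge 2$, the assumption that the Auslander theorem holds for the $H$-action on $\gr_{\mathcal F}R$ is, by Theorem \ref{xxthm2.7}, equivalent to $\Pty(\gr_{\mathcal F}R,H)\ge 2$. The inequality just proved then forces $\Pty(R,H)\ge 2$, i.e.\ $\GKdim(B/(BeB))=\GKdim((R\# H)/I)\le d-2$. It remains to run Theorem \ref{xxthm2.3} for the pair $(R^H,R\# H)$ with $\partial=\GKdim$: Hypothesis \ref{xxhyp2.4} holds because $R$ is noetherian and $\partial$-CM with $\GKdim R=d\ge 2$, while $\GKdim$ is exact and $B/R$-hereditary by Lemma \ref{xxlem1.2}(1,3) (using the filtrations on $R$ and $B$ with finitely generated associated graded), and Lemma \ref{xxlem2.6} upgrades this to Hypothesis \ref{xxhyp2.1}. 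The hypothesis $\gamma_{d-2,1}({}_{R^H}R_{R\# H})$ is precisely condition \eqref{E2.3.1}, namely $\gamma_{d-2,1}(eB)$, under the identification $eB\cong{}_{R^H}R_{R\# H}$. Thus Theorem \ref{xxthm2.3} applies, and the bound $\GKdim(B/(BeB))\le d-2$ gives that $\varphi\colon R\# H\to\End_{R^H}(R)$ is an isomorphism, i.e.\ the Auslander theorem holds for $R$.

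The main obstacle I anticipate is the descent through the associated graded at the level of the defining ideal: Lemma \ref{xxlem3.5} only furnishes a surjection $((\gr_{\mathcal F}R)\# H)/I'\twoheadrightarrow\gr_{\overline{\mathcal F}}\overline B$ and not an isomorphism, so one genuinely obtains only an inequality of pertinencies (the graded side may be smaller), and this is the source of the one-directional conclusion. One must also check carefully that both filtered--graded GK-dimension identifications apply, which hinges on $\gr_{\mathcal F}R$ being noetherian and locally finite so that every associated graded ring in sight is finitely generated. Verifying the remaining items of Hypothesis \ref{xxhyp2.4} for the possibly non-graded filtered ring $R$ --- in particular exactness of $\GKdim$ on $R^H$-modules --- is routine but is exactly where the filtration hypotheses are used.
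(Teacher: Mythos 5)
Your proposal is correct and follows essentially the same route as the paper: filter $B=R\#H$ by $F_iB=(F_iR)\#H$, apply Lemma \ref{xxlem3.5} to get the surjection $((\gr_{\mathcal F}R)\#H)/(e')\twoheadrightarrow\gr_{\overline{\mathcal F}}\overline B$, compare GK-dimensions across the filtered--graded passage via \cite[Proposition 6.6]{KL}, and then derive the consequence from Theorems \ref{xxthm2.7} and \ref{xxthm2.3} together with Lemma \ref{xxlem2.6}. The only difference is that the paper compresses the second half into a one-line citation, whereas you spell out the verification of Hypotheses \ref{xxhyp2.4} and \ref{xxhyp2.1} and the identification $\gamma_{d-2,1}(eB)=\gamma_{d-2,1}({}_{R^H}R_{R\#H})$ explicitly, which is exactly what that citation entails.
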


\begin{proof}
First of all, $H$ acts on $\gr_{\mathcal F} R$ naturally.
Since $\gr_{\mathcal F} R$ is locally finite and noetherian,
$\GKdim \gr_{\mathcal F} R=\GKdim R$ by \cite[Proposition 6.6]{KL}.

Let $e:=1\# \inth\in R\# H$ where $\inth$ is the integral of $H$ and let
$e':=1\# \inth\in (\gr_{\mathcal F} R)\# H$. We define a filtration
${\mathcal F}'$ on $B:=R\# H$ by
$$F'_i B= (F_i R)\# H$$
for all $i\geq 0$. Then the associated graded ring of $B$ is
$$\gr_{{\mathcal F}'} B=\gr_{{\mathcal F}'} (R\#H)
\cong (\gr_{\mathcal F} R)\# H.$$
Let $\overline{B}=B/(e)$ and $\overline{\mathcal F}$ be the
filtration on $\overline{B}$ induced by the filtration ${\mathcal F}'$.
Now we have
$$\begin{aligned}
\GKdim ((\gr_{\mathcal F} R)\# H)/(e'))&=\GKdim
\left((\gr_{{\mathcal F}'} B)/(e')\right)\\
&\geq \GKdim \gr_{\overline{\mathcal F}} (B/(e))  
 \qquad {\text{by Lemma \ref{xxlem3.5}}}\\
&=\GKdim \gr_{\overline{\mathcal F}} (\overline{B})\\
&=\GKdim \overline{B}
\end{aligned}
$$
where the last equation follows by \cite[Proposition 6.6]{KL}. Therefore
$$\begin{aligned}
\Pty(R,H)&=\GKdim R-\GKdim (R\# H)/(e)=\GKdim R-\GKdim \overline{B}\\
&=\GKdim \gr_{\mathcal F} R-\GKdim \overline{B}\\
&\geq \GKdim \gr_{\mathcal F} B-\GKdim ((\gr_{\mathcal F} R)\# H)/(e'))\\
&=\Pty(\gr_{\mathcal F} R,H).
\end{aligned}
$$
The consequence follows from Theorems
\ref{xxthm2.3} and \ref{xxthm2.7}, and Lemma \ref{xxlem2.6}.
\end{proof}

In the next example, $R$ is neither local nor connected graded.

\begin{corollary}
\label{xxcor3.7}
Suppose ${\rm{char}}\; \Bbbk \nmid 2n$.
Let $R$ be the quantum Weyl algebra
generated by $x_1,\cdots,x_n$ and
subject to the relations $x_ix_j+x_jx_i=1$
for all $i\neq j$. Let $G$ be the group
generated by $\sigma: x_i\to x_{i+1}$
for all $i<n$ and $x_n\to x_1$. Then
$\Pty(R,G)\geq 2$ and $R\ast G\cong
\End_{R^G}(R)$.
\end{corollary}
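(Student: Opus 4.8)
The plan is to apply Proposition \ref{xxpro3.6} with a suitable filtration on the quantum Weyl algebra $R$, reducing the Auslander theorem for $R$ to the already-understood graded case. First I would equip $R$ with the standard ascending filtration ${\mathcal F}$ in which each generator $x_i$ is assigned degree $1$. Since the defining relations $x_ix_j+x_jx_i=1$ have the form (degree-$2$ leading term) $=$ (degree-$0$ term), the associated graded algebra $\gr_{\mathcal F} R$ is the skew polynomial ring with relations $x_ix_j+x_jx_i=0$, i.e.\ $x_ix_j=-x_jx_i$, which is the $(-1)$-skew polynomial ring $\Bbbk_{p_{ij}}[x_1,\dots,x_n]$ with $p_{ij}=-1$. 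This $\gr_{\mathcal F} R$ is connected graded, noetherian, locally finite, and Cohen-Macaulay; likewise $R$ itself is noetherian and CM, so the standing hypotheses of Proposition \ref{xxpro3.6} on $R$ and $\gr_{\mathcal F} R$ are met.

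Next I would verify that the $G$-action respects the setup. The cyclic permutation $\sigma\colon x_i\mapsto x_{i+1}$ (indices mod $n$) preserves the relations and hence is a genuine automorphism of $R$; it is visibly filtered (it permutes the degree-one generators), so the $H=\Bbbk G$-action preserves ${\mathcal F}$, and the induced action on $\gr_{\mathcal F} R$ is the same cyclic permutation acting on the $(-1)$-skew polynomial ring. The hypothesis ${\rm char}\,\Bbbk \nmid 2n$ guarantees both that $\Bbbk G$ is semisimple (as $\ch\Bbbk\nmid |G|$, and $|G|$ divides $n$) and that the characteristic does not interfere with the $2$ appearing in the relations under mod-$p$ reduction.

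The crux is to establish the Auslander theorem for the $G$-action on the graded algebra $\gr_{\mathcal F} R$, together with the condition $\gamma_{d-2,1}({}_{R^H}R_{R\#H})$ with $d=\GKdim R=n$. For the graded statement I would apply Corollary \ref{xxcor3.4}: the roots of unity hypothesis there is satisfied since every $p_{ij}=-1$, so by that corollary the Auslander theorem for the $G$-action on $\gr_{\mathcal F} R$ is equivalent to $\Pty(\gr_{\mathcal F} R,G)\geq 2$, and one checks the latter by confirming that the cyclic $G$-action is inner-faithful and \emph{small} (it contains no pseudo-reflection, since a nontrivial power of a full cyclic permutation fixes no hyperplane). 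For the $\gamma_{d-2,1}$ condition, I would invoke $(A,B)_{\infty}$-torsitivity of $\GKdim$ in the graded setting, Lemma \ref{xxlem1.2}(6), exactly as in the proof of Theorem \ref{xxthm3.3}. The main obstacle I anticipate is the bookkeeping needed to transfer torsitivity from the graded algebra to the filtered algebra $R$ so that $\gamma_{d-2,1}({}_{R^H}R_{R\#H})$ genuinely holds for $R$ itself rather than merely for $\gr_{\mathcal F} R$; this is where the interplay between the filtration on $R\#H$ and the $(A,B)$-torsitivity estimates must be handled with care. Once $\Pty(\gr_{\mathcal F} R,G)\geq 2$ and the $\gamma$-condition are in hand, Proposition \ref{xxpro3.6} yields $\Pty(R,G)\geq \Pty(\gr_{\mathcal F} R,G)\geq 2$ and the desired isomorphism $R\ast G\cong \End_{R^G}(R)$.
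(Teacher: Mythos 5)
Your overall strategy (filter $R$ so that $\gr_{\mathcal F} R\cong \Bbbk_{-1}[x_1,\dots,x_n]$, then apply Proposition \ref{xxpro3.6}) matches the paper, but the step where you establish $\Pty(\gr_{\mathcal F} R,G)\geq 2$ is broken. You deduce it from the claim that the cyclic group $G$ is inner-faithful and \emph{small} in ${\rm GL}(V)$. First, the smallness claim itself is false when $n=2$: there $\sigma$ is the transposition $x_1\leftrightarrow x_2$, with eigenvalues $1$ and $-1$, hence a pseudo-reflection (in general $\sigma^k$ has exactly $\gcd(k,n)$ eigenvalues equal to $1$, so non-smallness occurs precisely at $n=2$). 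Second, and more fundamentally, even when $G$ is small this does not give pertinency $\geq 2$ for a \emph{noncommutative} algebra: the equivalence ``small $\Leftrightarrow$ $\Pty\geq 2$'' (Lemma \ref{xxlem4.12}, Lemma \ref{xxlem7.2}) is proved only for the commutative polynomial ring, and the paper's Examples \ref{xxex7.3} and \ref{xxex7.4} exhibit, on $\Bbbk_{-1}[x,y]$ itself, a non-small action that is h.small and a small action that is \emph{not} h.small. So smallness is neither necessary nor sufficient here. The paper's proof instead quotes \cite[Theorem 0.5]{BHZ}, a result specifically about permutation actions on $(-1)$-skew polynomial rings, which directly yields $\Pty(\gr_{\mathcal F}R, G)\geq 2$ (and, notably, covers the $n=2$ case where $G$ is not small); without that input, or an equivalent direct computation of $\GKdim\bigl((\gr_{\mathcal F}R\# \Bbbk G)/(e)\bigr)$, your argument has no way to start.

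The second issue is the endgame. You correctly sense that invoking the ``consequence'' part of Proposition \ref{xxpro3.6} requires $\gamma_{d-2,1}({}_{R^G}R_{R\ast G})$ for the \emph{filtered} algebra $R$, and that Lemma \ref{xxlem1.2}(6) only provides torsitivity in the graded setting; you flag this transfer as an obstacle but do not resolve it, so your proof is incomplete even granting the pertinency bound. The paper sidesteps this entirely: it uses Proposition \ref{xxpro3.6} only for the inequality $\Pty(R,G)\geq \Pty(\gr_{\mathcal F}R,G)$, and then concludes via Theorem \ref{xxthm3.3}. Since $R$ is affine noetherian PI (its associated graded ring is PI) with $\Kdim=\GKdim$ on f.g. modules, and both $R$ and $\gr_{\mathcal F}R$ are CM, Theorem \ref{xxthm3.3} applies with $\partial=\Kdim$; there the $\gamma$-condition is automatic from Lemma \ref{xxlem3.1}(2) ($(A,B)_\infty$-torsitivity of $\Kdim$ for PI algebras), with no graded-to-filtered transfer needed. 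Rerouting your final step through Theorem \ref{xxthm3.3} in this way would close that gap.
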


\begin{proof}
Using the standard filtration defined by
$F_i R=(\Bbbk+\sum_{s=1}^n \Bbbk x_s)^i$,
we have  $\gr_{\mathcal F} R\cong \Bbbk_{-1}
[x_1,\cdots,x_n]$. By \cite[Theorem 0.5]{BHZ},
$\Pty(\gr_{\mathcal F} R, G)\geq 2$.
By Proposition \ref{xxpro3.6}, $\Pty(R,G)\geq
\Pty(\gr_{\mathcal F}R,G)\geq 2$.
Since $R$ is affine PI and noetherian,
$\GKdim M=\Kdim M$ for every f.g. right $R$-module.
It is well-known that both $R$ and $\gr_{\mathcal F} R$
are CM. By Theorem \ref{xxthm3.3}, $R\ast G\cong
\End_{R^G}(R)$.
\end{proof}

\section{Reduction mod-$p$ and universal enveloping algebras}
\label{xxsec4}

The goal of this section is to prove Theorem \ref{xxthm0.4} and
Corollary \ref{xxcor0.5}, which requires quite a bit of preparation.
In this section we assume that $\Bbbk$ is a field
with prime subring $\Bbbk_0={\mathbb Z}$ or ${\mathbb F}_{p}:
={\mathbb Z}/(p)$,  for some prime $p$, inside $\Bbbk$.
Let $D$ denote any $\Bbbk_0$-affine subalgebra
of $\Bbbk$. Such a $D$ is an {\it admissible domain} in the sense of
\cite[p.580]{ArSZ}. Note that if $D$ is a $\Bbbk_0$-affine
subalgebra of $\Bbbk$, so is $D[s^{-1}]$ for any nonzero
element $s\in D$.

\begin{definition}
\label{xxdef4.1} Let $A$ be an algebra over $\Bbbk$ and $M$ a
right $A$-module. Let $D$ be a $\Bbbk_0$-affine subalgebra of $\Bbbk$.
\begin{enumerate}
\item[(1)]
A $D$-subalgebra $A_D$ of $A$ is called an {\it order} of
$A$ if the following hold
\begin{enumerate}
\item[(1a)]
$A_D$ is free over $D$,
\item[(1b)]
$A_D\otimes_D \Bbbk=A$.
\end{enumerate}
\item[(2)]
An $A_D$-submodule $M_D$ of $M$ is called an {\it order} of
$M$ if the following hold
\begin{enumerate}
\item[(2a)]
$M_D$ is free over $D$,
\item[(2b)]
$M_D\otimes_D \Bbbk(=M_D\otimes_{A_D} A)=M$.
\end{enumerate}
\end{enumerate}
\end{definition}

The following lemma is easy. If $D\to F$ is an algebra
homomorphism of commutative rings, and if $A_D$ is an
algebra over $D$, define $A_F$ to be $A_D\otimes_D F$.
The module $M_F$ is defined similarly.

\begin{lemma}
\label{xxlem4.2}
Let $A_D$ and $B_D$ be two $D$-algebras.
Let $N$ be a right $A_D$-module and $M$ be an $(A_D,B_D)$-bimodule
which is $D$-central. Let $j\geq 0$ be an integer.
\begin{enumerate}
\item[(1)]
Assume that $F$ is flat over $D$ {\rm{(}}e.g., $F$ is a localization of $D${\rm{)}}. Then
$$\Tor_i^{A_D}(N,M)\otimes_D F=\Tor_i^{A_F}(N_F, M_F)$$
for all $i$.
\item[(2)]
Suppose $N, M$ and $\Tor_i^{A_D}(N,M)$ are free over $D$ for all
$i\leq j$.
Then, for every algebra homomorphism $D\to F$ of commutative rings,
we have
$$\Tor_i^{A_D}(N,M)\otimes_D F=\Tor_i^{A_F}(N_F, M_F)$$
for all $i\leq j$.
\end{enumerate}
\end{lemma}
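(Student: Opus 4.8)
The plan is to prove both parts by comparing the complex that computes $\Tor^{A_D}_*(N,M)$ with the one that computes $\Tor^{A_F}_*(N_F,M_F)$, and tracking how base change along $D\to F$ interacts with homology. I would fix a resolution $P_\bullet\to N$ of $N$ by free right $A_D$-modules and set $C_\bullet:=P_\bullet\otimes_{A_D}M$, so that $H_i(C_\bullet)=\Tor^{A_D}_i(N,M)$. Two elementary observations drive everything. First, since each $P_i$ is $A_D$-free, the term $(P_i)_F:=P_i\otimes_D F$ is $A_F$-free and there is a natural isomorphism of complexes $C_\bullet\otimes_D F\cong (P_\bullet)_F\otimes_{A_F}M_F$. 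Second, using that $A_D$ is free over $D$ (as an order) and that $M$ is free over $D$, each $C_i$ is, as a $D$-module, a direct sum $M^{(\Lambda_i)}$ of copies of $M$, so $C_\bullet$ is a bounded-below complex of free $D$-modules.

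For part (1) I would use that $F$ is flat over $D$. Then $-\otimes_D F$ is exact, so $(P_\bullet)_F\to N_F$ remains exact and is therefore an $A_F$-free resolution of $N_F$; hence $\Tor^{A_F}_i(N_F,M_F)=H_i\big((P_\bullet)_F\otimes_{A_F}M_F\big)=H_i(C_\bullet\otimes_D F)$. Flatness also lets homology pass through the tensor product, $H_i(C_\bullet\otimes_D F)=H_i(C_\bullet)\otimes_D F=\Tor^{A_D}_i(N,M)\otimes_D F$, which gives the claim for all $i$.

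For part (2) the map $D\to F$ is arbitrary, so neither step is automatic, and this is exactly where the freeness hypotheses on $N$, $M$, and the low-degree $\Tor$'s enter. I would first check that $(P_\bullet)_F$ is still an $A_F$-resolution of $N_F$: breaking $P_\bullet\to N$ into short exact sequences $0\to K_i\to P_i\to K_{i-1}\to 0$ of $D$-modules (with $K_{-1}=N$), the freeness of $N$ and of the $P_i$ over $D$ forces every syzygy $K_i$ to be flat over $D$ by the long exact $\Tor^D$ sequence, so each sequence stays exact after $-\otimes_D F$ and splicing shows $(P_\bullet)_F\to N_F$ is exact; hence again $\Tor^{A_F}_i(N_F,M_F)=H_i(C_\bullet\otimes_D F)$. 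The remaining point is the universal-coefficient-type statement $H_i(C_\bullet\otimes_D F)\cong H_i(C_\bullet)\otimes_D F$ for $i\le j$, which I would prove by an upward induction on degree using the cycle/boundary sequences $0\to B_i\to Z_i\to H_i(C_\bullet)\to 0$ and $0\to Z_i\to C_i\to B_{i-1}\to 0$: the hypothesis that $H_i(C_\bullet)=\Tor^{A_D}_i(N,M)$ is free over $D$ for $i\le j$ propagates flatness of the cycles $Z_i$ and boundaries $B_i$ up through degree $j$, after which tensoring with $F$ is exact and correctly identifies cycles, boundaries, and therefore homology in degrees $\le j$.

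The main obstacle is the part (2) homology computation: without flatness of $F$ one cannot simply commute $-\otimes_D F$ past $H_i$, and the argument succeeds only in the range $i\le j$ precisely because flatness of $Z_i$ and $B_i$ can be propagated no higher than the top degree in which $\Tor^{A_D}_i(N,M)$ is assumed free over $D$. Verifying that the degree-$j$ homology is controlled by the flatness of $B_j$ (which needs $H_j(C_\bullet)$ free) and of $Z_j$ (which needs the flatness cascade from lower degrees), and that the two natural isomorphisms above are compatible, is the only real content; the rest is formal.
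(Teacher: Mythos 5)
Your proof is correct, and it establishes the same two key facts as the paper's proof, but by more elementary means. For part (1) the two arguments are essentially identical in content: the paper writes the formal chain $\Tor_i^{A_D}(N,M)\otimes_D F=\Tor_i^{A_D}(N,M_F)=\Tor_i^{A_F}(N_F,M_F)$ using exactness of $-\otimes_D F$ and flat change of rings, while you carry out the same computation at the level of the resolution $P_\bullet$. The real divergence is in part (2), where the paper invokes the spectral sequences of \cite[Theorem 11.51]{Ro} twice: once as a universal-coefficients argument (using $D$-freeness of $M$ and of $\Tor_i^{A_D}(N,M)$ for $i\le j$) to get $\Tor_i^{A_D}(N,M)\otimes_D F=\Tor_i^{A_D}(N,M_F)$, and once as the change-of-rings spectral sequence, whose collapse follows from the vanishing $\Tor_s^{A_D}(N,A_F)=0$ for $s>0$ (a consequence of $N$ being $D$-free), to get $\Tor_i^{A_D}(N,M_F)=\Tor_i^{A_F}(N_F,M_F)$. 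Your Step A (the syzygies $K_i$ are $D$-flat, hence $(P_\bullet)_F$ remains a resolution of $N_F$) is precisely the chain-level content of that Tor-vanishing and collapse, and your Step B (propagating flatness of cycles and boundaries upward through degree $j$) is a hands-on proof of the truncated universal-coefficients statement, with the correct bookkeeping of why the conclusion cannot be pushed above degree $j$: flatness of $B_j$ needs $H_j(C_\bullet)$ free, and flatness of $Z_j$ needs the cascade from below. What your route buys is self-containedness --- no spectral sequence machinery --- at the cost of length; the paper's citations make the proof three lines. One shared subtlety worth noting: both proofs use that $A_D$ is free (or at least flat) over $D$ --- you explicitly, to conclude the $P_i$ are $D$-free; the paper silently, in the claim $\Tor_s^{A_D}(N,A_F)=0$. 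This hypothesis is not literally in the statement of the lemma, but it holds in every application since $A_D$ is an order, and you were right to flag where it enters.
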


\begin{proof}
(1) Since $M$ is $D$-central, $F\otimes_D M=M\otimes_D F=: M_F$.
Since $F$ is flat over $D$, the functor $-\otimes_D F$ is exact.
Then, for each $i\geq 0$,
$$\begin{aligned}
\Tor_i^{A_D}(N,M)\otimes_D F&= \Tor_i^{A_D}(N,M\otimes_D F)=\Tor_i^{A_D}(N,M_F)\\
&=\Tor_i^{A_D}(N,(A_F \otimes_{A_F} M_F))=\Tor_i^{A_F}((N\otimes_{A_D}A_F), M_F)\\
&=\Tor_i^{A_F}(N_F, M_F).
\end{aligned}
$$

(2) In part (2) we do not assume that $F$ is flat over $D$.
By the standard  Tor spectral sequence \cite[Theorem 11.51]{Ro},
$$\Tor_i^{A_D}(N,M)\otimes_D F= \Tor_i^{A_D}(N,M\otimes_D F)=\Tor_i^{A_D}(N,M_F)$$
since $M$ and $\Tor_i^{A_D}(N,M)$ are free over $D$ for all
$i\leq j$. Since $N$ is free over $D$, $\Tor_s^{A_D}(N, A_F)=0$
for all $s>0$. Then, by \cite[Theorem 11.51]{Ro}, we have
$$\begin{aligned}
\Tor_i^{A_D}(N,M_F)&=\Tor_i^{A_D}(N,(A_F\otimes_{A_F} M_F))
=\Tor_i^{A_F}((N\otimes_{A_D} A_F), M_F)\\
&=\Tor_i^{A_F}(N_F, M_F).
\end{aligned}
$$
Therefore the assertion follows.
\end{proof}

We recall some definitions for $D$-algebras. As  convention in
this paper,  a {\it filtration} of an algebra $A$ is an exhaustive
${\mathbb N}$-filtration. A {\it filtered} algebra is an algebra with
a filtration. Following \cite[p. 580]{ArSZ}, a $D$-algebra $A$
is called {\it strongly noetherian} if for every noetherian
commutative $D$-algebra $F$, $A\otimes_{D} F$ is noetherian. A
{\it graded} algebra means an ${\mathbb N}$-graded algebra and
{\it locally finite} means that each homogeneous piece is of
finite rank over $D$. We say a filtered algebra $A$ with
filtration ${\mathcal F}$ is {\it locally finite} if its associated graded
ring $\gr_{\mathcal F} A$ is locally finite.

\begin{definition}
\label{xxdef4.3}
A $\Bbbk$-algebra $A$ is called {\it congenial}
if the following hold.
\begin{enumerate}
\item[(1)]
$A$ is a noetherian locally finite filtered algebra
with an ${\mathbb N}$-filtration ${\mathcal F}$.
\item[(2)]
$A$ has an order $A_D$ where $D$ is a $\Bbbk_0$-affine
subalgebra of $\Bbbk$ such that $A_D$ is a noetherian
locally finite filtered algebra over $D$
with the induced filtration, still denoted by ${\mathcal F}$.
\item[(3)]
The associated graded ring $\gr_{\mathcal F} A_D$ is an
order of $\gr_{\mathcal F} A$.
\item[(4)]
$\gr_{\mathcal F} A_D$ is a strongly noetherian locally finite graded algebra over $D$.
\item[(5)]
If $F$ is a factor ring of $D$ and  is a finite field, then
$A_D\otimes_D F$ is an affine noetherian PI algebra over $F$.
\end{enumerate}
\end{definition}

Let $M$ be a f.g. right $A_D$-module. We say $M$ is
{\it generically free} \cite[p. 580]{ArSZ} if, there is a
simple localization $D[s^{-1}]$ for some $0\neq s\in D$,
such that $M[s^{-1}]$ is free over $D[s^{-1}]$. We will use the following
generic freeness result in \cite{ArSZ}.

\begin{lemma}
\label{xxlem4.4} Let $A$ be congenial. The following hold.
\begin{enumerate}
\item[(1)]
$A_D$ is a strongly noetherian locally finite filtered algebra over $D$.
\item[(2)]
Every f.g. right $A_D$-module $M$ is generically free. This
implies that if $D$ is replaced by $D[s^{-1}]$ for some $0\neq s\in D$,
$M$ becomes free over $D$.
\item[(3)]
$\gr_{\mathcal F} A$ is congenial.
\end{enumerate}
\end{lemma}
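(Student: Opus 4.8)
The plan is to prove the three statements in order, with (1) feeding (2) and (3). For (1), local finiteness of the filtered algebra $A_D$ is built into Definition \ref{xxdef4.3}(2), so only strong noetherianity requires argument. Fix a noetherian commutative $D$-algebra $F$. Since $A_D$ is an order of $A$ it is free over $D$, and the graded pieces $(\gr_{\mathcal F}A_D)_i$ are free over $D$ because the order $\gr_{\mathcal F}A_D$ is free and graded; by induction on $i$ each $F_iA_D$ is therefore free over $D$. Hence base change along $D\to F$ keeps the inclusions $F_iA_D\hookrightarrow A_D$ injective, the resulting $\mathbb N$-filtration on $A_D\otimes_D F$ is exhaustive, and its associated graded ring is $(\gr_{\mathcal F}A_D)\otimes_D F$. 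By Definition \ref{xxdef4.3}(4) the ring $\gr_{\mathcal F}A_D$ is strongly noetherian, so $(\gr_{\mathcal F}A_D)\otimes_D F$ is noetherian; the standard fact that an exhaustive $\mathbb N$-filtered ring is noetherian once its associated graded ring is noetherian then gives that $A_D\otimes_D F$ is noetherian. As $F$ was arbitrary, $A_D$ is strongly noetherian.

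Part (2) follows at once. By (1), $A_D$ is a strongly noetherian algebra over the admissible domain $D$, so the generic freeness theorem of \cite{ArSZ} shows every f.g. right $A_D$-module $M$ is generically free over $D$. For the concluding remark, generic freeness yields $0\neq s\in D$ with $M[s^{-1}]$ free over $D[s^{-1}]$; since $D[s^{-1}]$ is again $\Bbbk_0$-affine, replacing $D$ by $D[s^{-1}]$ makes $M$ free over $D$.

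For (3), give $B:=\gr_{\mathcal F}A$ the grading filtration $\mathcal F'$ with $F'_iB=\bigoplus_{j\le i}B_j$ and propose the order $B_D:=\gr_{\mathcal F}A_D$ with its grading filtration. The advantage of this filtration is that $\gr_{\mathcal F'}B=B$ and $\gr_{\mathcal F'}B_D=B_D$, so conditions (1)--(4) of Definition \ref{xxdef4.3} for $B$ collapse onto data already known for $A$: the algebra $B=(\gr_{\mathcal F}A_D)\otimes_D\Bbbk$ is noetherian and locally finite because $\gr_{\mathcal F}A_D$ is strongly noetherian and locally finite; $B_D=\gr_{\mathcal F}A_D$ is an order of $B=\gr_{\mathcal F}A$ by Definition \ref{xxdef4.3}(3) for $A$; and Definition \ref{xxdef4.3}(4) for $B$ is exactly Definition \ref{xxdef4.3}(4) for $A$. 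Only Definition \ref{xxdef4.3}(5) for $B$ remains: for each finite field factor ring $F$ of $D$ one must see that $B_D\otimes_D F=\gr_{\mathcal F}(A_D\otimes_D F)$ is affine, noetherian and PI over $F$. Noetherianity is strong noetherianity of $\gr_{\mathcal F}A_D$ as in (1); the ring is PI because $A_D\otimes_D F$ is PI by Definition \ref{xxdef4.3}(5) for $A$ and the associated graded ring of a PI algebra satisfies any multilinear identity of the algebra (evaluate it on leading symbols of filtered elements); and it is affine because a locally finite $\mathbb N$-graded noetherian algebra over a field is finitely generated: a basis of the finite-dimensional degree-zero part together with finitely many homogeneous generators of the augmentation ideal generate it, by an induction on degree.

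The step that carries the real content is this last verification in (3): extracting affineness and the polynomial identity for $\gr_{\mathcal F}(A_D\otimes_D F)$ from the affine PI algebra $A_D\otimes_D F$. Affineness is the delicate point, since it forces one to actually produce finitely many generators, and is handled by the graded Hilbert-basis argument above. Underlying everything is the need to make $-\otimes_D F$ commute with passage to the associated graded ring, which is precisely where freeness of the order $A_D$ over $D$ is used.
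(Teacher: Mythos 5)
Your proposal is correct and has the same skeleton as the paper's proof, but where the paper delegates, you argue directly. The paper proves (1) in one line by citing \cite[Proposition 4.10]{ArSZ} (an exhaustively filtered algebra whose associated graded ring is strongly noetherian is itself strongly noetherian) together with Definition \ref{xxdef4.3}(4); your argument for (1) is essentially an inline proof of that cited proposition in the present setting, via base change of the filtration and the standard graded-to-filtered transfer of noetherianity. For (2) the two proofs coincide: part (1) plus the generic freeness theorem \cite[Theorem 0.3]{ArSZ}. For (3) the paper says only ``this is clear,'' and you supply the verification the authors suppressed; the one step with genuine content --- that Definition \ref{xxdef4.3}(5) passes to $\gr_{\mathcal F} A$, using that $\gr$ commutes with $-\otimes_D F$, that a multilinear identity of a filtered algebra is inherited by its associated graded ring, and that a locally finite ${\mathbb N}$-graded noetherian algebra over a field is affine --- is handled correctly. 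One small repair: a graded piece of $\gr_{\mathcal F} A_D$ is a direct summand of a free $D$-module, hence projective and flat, but not necessarily free over the domain $D$; flatness is all your base-change step actually needs, so the argument stands as written (alternatively, localize $D$ at a suitable element, which the congenial framework permits).
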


\begin{proof} (1) This follows from \cite[Proposition 4.10]{ArSZ} and
the fact that $\gr_{\mathcal F} A_D$ is a strongly noetherian locally finite graded algebra over $D$.

(2) This follows by part (1) and \cite[Theorem 0.3]{ArSZ}.

(3) This is clear.
\end{proof}

We will use generic freeness to prove some properties concerning
the GK-dimension of modules over a congenial algebra.

\begin{definition}
\label{xxdef4.5} Let $A$ be an algebra over $\Bbbk$ and let $M$ be
a right $A$-module. Let $D$ be a $\Bbbk_0$-affine subalgebra
of $\Bbbk$ and $A_D$ be an order of $A$.
We say that $M$ is {\it $\GKdim$-stable over $D$} if $M$ has
a $D$-order $M_D$ such that, for every algebra map $D\to F$ to a
noetherian commutative domain $F$,
\begin{equation}
\label{E4.5.1}\tag{E4.5.1}
\GKdim (M_D\otimes_D F)_{A_D\otimes_D F}=\GKdim M.
\end{equation}
\end{definition}

\begin{lemma}
\label{xxlem4.6}
If $M$ is $\GKdim$-stable over $D$, then $M_{D'}$ is $\GKdim$-stable over
$D'$ for every $\Bbbk_0$-affine subalgebra $D'$
of $\Bbbk$  containing $D$.
\end{lemma}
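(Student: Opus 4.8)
\emph{The plan is to} exhibit an explicit $D'$-order for $M$ built from the given $D$-order and then reduce the stability condition over $D'$ to the one over $D$ by a change-of-rings computation. Since $M$ is $\GKdim$-stable over $D$, we are given an order $A_D$ of $A$ and a $D$-order $M_D$ of $M$ for which \eqref{E4.5.1} holds for every algebra map $D\to F$ into a noetherian commutative domain $F$. The natural candidates over $D'$ are $A_{D'}:=A_D\otimes_D D'$ and $M_{D'}:=M_D\otimes_D D'$.

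\emph{First I would verify} that these are genuine orders. Because $A_D$ and $M_D$ are free over $D$, their base changes $A_{D'}$ and $M_{D'}$ are free over $D'$, giving Definition \ref{xxdef4.1}(1a),(2a); no flatness of $D'$ over $D$ is needed, only that $(\bigoplus D)\otimes_D D'=\bigoplus D'$. Concretely, a $D$-basis of $A_D$ is a $\Bbbk$-basis of $A$, hence is $D'$-linearly independent inside $A$, so $A_{D'}$ is identified with the $D'$-subalgebra of $A$ that it spans, and likewise $M_{D'}$ with a $D'$-submodule of $M$. Associativity of tensor product over the tower $D\subseteq D'\subseteq\Bbbk$ then gives $A_{D'}\otimes_{D'}\Bbbk=A_D\otimes_D\Bbbk=A$ and $M_{D'}\otimes_{D'}\Bbbk=M_D\otimes_D\Bbbk=M$, which is Definition \ref{xxdef4.1}(1b),(2b). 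Thus $A_{D'}$ is an order of $A$ and $M_{D'}$ is a $D'$-order of $M$.

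The heart of the argument is the reduction of the defining condition \eqref{E4.5.1}. Let $D'\to F$ be any algebra map to a noetherian commutative domain $F$. Precomposing with the inclusion $D\hookrightarrow D'$ produces an algebra map $D\to F$, to which the stability of $M$ over $D$ applies. Using associativity of tensor product once more,
$$M_{D'}\otimes_{D'}F=(M_D\otimes_D D')\otimes_{D'}F=M_D\otimes_D F,
\qquad A_{D'}\otimes_{D'}F=A_D\otimes_D F.$$
Hence the right $A_{D'}\otimes_{D'}F$-module $M_{D'}\otimes_{D'}F$ coincides with the right $A_D\otimes_D F$-module $M_D\otimes_D F$, so their GK-dimensions agree; by \eqref{E4.5.1} for the composite $D\to F$ this common value equals $\GKdim M$. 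This is exactly \eqref{E4.5.1} for $D'$ and the order $M_{D'}$, so $M_{D'}$ is $\GKdim$-stable over $D'$.

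\emph{I do not expect a real obstacle} here: the statement is a formal consequence of associativity of tensor product together with the fact that base change preserves freeness. The only point requiring a moment's care is that the universal quantifier over maps $D'\to F$ in Definition \ref{xxdef4.5} is subsumed by the one over maps $D\to F$ via precomposition with $D\hookrightarrow D'$, which is legitimate because every noetherian commutative domain receiving $D'$ is in particular one receiving $D$.
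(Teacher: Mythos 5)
Your proposal is correct and follows essentially the same route as the paper: define $A_{D'}=A_D\otimes_D D'$ and $M_{D'}=M_D\otimes_D D'$, precompose any algebra map $D'\to F$ with the inclusion $D\hookrightarrow D'$, identify $M_{D'}\otimes_{D'}F$ with $M_D\otimes_D F$ (and likewise for $A$), and invoke \eqref{E4.5.1} over $D$. The paper's proof is just a terser version of this, leaving implicit the verification that the base-changed objects are genuine orders, which you spell out correctly using freeness.
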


\begin{proof}
Let $f: D'\to F$ be an algebra map. Then $D\to D'\to F$ is an algebra
map. Applying \eqref{E4.5.1},  we have
$$
\GKdim (M_{D'}\otimes_{D'} F)_{A_{D'}\otimes_{D'} F}
=\GKdim (M_D\otimes_D F)_{A_D\otimes_D F}
=\GKdim M.
$$
The assertion follows.
\end{proof}

\begin{proposition}
\label{xxpro4.7}
Suppose that $A$ is congenial.
Let $M$ be a f.g. right $A$-module. Then there is an affine
$\Bbbk_0$-subalgebra $D\subseteq \Bbbk$ such that $M$ is
$\GKdim$-stable over $D$.
\end{proposition}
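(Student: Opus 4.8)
The plan is to reduce the computation of $\GKdim M$ to the growth of a graded Hilbert function and then to show that, after a suitable localization of $D$, this Hilbert function is unchanged under every base change to a noetherian commutative domain.

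First I would fix a good filtration on $M$. Since $M$ is f.g., write $M=VA$ for a finite dimensional $\Bbbk$-subspace $V$ and set $F_iM:=VF_iA$; then $\gr_{\mathcal F}M$ is a f.g. graded right $\gr_{\mathcal F}A$-module and, by \cite[Proposition 6.6]{KL}, $\GKdim M=\GKdim(\gr_{\mathcal F}M)$. By Lemma \ref{xxlem4.4}(3), $\gr_{\mathcal F}A$ is again congenial, so $\gr_{\mathcal F}A_D$ is a strongly noetherian locally finite graded $D$-algebra that is an order of $\gr_{\mathcal F}A$. Choosing a free $D$-submodule $V_D\subseteq M$ with $V_D\otimes_D\Bbbk=V$, I would set $M_D:=V_DA_D$ with the filtration $F_iM_D:=V_DF_iA_D$, so that $\gr_{\mathcal F}M_D$ is a f.g. graded right $\gr_{\mathcal F}A_D$-module. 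This reduces everything to the single graded module $\gr_{\mathcal F}M_D$ over the strongly noetherian $D$-algebra $\gr_{\mathcal F}A_D$.

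Next I would invoke generic freeness. By Lemma \ref{xxlem4.4}(2) there is $0\neq s\in D$ such that, after replacing $D$ by $D[s^{-1}]$ (permitted by Lemma \ref{xxlem4.6}), the module $\gr_{\mathcal F}M_D$ is free over $D$. Because the grading is $D$-linear, each homogeneous piece $(\gr_{\mathcal F}M_D)_j$ is a $D$-direct summand of this free module, hence f.g. projective of some rank $r_j$, constant since $D$ is a domain. The projectivity of the layers makes the short exact sequences $0\to F_{i-1}M_D\to F_iM_D\to(\gr_{\mathcal F}M_D)_i\to 0$ split $D$-linearly, so each $F_iM_D$ is f.g. projective and, for any algebra map $D\to F$, the induced filtration on $M_F:=M_D\otimes_DF$ is good with
\[
\gr_{\mathcal F}(M_F)=(\gr_{\mathcal F}M_D)\otimes_DF,\qquad \gr_{\mathcal F}(A_F)=(\gr_{\mathcal F}A_D)\otimes_DF .
\]
Taking $F=\Bbbk$ identifies $\gr_{\mathcal F}M_D\otimes_D\Bbbk$ with $\gr_{\mathcal F}M$, so $M_D$ is an order of $M$ in the sense of Definition \ref{xxdef4.1}. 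Now since each $(\gr_{\mathcal F}M_D)_j$ is f.g. projective, its rank equals the trace of a defining idempotent and is therefore preserved under every base change $D\to F$; hence $(\gr_{\mathcal F}M_F)_j$ again has rank $r_j$ over $F$. Computing the GK-dimension over $F$ through the growth of these ranks (equivalently, over $\Frac F$) via \eqref{E1.1.6}, the quantity $\sum_{j\leq k}r_j$ is independent of $F$, whence
\[
\GKdim(M_F)_{A_F}=\GKdim(\gr_{\mathcal F}M_F)=\GKdim(\gr_{\mathcal F}M)=\GKdim M,
\]
which is exactly \eqref{E4.5.1}, so $M$ is $\GKdim$-stable over this $D$.

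The main obstacle is the uniformity required in the second step: a priori generic freeness guarantees only that the \emph{single} module $\gr_{\mathcal F}M_D$ becomes free after inverting \emph{one} element $s$, whereas I need all infinitely many homogeneous pieces to acquire a base-change-invariant rank simultaneously. This is precisely where strong noetherianity of $\gr_{\mathcal F}A_D$ and local finiteness of the grading are indispensable: they force the one localization trivializing the whole module to trivialize each homogeneous layer at once, so that the entire Hilbert function, and with it the GK-dimension, is stable under arbitrary base change to domains.
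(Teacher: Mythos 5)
Your overall strategy---construct a $D$-order $M_D$ of $M$, filter it compatibly with ${\mathcal F}$, use generic freeness (Lemma \ref{xxlem4.4}(2)) to make $\gr_{\mathcal F}M_D$ free over $D$ after inverting one element, and then argue that the Hilbert function, hence the GK-dimension, survives every base change $D\to F$---is the same as the paper's. But there is a genuine gap at the very first step: the construction of $M_D$. You set $M_D:=V_DA_D\subseteq M$, where $V_D$ is the $D$-span of a $\Bbbk$-basis of $V$, and later assert that $\gr_{\mathcal F}M_D\otimes_D\Bbbk$ is identified with $\gr_{\mathcal F}M$, i.e.\ that $M_D$ is an order of $M$. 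This fails in general, because nothing in your construction forces the coefficients of the \emph{relations} of $M$ into $D$; the multiplication map $M_D\otimes_D\Bbbk\to M$ is surjective but can have a large kernel. Concretely, let $A=\Bbbk[x]$ with the standard filtration and $A_D=D[x]$ (this $A$ is congenial with $D=\Bbbk_0$), let $\lambda\in\Bbbk$ be transcendental over $\Frac(D)$, and let $M=A/(x)\,\oplus\, A/(x-\lambda)$, with $\Bbbk$-basis $u_1,u_2$ satisfying $u_1x=0$ and $u_2x=\lambda u_2$. Taking $V_D=Du_1+Du_2$ gives $M_D=Du_1\oplus D[\lambda]u_2$, and $\gr_{\mathcal F}M_D$ is \emph{already} free over $D$ with Hilbert function $2,1,1,1,\dots$, whereas $\gr_{\mathcal F}M$ has Hilbert function $2,0,0,\dots$. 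So all of your later steps (projective graded pieces, rank invariance) go through formally but compute the wrong Hilbert function: here $\GKdim M=0$, yet $\GKdim (M_D\otimes_DF)_{A_F}=1$ for any finite field quotient $F$ of $D$ (one gets $F\oplus F[x]$ as an $F[x]$-module), so this $M_D$ is not $\GKdim$-stable. No localization $D[s^{-1}]$ repairs this---the obstruction is transcendence of $\lambda$, not $D$-torsion---and your proposal only ever localizes $D$, never enlarges it by ring generators.

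The paper avoids this by building $M_D$ from a finite \emph{presentation} rather than from generators alone: choose $A^{\oplus m}\xrightarrow{\;\phi\;}A^{\oplus n}\to M\to 0$, enlarge $D$ by the finitely many matrix entries of $\phi$ (expressed in a $D$-basis of $A_D$) so that $\phi$ is defined over $D$, and define $M_D$ as the cokernel of $\phi_D$. Right-exactness of $-\otimes_D\Bbbk$ then gives $M_D\otimes_D\Bbbk=M$ automatically, and only afterwards is generic freeness invoked, first to make $M_D$ free over $D$ (so that it embeds in $M$ as an honest order) and then to make $\gr_{\mathcal F}M_D$ free over $D$ (so that the filtration layers base-change correctly). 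In the example above this amounts to replacing $D$ by $D[\lambda]$, after which $M_{D[\lambda]}=D[\lambda]u_1\oplus D[\lambda]u_2$ is a genuine order and stability holds. From that point on your argument coincides with the paper's; the missing idea is precisely this enlargement of $D$ by the structure constants of $M$, which is a finitely generated ring extension and not a localization.
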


\begin{proof} By definition, there is a $D\subseteq \Bbbk$ such that
$A_D$ is an order of $A$. Since $A$ is noetherian and
$M$ is f.g, there is an exact sequence of right $A$-modules
$$A^{\oplus m}\xrightarrow{\phi} A^{\oplus n} \to M\to 0$$
for some integers $m$ and $n$.
Using a $D$-basis of $A_D$ as a $\Bbbk$-basis of $A$,
all coefficients of $\phi$ are in the subring $D'=D[c_i]$
for finitely many $c_i\in \Bbbk$. Replacing $D$ by $D'$ we might
assume that all coefficients of $\phi$ are in $D$. Let
$M_D$ be defined by the exact sequence
$$A_D^{\oplus m}\xrightarrow{\phi_D} A_D^{\oplus n} \to M_D\to 0,$$
where the matrix corresponding to $\phi_D$ is the same as the matrix
corresponding to $\phi$.
By Lemma \ref{xxlem4.4}(2), after replacing $D$ by $D[s^{-1}]$,
one can assume that $M_D$ is free over $D$. Since $\phi=\phi_{D}\otimes_D \Bbbk$,
$M_D\otimes_D \Bbbk=M$. Since $D\to \Bbbk$ is injective (and flat), we can view
$M_D$ as an $A_D$-submodule of $M$. Hence, $M_D$ is an order
of $M$.

Since $A$ has a filtration $\mathcal{F}$, we have the induced filtration, still
denoted by $\mathcal{F}$, of $A_D$. Let $M_D$ be a f.g. right $A_D$-module
with a finite set of generators $G$. Define a filtration on $M_D$ by
$F_i M_D= G (F_i A_D)$ for all $i\geq 0$. The associated graded module is
$$\gr_{\mathcal F} M_D:= \bigoplus_{i=0}^{\infty} F_i M_D/F_{i-1} M_D.$$
Then the graded right module $\gr_{\mathcal F} M_D$ is f.g.  over $\gr_{\mathcal F} A_D$.
In Definition \ref{xxdef4.3}, one can freely replace $D$ by a finitely generated
extension of $D$. Since $\gr_{\mathcal F} A_D$ is strongly noetherian and locally
finite  by definition,
$\gr_{\mathcal F} M_D$ is generic free by Lemma \ref{xxlem4.4}(2).
By replacing $D$ by $D[s^{-1}]$ for some $s$, we assume that
$\gr_{\mathcal F} M_D$ is free over $D$. As a consequence, each $F_i M_D$
is free of finite rank over $D$. In this case,
the $\GKdim$ of $M_D$ is only dependent on the
Hilbert series of $\gr_{\mathcal F} M_D$. Similar comment holds true
for $M$. One can check that $\gr_{\mathcal F} M_D$ is an order of $\gr_{\mathcal F} M$,
whence, the Hilbert series of $\gr_{\mathcal F} M_D$ over $D$ is
equal to the Hilbert series of $\gr_{\mathcal F} M$ over $\Bbbk$.
Therefore
$$\GKdim (M_D)_{A_D}=\GKdim (\gr_{\mathcal F} M_D)_{\gr_{\mathcal F} A_D}
=\GKdim (\gr_{\mathcal F} M)_{\gr_{\mathcal F} A}=\GKdim M_A.$$
Now consider an algebra map $D\to F$ where $F$ is a noetherian
commutative domain. Since $\gr_{\mathcal F} M_D$ is free over $D$,
$(\gr_{\mathcal F} M_D)\otimes_D F$ is free over $F$. Consequently,
$M_F:=M_D\otimes_D F$ is free over $F$. The $\GKdim$ of $M_F$
can be computed by the Hilbert series of $\gr_{\mathcal F} M_F$ since
$\gr_{\mathcal F} M_F$ is a f.g. right module over the noetherian graded ring
$\gr_{\mathcal F} A_F$. Therefore
$$\GKdim (M_{F})_{A_{F}}=\GKdim (M_{D})_{A_D}=\GKdim M_A,$$
and thus $M$ is $\GKdim$-stable over $D$.
\end{proof}

\begin{proposition}
\label{xxpro4.8}
Let $R$ be an congenial algebra with a filtration ${\mathcal F}$.
Let $H$ be a semisimple Hopf algebra acting on $R$
with $H$-action preserving the filtration.
Let $B:=R\# H$ and $A:=eBe$ as in Hypothesis {\rm{\ref{xxhyp2.4}}}.
Then the following hold.
\begin{enumerate}
\item[(1)]
$B$ is congenial.
\item[(2)]
$A$ is congenial.
\item[(3)]
Let $M$ be $R(=eB)$.
There is a $\Bbbk_0$-affine subalgebra $D\subseteq \Bbbk$
and an order $M_D$ of $M$ such that $M_D$ is an
$(A_D, B_D)$-bimodule that is $D$-central and f.g. on both sides.
\end{enumerate}
\end{proposition}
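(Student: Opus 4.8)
The plan is to produce a single admissible domain $D\subseteq\Bbbk$ over which $R$, the Hopf data of $H$, the $H$-action and the integral are all defined, so that $B=R\#H$, $A=eBe$ and the bimodule $R$ all acquire orders by restriction of scalars, and then to verify Definition~\ref{xxdef4.3} clause by clause, leaning on congeniality of $R$ and on finiteness of $H$, and invoking Lemma~\ref{xxlem4.4}(2) whenever freeness over $D$ must be arranged. For part (1), by congeniality of $R$ I fix $D_0$ with an order $R_{D_0}$ and enlarge it to a $\Bbbk_0$-affine $D$ containing, relative to a fixed $\Bbbk$-basis of $H$, the structure constants of the product, coproduct, counit and antipode of $H$, the coordinates of $\inth$, and the structure constants of the action of $H$ on a generating set of $R_D$. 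This yields a Hopf order $H_D$, free of finite rank with $H_D\otimes_D\Bbbk=H$, with $\inth\in H_D$, $\varepsilon(\inth)=1$, acting on $R_D$ and preserving the induced filtration. Setting $B_D:=R_D\#H_D$, it is free over $D$ with $B_D\otimes_D\Bbbk=B$, so an order, and with $F_i'B_D=(F_iR_D)\#H_D$ one gets $\gr_{\mathcal F}B_D\cong(\gr_{\mathcal F}R_D)\#H_D$, an order of $\gr_{\mathcal F}B\cong(\gr_{\mathcal F}R)\#H$. The two substantial clauses are strong noetherianity and the finite-field condition: for any noetherian commutative $D$-algebra $F$ the ring $\gr_{\mathcal F}B_D\otimes_DF=(\gr_{\mathcal F}R_D\otimes_DF)\#H_F$ is module-finite over the noetherian ring $\gr_{\mathcal F}R_D\otimes_DF$, hence noetherian; and for a finite field quotient $F=D/\mathfrak m$, $B_D\otimes_DF=R_F\#H_F$ is affine noetherian as a module-finite extension of the affine noetherian $R_F$, and is PI because the right regular representation embeds it into the endomorphism ring of the rank-$(\dim_\Bbbk H)$ free left $R_F$-module $R_F\#H_F$, a matrix algebra over $R_F^{\op}$, which is PI since $R_F$ is.

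For part (2) I put $A_D:=eB_De$ with $e=1\#\inth\in F_0'B_D$. The key point is that forming this corner commutes with base change, $A_D\otimes_D\Bbbk=e(B_D\otimes_D\Bbbk)e=A$, and—because $e$ lies in filtration degree $0$—with passing to associated graded: giving $eB_De$ the induced filtration $F_i=e(F_iB_D)e$, the identity $e(F_iB_D)e\cap F_{i-1}B_D=e(F_{i-1}B_D)e$, valid since $exe=e(exe)e$, yields $\gr_{\mathcal F}A_D\cong e'(\gr_{\mathcal F}B_D)e'$ with $e'=\gr e$. Hence $\gr_{\mathcal F}A_D$ is the corner cut from the order $\gr_{\mathcal F}B_D$ inside $e'(\gr_{\mathcal F}B)e'=\gr_{\mathcal F}A$, so an order of it. Congeniality of $A$ is then inherited through the corner: strong noetherianity passes from $S=\gr_{\mathcal F}B_D$ to $e'Se'$ because $(e'Se')\otimes_DF=(e'\otimes1)(S\otimes_DF)(e'\otimes1)$ is a corner of a noetherian ring; over a finite field $A_F=e(R_F\#H_F)e\cong R_F^{H_F}$ is noetherian and PI as a corner, and affine because $R_F$ is module-finite over $A_F$ (the $F$-analog of \cite[Lemma 3.1(2)]{BHZ}, $S e$ finite over $eSe$), so that the Artin–Tate lemma applies; and $A=R^H$ itself is noetherian by \cite[Lemma 3.1(1)]{BHZ} with $\gr_{\mathcal F}A$ locally finite. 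The one nonformal point is freeness over $D$, which fails for corners in general but is restored after replacing $D$ by some $D[s^{-1}]$ via Lemma~\ref{xxlem4.4}(2), since $A_D$ and $\gr_{\mathcal F}A_D$ are finitely generated over $B_D$, resp. $\gr_{\mathcal F}B_D$.

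For part (3) I take $M_D:=eB_D$, which is an $(A_D,B_D)$-bimodule (left $A_D=eB_De$, right $B_D$), $D$-central since $D$ is central throughout, finitely generated on the right as $eB_D=e\cdot B_D$ is cyclic, and finitely generated on the left since $eB$ is finite over $eBe$ by \cite[Lemma 3.1(2)]{BHZ}. One further localization together with Lemma~\ref{xxlem4.4}(2) makes $M_D$ free over $D$, and then $M_D\otimes_D\Bbbk=eB=M$, so $M_D$ is the asserted order.

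The main obstacle is not the module-theoretic bookkeeping but propagating the PI and strongly noetherian properties through the smash product and then the corner while keeping every auxiliary object free over $D$. Concretely, the PI transfer over finite fields rests on the matrix embedding of part (1), the affineness of the fixed ring $R_F^{H_F}$ rests on module-finiteness plus Artin–Tate, and the recurring freeness rests on generic freeness (Lemma~\ref{xxlem4.4}(2)); these three points are where the genuine content lies, everything else being a formal consequence of $e$ sitting in filtration degree $0$ and $H$ being free of finite rank.
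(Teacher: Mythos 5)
Your overall strategy---spreading $R$, $H$, the action and the integral over a single admissible $D$ and then verifying Definition \ref{xxdef4.3} clause by clause---is the paper's strategy, and your parts (1) and (3) essentially coincide with the paper's proof: the same Hopf order $H_D$, the same filtration with $H$ in degree $0$, the same $M_D=eB_D$ with \cite[Lemma 3.1(2)]{BHZ} giving left finite generation (where the paper treats Definition \ref{xxdef4.3}(5) for $B$ as clear, you supply the matrix-embedding argument for the PI property, which is a correct filling-in). In part (2), however, you take a genuinely different route. The paper works with the fixed-ring model: it sets $A_D:=(R_D)^{H_D}=\inth\cdot R_D$, obtains $\gr_{\mathcal F}(R^H)=(\gr_{\mathcal F}R)^H=\inth\cdot(\gr_{\mathcal F}R)$ from \cite[Lemma 3.1]{CWWZ}, gets strong noetherianity of $A_D$ from \cite[Lemma 3.1]{BHZ}, and gets freeness over $D$ from \cite[Theorem 0.2]{ArSZ}. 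You work instead with the Peirce corner $eB_De$: your identity $e(F_iB_D)e\cap F_{i-1}B_D=e(F_{i-1}B_D)e$, giving $\gr_{\mathcal F}A_D\cong e'(\gr_{\mathcal F}B_D)e'$, replaces the \cite{CWWZ} input; ``corners of noetherian rings are noetherian'' together with the fact that corners commute with base change replaces the \cite{BHZ} input; and the PI Artin--Tate lemma supplies affineness over finite field quotients. This corner route is more self-contained (it never uses the Hopf-theoretic description of the associated graded of a fixed ring), at the cost of invoking Artin--Tate for PI rings and re-deriving facts the paper takes off the shelf.

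There is one step that does not work as written: the freeness of $A_D$ over $D$. You invoke Lemma \ref{xxlem4.4}(2) ``since $A_D$ is finitely generated over $B_D$,'' but $eB_De$ is not a right $B_D$-module, so that lemma does not apply to it; nor can you apply the lemma to the genuine right $B_D$-module $eB_D$ and then pass to $eB_De$, because a $D$-module direct summand of a free module is in general only projective, while Definition \ref{xxdef4.3} (via Definition \ref{xxdef4.1}) demands actual freeness of $A_D$ and of $\gr_{\mathcal F}A_D$. The repair uses exactly what you have already established: since $\gr_{\mathcal F}A_D$ (hence $A_D$) is strongly noetherian over $D$ by your corner argument, generic freeness for strongly noetherian algebras---\cite[Theorem 0.3]{ArSZ}, which is the result behind Lemma \ref{xxlem4.4}---applies to $A_D$ and $\gr_{\mathcal F}A_D$ viewed as finitely generated modules over themselves; this is precisely how the paper proceeds, citing \cite{ArSZ} only after securing strong noetherianity of $A_D$. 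A second, smaller point: your appeal to the $F$-analogue of \cite[Lemma 3.1(2)]{BHZ} over a finite field quotient $F$ tacitly requires $H_F:=H_D\otimes_D F$ to remain semisimple; this is true because the normalized integral survives base change ($\varepsilon(\inth)=1$ maps to $1\neq 0$ in $F$), but since $F$ has positive characteristic this deserves to be said explicitly.
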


\begin{proof}
(1) Since $R$ is congenial, there is an order $R_D$ satisfying
the conditions listed in Definition \ref{xxdef4.3}. Remember
that we can freely replace $D$ by a $D$-affine subalgebra
$D'\subseteq \Bbbk$.

Since $H$ is finite dimensional with normalized integral $\inth$,
there is an order $H_D$ of $H$ such that $H_D$ is a Hopf $D$-algebra
and $\inth\in H_D$. Then $R_D\# H_D$ is an order of $R\# H$.
The filtration on $R$ extends to a filtration on $R\# H$ by setting
elements of $H$ in $F_0$. Then Definition \ref{xxdef4.3}(1,2) holds
for the algebra $R\# H$.

Since $\gr_{\mathcal F} R$ is congenial, we can assume that $F_i (R_D)$ and
$F_i (R_D)/ F_{i-1} (R_D)$ are free over $D$ for all $i$ because
$\gr_{\mathcal F} R_D$ is free over $D$. Thus every $F_i (R_D\# H_D)$ and
$F_i (R_D\# H_D)/ F_{i-1} (R_D\# H_D)$ are free over $D$ for all $i$.
Therefore we have $\gr_{\mathcal F} (R_D\# H_D)=(\gr_{\mathcal F} R_D)\# H_D$. Since
$\gr_{\mathcal F} R_D$ satisfies Definition \ref{xxdef4.3}(3,4), so does
$(\gr_{\mathcal F} R_D)\# H_D$. Therefore $\gr_{\mathcal F} (R_D\# H_D)$
satisfies Definition \ref{xxdef4.3}(3,4). Note that Definition \ref{xxdef4.3}(5)
is clear for $B$. The assertion follows
by combining the last two paragraphs.

(2) We identify $A$ with $R^H$, together with the filtration induced by
${\mathcal F}$. Then
\begin{equation}
\label{E4.8.1}\tag{E4.8.1}
\gr_{\mathcal F} A= \gr_{\mathcal F} (R^H)
=(\gr_{\mathcal F} R)^H=\inth\cdot (\gr_{\mathcal F} R)
\end{equation}
by \cite[Lemma 3.1]{CWWZ}.
Define $A_D:=(R_D)^{H_D}=\inth \cdot R_D$ with
the filtration induced by the filtration on $R$. Recall that $R_D$ is
strongly noetherian, so is $A_D$ by \cite[Lemma 3.1]{BHZ}.
Since $R$ is congenial, the hypothesis of \cite[Theorem 0.2]{ArSZ}
hold, and whence, we can assume that $A_D$ is free over $D$ after
changing $D$ if necessary. The equation $A_D=\inth \cdot R_D$ also shows that
$A_D$ is an order of $A$. Then
Definition \ref{xxdef4.3}(1,2) hold for $A$.

Similarly, using \eqref{E4.8.1}, one
can check that Definition \ref{xxdef4.3}(3,4) hold for $A$.
Definition \ref{xxdef4.3}(5)
is clear for $A=\inth \cdot R$. Therefore
the assertion follows.

(3) Let $D$ be such that $B_D$ (respectively, $H_D$ and $R_D$)
is an order of $B$ (respectively, $H$ and $R$ sitting inside $B$).
Identify $eB$ (respectively, $eBe$) with $R$ (respectively,
$A$) by \cite[Lemma 3.1(4,2)]{BHZ}. Since $A$ is congenial,
by Lemma \ref{xxlem4.4}(2), one can assume that $eB_D =R_D$
(respectively, $eB_De$) is an order of $eB=R$ (respectively,
$eBe$). By \cite[Lemma 3.1(2)]{BHZ}, $eB_D$ is f.g. in both sides.
Finally it is clear that $eB_D $ is $D$-central. The assertion
holds.
\end{proof}

\begin{theorem}
\label{xxthm4.9}
Let $R$ be a congenial algebra with a filtration $\mathcal{F}$.
Let $H$ be a semisimple Hopf algebra acting on $R$
with $H$-action preserving the filtration.
Let $B=R\# H$ and $A=eBe$ as in Hypothesis {\rm{\ref{xxhyp2.4}}}.
Then $\gamma_{n,j}(eB)$ holds for all $n,j$.
\end{theorem}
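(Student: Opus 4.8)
The statement asserts that $\GKdim$ satisfies the torsitivity-type bound $\GKdim(\Tor^A_j(N,eB)_B)\le n$ for every f.g. right $A$-module $N$ with $\GKdim(N_A)\le n$ and every $j$. The plan is to prove this by reduction mod $p$: transport the problem to the finite-field setting, where the desired bound is available from the PI torsitivity of Lemma \ref{xxlem3.1}(2), and then transport the resulting dimension bound back to $\Bbbk$ using $\GKdim$-stability.

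First I would set up compatible integral structures. By Proposition \ref{xxpro4.8}, both $A=eBe$ and $B=R\#H$ are congenial, and $eB=R$ admits an order $M_D$ that is an $(A_D,B_D)$-bimodule, $D$-central and f.g. on both sides. Fixing $n$ and $j$ and a f.g. right $A$-module $N$ with $\GKdim(N_A)\le n$, I would use Proposition \ref{xxpro4.7} to choose a $\Bbbk_0$-affine subalgebra $D\subseteq\Bbbk$ over which $N$ is $\GKdim$-stable, with order $N_D$. Enlarging $D$ and inverting a single nonzero element if necessary (Lemma \ref{xxlem4.4}(2)), I would arrange that $N_D$, $M_D$ and all the bimodules $\Tor^{A_D}_i(N_D,M_D)$ for $0\le i\le j$ are free over $D$; this is a finite list of generic-freeness conditions. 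Then Lemma \ref{xxlem4.2}(2) applies for each $0\le i\le j$, giving $\Tor^{A_D}_i(N_D,M_D)\otimes_D F=\Tor^{A_F}_i(N_F,M_F)$ for every algebra map $D\to F$. In particular, taking $F=\Bbbk$ identifies $\Tor^{A_D}_j(N_D,M_D)$ as an order of $\Tor^A_j(N,eB)$, and the Hilbert-series argument of Proposition \ref{xxpro4.7} (applied to this order, now free over $D$) shows that $\Tor^A_j(N,eB)$ is itself $\GKdim$-stable over $D$.

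Next I would reduce to a finite field. Choosing a maximal ideal of $D$ avoiding the inverted element with finite residue field $F$, Definition \ref{xxdef4.3}(5) guarantees that $A_F$ and $B_F$ are affine noetherian PI algebras over $F$, so $\Kdim=\GKdim$ on their f.g. modules. Since $M_F=eB_F$ is an $(A_F,B_F)$-bimodule f.g. on both sides and $N_F$ is a f.g. right $A_F$-module, the PI torsitivity of Lemma \ref{xxlem3.1}(2) (with $C=A_F$ and target algebra $B_F$) yields $\GKdim(\Tor^{A_F}_j(N_F,M_F)_{B_F})\le\GKdim(N_F)_{A_F}$. Now I invoke $\GKdim$-stability twice: $\GKdim(N_F)_{A_F}=\GKdim N_A\le n$, and $\GKdim(\Tor^{A_F}_j(N_F,M_F)_{B_F})=\GKdim(\Tor^A_j(N,eB)_B)$ by the identification of the previous step. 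Combining these inequalities gives $\GKdim(\Tor^A_j(N,eB)_B)\le n$, which is exactly $\gamma_{n,j}(eB)$; since $n$ and $j$ were arbitrary, the theorem follows.

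The main obstacle I anticipate is the bookkeeping that keeps all orders over a single admissible domain $D$ and guarantees the relevant freeness: one must simultaneously make $N_D$, $M_D$ and the finitely many Tor-bimodules $\Tor^{A_D}_i(N_D,M_D)$ with $i\le j$ free over $D$, and then verify that the order $\Tor^{A_D}_j(N_D,M_D)$ of $\Tor^A_j(N,eB)$ is genuinely $\GKdim$-stable rather than merely of the correct rank. This is where Lemmas \ref{xxlem4.2} and \ref{xxlem4.4}, together with the Hilbert-series computation in Proposition \ref{xxpro4.7}, do the real work; once freeness is secured the passage between characteristics is clean, and the only genuinely new input is the availability of PI torsitivity (Lemma \ref{xxlem3.1}(2)) over the finite field $F$.
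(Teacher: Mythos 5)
Your proposal is correct and follows essentially the same route as the paper's own proof: choose an admissible domain $D$ over which $N$, $eB$ and the relevant $\Tor^{A_D}_i(N_D,M_D)$ are free and $\GKdim$-stable (Proposition \ref{xxpro4.7}, Proposition \ref{xxpro4.8}, Lemma \ref{xxlem4.4}(2)), identify $\Tor$ after base change via Lemma \ref{xxlem4.2}(2), pass to a finite-field quotient $F$ of $D$ where Definition \ref{xxdef4.3}(5) makes everything affine noetherian PI with $\Kdim=\GKdim$, and conclude with the PI torsitivity of Lemma \ref{xxlem3.1}(2). Your extra care in checking that the specific order $\Tor^{A_D}_j(N_D,M_D)$ itself witnesses the $\GKdim$-stability of $\Tor^A_j(N,eB)$ is a point the paper glosses over, but it is a refinement of the same argument, not a different one.
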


\begin{proof} It suffices to show
that, for every right $A$-module $N$,
$$\GKdim (\Tor^A_{j}(N, eB))_B \leq \GKdim N_A.$$
By Lemma \ref{xxlem4.4}(2) and Proposition
\ref{xxpro4.7}, we can pick a $D$ such that all of $N_A$, $_A(eB)$,
$(eB)_B$ and $(\Tor^A_{i}(N, eB))_B$, for $i\leq j$, are free and
$\GKdim$-stable over $D$. Then, for every algebra homomorphism
$D\longrightarrow F$ of commutative rings,
$$\begin{aligned}
\GKdim (\Tor^A_{j}(N, eB))_B&= \GKdim (\Tor^A_{j}(N, eB)\otimes_D F)
\qquad {\text{ Definition \ref{xxdef4.5}}}\\
&= \GKdim (\Tor^{A_F}_{j}(N_F, (eB)_F))
\qquad \quad{\text{Lemma  \ref{xxlem4.2}(2).}}
\end{aligned}
$$
Note that Lemma \ref{xxlem4.2}(2) can be applied in the last equation 
since all modules involved are free over $D$.
By Definition \ref{xxdef4.3}(5), for every finite field quotient
$F$ of $D$, $R_F$ is noetherian affine PI. In this case, $\Kdim
M=\GKdim M$ for all f.g modules $M$ over $R_F$, (or over $A_F$,
$B_F$). Then
$$\begin{aligned}
\GKdim (\Tor^{A_F}_{j}(N_F, (eB)_F)) &=
\Kdim (\Tor^{A_F}_{j}(N_F, (eB)_F)) \\
&\leq \Kdim N_F \qquad\qquad\qquad\qquad \quad {\text{Lemma \ref{xxlem3.1}(2)}}\\
&=\GKdim N_F= \GKdim N_B \qquad{\text{Definition \ref{xxdef4.5}.}}
\end{aligned}
$$
Therefore the assertion follows.
\end{proof}

Now we have an Auslander theorem for congenial algebras as a consequence
of Theorems \ref{xxthm2.3} and \ref{xxthm4.9}.

\begin{theorem}
\label{xxthm4.10}
Let $R$ be a congenial algebra with a filtration ${\mathcal F}$.
Let $H$ be a semisimple Hopf algebra acting on $R$
with $H$-action preserving the filtration.
Let $B=R\# H$ and $A=eBe$ as in Hypothesis {\rm{\ref{xxhyp2.4}}}
and suppose that $R$ is CM. Then the following are equivalent:
\begin{enumerate}
\item[(1)]
There is a natural isomorphism of algebras $R\# H\cong \End_{R^H}(R)$.
\item[(2)]
$\Pty(R,H)\geq 2$.
\item[(3)]
The $H$-action on $A$ is  h.small.
\end{enumerate}
\end{theorem}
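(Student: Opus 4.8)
The plan is to verify the hypotheses of Theorem \ref{xxthm2.3} (and its equivalence with the pertinency and homological smallness conditions via Theorem \ref{xxthm2.7}) for the pair $(A,B)=(R^H, R\#H)$, using the congeniality of $R$ to supply the crucial torsitivity-type input. First I would observe that, since $R$ is congenial, Proposition \ref{xxpro4.8} guarantees that both $B=R\#H$ and $A=R^H$ are congenial, and that $R\cong eB$ carries a suitable $D$-order making it an $(A_D,B_D)$-bimodule that is $D$-central and finitely generated on both sides. This reduces the setup to one in which the mod-$p$ reduction machinery applies cleanly.

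Next I would check Hypothesis \ref{xxhyp2.4}(1)--(6) with $\partial=\GKdim$. Conditions (1)--(4) are immediate: $R$ is noetherian, $H$ is as prescribed, the identifications $R^H=eBe$ and $R=Be$ hold, and $\GKdim$ is an exact dimension function with $\GKdim R=d\geq 2$. For (5), that $\GKdim$ is $B/R$-hereditary, I would invoke Lemma \ref{xxlem1.2}(3), since $R$ and $\gr_{\mathcal F}R$ are locally finite noetherian and $B_R$, $(\gr_{\mathcal F}B)_{\gr_{\mathcal F}R}$ are finitely generated by the congeniality data from Proposition \ref{xxpro4.8}(1). Condition (6), that $R$ is $\partial$-CM, is exactly the standing hypothesis that $R$ is CM. Having verified Hypothesis \ref{xxhyp2.4}(1)--(6), Lemma \ref{xxlem2.6} then yields Hypothesis \ref{xxhyp2.1} in full.

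The heart of the argument is verifying the key condition \eqref{E2.3.1}, namely that $\GKdim$ satisfies $\gamma_{d-2,1}(eB)$. This is precisely where congeniality pays off: Theorem \ref{xxthm4.9} establishes the stronger statement that $\gamma_{n,j}(eB)$ holds for \emph{all} $n$ and $j$, so in particular $\gamma_{d-2,1}(eB)$ holds. I expect this to be the only substantive step, and it is already discharged by the preceding theorem, whose proof rests on reducing to finite-field quotients $F$ of $D$ where $R_F$ is affine noetherian PI, applying the PI torsitivity of Lemma \ref{xxlem3.1}(2) together with $\Kdim=\GKdim$ over such $F$, and then transporting the bound back to characteristic zero via $\GKdim$-stability (Proposition \ref{xxpro4.7}) and the Tor base-change Lemma \ref{xxlem4.2}(2).

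With all hypotheses of Theorem \ref{xxthm2.3} in place, the equivalence of its parts (ii) and (iii) gives that the natural map $\varphi\colon R\#H\to \End_{R^H}(R)$ of \eqref{E2.1.1} is an isomorphism if and only if $\GKdim (B/(BeB))\leq d-2$. To close the loop with the pertinency and homological smallness formulations, I would apply Theorem \ref{xxthm2.7}: since $R$ is CM with $\GKdim R\geq 2$, conditions (1), (2), (3) of the present statement correspond exactly to its parts (i), (ii), (iii). The only point requiring mild care is matching the inequality $\GKdim(B/(BeB))\leq d-2$ with the pertinency bound $\Pty(R,H)\geq 2$, but this is the content of the CM reformulation used throughout and is handled uniformly by Theorem \ref{xxthm2.7}. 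Thus the three statements are equivalent, completing the proof.
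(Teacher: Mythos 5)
Your overall route---verifying Hypothesis \ref{xxhyp2.4}, passing to Hypothesis \ref{xxhyp2.1} via Lemma \ref{xxlem2.6}, supplying \eqref{E2.3.1} from Theorem \ref{xxthm4.9}, and then applying Theorem \ref{xxthm2.3}---is exactly the paper's intended argument (the paper states this theorem as a consequence of Theorems \ref{xxthm2.3} and \ref{xxthm4.9}). However, your closing step has a genuine flaw: you invoke Theorem \ref{xxthm2.7} to match conditions (1), (2), (3) with its parts (i), (ii), (iii), but Theorem \ref{xxthm2.7} requires $R$ to be \emph{locally finite and graded} over the base field, whereas a congenial algebra is only filtered. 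The motivating example $R=U(\mathfrak{g})$ is not graded, so Theorem \ref{xxthm2.7} simply does not apply; indeed, the entire point of the congenial/mod-$p$ machinery of Section 4 is to substitute for Theorem \ref{xxthm2.7} in the ungraded setting, which is why the paper cites only Theorems \ref{xxthm2.3} and \ref{xxthm4.9} here.

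The repair uses only ingredients you have already verified. Theorem \ref{xxthm2.3}(ii)$\Leftrightarrow$(iii) says $\varphi$ is an isomorphism if and only if $\GKdim\bigl(B/(BeB)\bigr)\leq d-2$. Since $BeB$ is the two-sided ideal generated by $e=1\#\inth$, we have $B/(BeB)=(R\#H)/I$, so this inequality is literally $\Pty(R,H)\geq 2$; that gives (1)$\Leftrightarrow$(2) with no appeal to Theorem \ref{xxthm2.7}. For (2)$\Leftrightarrow$(3), use the $B/R$-hereditary property of $\GKdim$ (Hypothesis \ref{xxhyp2.4}(5), which you checked via Lemma \ref{xxlem1.2}(3)) to identify $\GKdim$ of $(R\#H)/I$ as a right $B$-module with its $\GKdim$ as a right $R$-module, and then apply the CM property of $R$: for the nonzero f.g.\ right $R$-module $M=(R\#H)/I$ one has $j(M)+\GKdim(M)=d$, so $j(M)\geq 2$ if and only if $\GKdim(M)\leq d-2$. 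This is the same two-line argument used in the proof of Theorem \ref{xxthm3.3}, and it completes the equivalences without any gradedness assumption.
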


There are ample examples of congenial algebras, one of which is
the universal enveloping algebra of any finite dimensional
Lie algebra.

\begin{lemma}
\label{xxlem4.11}
The universal enveloping algebra
$R:=U(\mathfrak g)$ of a finite dimensional
Lie algebra ${\mathfrak g}$ is congenial.
\end{lemma}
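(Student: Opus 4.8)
The plan is to verify the five conditions of Definition \ref{xxdef4.3} directly for $R:=U(\mathfrak g)$, equipped with its standard PBW filtration. First I would fix a basis $x_1,\dots,x_n$ of $\mathfrak g$ and take $\mathcal F$ to be the filtration in which $F_iR$ is spanned by monomials of length $\leq i$ in the $x_s$. The associated graded ring is then the commutative polynomial ring: $\gr_{\mathcal F}R\cong \Bbbk[x_1,\dots,x_n]$ by the Poincar\'e--Birkhoff--Witt theorem. This immediately gives that $R$ is a noetherian, locally finite filtered algebra, so Definition \ref{xxdef4.3}(1) holds.

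For the order, I would let $D$ be the $\Bbbk_0$-affine subalgebra of $\Bbbk$ generated by the finitely many structure constants $c_{st}^u$ appearing in the brackets $[x_s,x_t]=\sum_u c_{st}^u x_u$ of $\mathfrak g$. Setting $\mathfrak g_D:=\bigoplus_s D x_s$, this is a Lie algebra over $D$ that is free of finite rank, and its universal enveloping algebra $R_D:=U_D(\mathfrak g_D)$ has a PBW $D$-basis consisting of the ordered monomials. Hence $R_D$ is free over $D$ with $R_D\otimes_D\Bbbk=R$, so $R_D$ is an order of $R$ that is again noetherian, locally finite, and filtered by the induced filtration; this gives Definition \ref{xxdef4.3}(2). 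The PBW basis also shows $\gr_{\mathcal F}R_D\cong D[x_1,\dots,x_n]$, which is an order of $\gr_{\mathcal F}R=\Bbbk[x_1,\dots,x_n]$, settling (3). Since a polynomial ring over any commutative noetherian ring is strongly noetherian (by the Hilbert basis theorem, $D[x_1,\dots,x_n]\otimes_D F=F[x_1,\dots,x_n]$ is noetherian for every noetherian commutative $D$-algebra $F$) and is clearly locally finite graded over $D$, condition (4) follows.

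The remaining point is Definition \ref{xxdef4.3}(5): if $F$ is a finite-field factor ring of $D$, then $R_D\otimes_D F=U_F(\mathfrak g_F)$ must be an affine noetherian PI algebra over $F$. Affineness is clear since it is generated by the images of $x_1,\dots,x_n$, and noetherianity follows because its associated graded ring $F[x_1,\dots,x_n]$ is noetherian. The essential observation is that a finite field $F$ has positive characteristic $p$, and the enveloping algebra of a finite-dimensional Lie algebra in characteristic $p$ is a finite module over its center: the $p$-th powers $x_s^p$ (more precisely the restricted-power central elements of the restricted enveloping structure) generate a central polynomial subalgebra over which $U_F(\mathfrak g_F)$ is finitely generated, so $U_F(\mathfrak g_F)$ satisfies a polynomial identity. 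I expect this characteristic-$p$ PI property to be the main obstacle, since it is exactly where the hypothesis $\mathrm{char}\,\Bbbk=0$ in Theorem \ref{xxthm0.4} is converted, via mod-$p$ reduction, into the PI machinery of Section \ref{xxsec3}; one must be careful that the center contains enough elements (the $p$-powers together with a suitable restriction map) to force finite rank. With all five conditions in hand, $R=U(\mathfrak g)$ is congenial.
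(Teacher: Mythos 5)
Your proposal is correct and takes essentially the same route as the paper's proof: the standard PBW filtration, the order $R_D=U(\mathfrak{g}_D)$ over the subring $D$ generated by the structure constants, freeness of $R_D$ via the PBW/associated-graded argument, strong noetherianity of $D[x_1,\dots,x_n]$ for condition (4), and the characteristic-$p$ finiteness of the enveloping algebra over its center for condition (5), which is exactly the fact the paper quotes from Jacobson. One small correction to your sketch of (5): the central elements are not the bare powers $x_s^p$ (these are central only when $\mathrm{ad}(x_s)^p=0$, and $\mathfrak{g}_F$ need not be restricted), but rather $p$-polynomials $f_s(x_s)$ chosen so that $f_s(\mathrm{ad}(x_s))=0$ on $\mathfrak{g}_F$, whence $\mathrm{ad}(f_s(x_s))=f_s(\mathrm{ad}(x_s))=0$; this is precisely Jacobson's construction and yields the affine central subalgebra over which $U_F(\mathfrak{g}_F)$ is a finite module.
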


\begin{proof} We need to show (1-5) in Definition \ref{xxdef4.3}.

(1) $R$ has a standard filtration ${\mathcal F}$ defined by
$F_i R=(\Bbbk+{\mathfrak g})^i$ for all $i\geq 0$. It is well-known
that $R$ is a noetherian locally finite filtered algebra.

(2) Pick any $\Bbbk$-linear basis $\{x_1,\cdots,x_n\}$ of the Lie algebra
${\mathfrak g}$ and consider the Lie product
$[x_i, x_j]=\sum_{s=1}^n c_{i,j}^s x_s$ with coefficients
$c_{i,j}^s\in \Bbbk$. Let $D$ be the $\Bbbk_0$-subalgebra of
$\Bbbk$ generated by these $c_{i,j}^s$.

Let ${\mathfrak g}_D=\bigoplus_{s=1}^n D x_i$. Then ${\mathfrak g}_D$
is a $D$-Lie algebra with ${\mathfrak g}_D\otimes_D \Bbbk={\mathfrak g}$.
One can define the universal enveloping algebra $R_D:=U({\mathfrak g}_D)$
to be
$$D\langle x_1,\cdots,x_n\rangle/([x_i, x_j]=\sum_{s=1}^n c_{i,j}^s x_s,\forall
i,j).$$
Then $U({\mathfrak g}_D)$ has a filtration ${\mathcal F}'$ which is
compatible with the filtration ${\mathcal F}$ such that $\gr_{{\mathcal F}'}
R_D$ is isomorphic to the commutative polynomial ring
$D[x_1,\cdots,x_n]$. Hence, $R_D$ is free over $D$ and $R_D$ is an order
of $R$. Therefore (2) holds.

The proof of (2) also shows that $\gr_{{\mathcal F}'} R_D$ is an order
of $\gr_{{\mathcal F}} R$, which is (3).

(4) This follows from the fact that $\gr_{{\mathcal F}'} R_D$ is the
commutative polynomial ring over $D$.

(5) One can check that $R_D\otimes_D F$ is the universal enveloping algebra
$U({\mathfrak g}_D\otimes_D F)$. Since $F$ is a finite field, 
${\rm{char}}\; F>0$. By \cite{Ja}, $U({\mathfrak g}_D\otimes_D F)$
is a finite module over its affine center. Therefore it is an affine 
noetherian PI domain over $F$.
\end{proof}

In order to prove Theorem \ref{xxthm0.4}, we need a further lemma.

\begin{lemma}
\label{xxlem4.12}
Let $V$ be a finite dimensional $\Bbbk$-vector space 
of dimension at least $2$,  and
$G$ be a finite subgroup of ${\text{GL}}(V)$
acting on the polynomial ring $R:=\Bbbk[V]$
naturally. Then the following are equivalent.
\begin{enumerate}
\item[(1)]
$G\subseteq {\rm{GL}}(V)$ is small.
\item[(2)]
The natural algebra map $R\ast G\to \End_{R^G}(R)$ is an
isomorphism of algebras.
\item[(3)]
$\Pty(R,G)\geq 2$.
\item[(4)]
The $G$-action on $R$ is h.small.
\end{enumerate}
\end{lemma}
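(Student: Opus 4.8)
The plan is to split the four conditions into two groups: conditions (2), (3), (4) are the ``homological'' equivalences that should drop out of the general machinery of Section~\ref{xxsec2}, while their equivalence with the purely group-theoretic condition (1) is the classical geometric content of the Auslander theorem and is the only place where the hypothesis $R=\Bbbk[V]$ is used essentially. So first I would prove (2)$\Leftrightarrow$(3)$\Leftrightarrow$(4) abstractly, and then hook condition (1) onto the chain through (3).

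For the homological equivalences I would simply invoke Theorem~\ref{xxthm2.7}. Setting $H=\Bbbk G$, the skew group algebra $R\ast G$ is exactly the smash product $R\# H$ and $R^G=R^H$. Since $G\subseteq\operatorname{GL}(V)$ is finite, $H=\Bbbk G$ is a semisimple Hopf algebra with normalized integral $\inth=|G|^{-1}\sum_{g\in G}g$, and the $H$-action on $R$ is inner faithful because $G$ acts faithfully on $V$. The polynomial ring $R=\Bbbk[V]$ is noetherian, connected graded, locally finite and Cohen--Macaulay with $\GKdim R=\dim_{\Bbbk}V\geq 2$. Thus Hypothesis~\ref{xxhyp2.4}(1--3) and the standing assumptions of Theorem~\ref{xxthm2.7} all hold, and that theorem gives the equivalence of (2), (3) and (4) once one matches its (i),(ii),(iii) with our (2),(3),(4).

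It remains to link (1) with these, which I would do through (3). The key computation is the support of $(R\# H)/I$, where $I$ is the two-sided ideal generated by $e=1\#\inth$. Away from ramification $R$ is $G$-Galois over $R^G$, so there the natural map $\varphi\colon R\# G\to\End_{R^G}(R)$ is an isomorphism; hence $(R\# H)/I$ is supported on the ramification locus, whose preimage in $\operatorname{Spec}R=V$ is the $G$-stable non-free locus $\bigcup_{1\neq g\in G}V^g$, where $V^g=\{v\in V\mid gv=v\}$. This gives
\[
\GKdim\bigl((R\# H)/I\bigr)=\max_{1\neq g\in G}\dim_{\Bbbk}V^g,
\]
whence, straight from the definition of pertinency,
\[
\Pty(R,G)=\dim_{\Bbbk}V-\max_{1\neq g\in G}\dim_{\Bbbk}V^g=\min_{1\neq g\in G}\operatorname{codim}_V V^g .
\]
A nontrivial $g$ with $\operatorname{codim}_V V^g=1$ is precisely a pseudo-reflection, so $G$ is small if and only if $\operatorname{codim}_V V^g\geq 2$ for all $1\neq g\in G$, i.e. if and only if $\Pty(R,G)\geq 2$. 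This is (1)$\Leftrightarrow$(3), closing the cycle.

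The step I expect to be the main obstacle is the dimension computation for $(R\# H)/I$: although the reduction to the non-free locus is geometrically transparent, upgrading ``supported on $\bigcup_{g\neq 1}V^g$'' to the exact equality $\GKdim\bigl((R\# H)/I\bigr)=\max_{g\neq 1}\dim_{\Bbbk}V^g$ requires controlling the two-sided ideal $I=(R\# G)e(R\# G)$ carefully, since the graded components of a single generator $res$ with respect to $R\# G=\bigoplus_{g}Rg$ are linked and do not split off component by component. This is exactly the content packaged in Lemma~\ref{xxlem7.2}, which I would cite for the equivalence (1)$\Leftrightarrow$(4); alternatively one can argue by localizing at the height-one and height-two primes of $R^G$, using purity of the branch locus in codimension one together with reflexivity to compare $R\# G$ with $\End_{R^G}(R)$ directly.
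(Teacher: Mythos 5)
Your handling of (2)$\Leftrightarrow$(3)$\Leftrightarrow$(4) via Theorem \ref{xxthm2.7} with $H=\Bbbk G$ is exactly the paper's argument and is fine. The problem is how you attach (1). Your whole plan rests on the exact formula $\GKdim\bigl((R\# H)/I\bigr)=\max_{1\neq g\in G}\dim_{\Bbbk}V^g$, i.e.\ $\Pty(R,G)=\min_{1\neq g}\operatorname{codim}_V V^g$, and you do not prove it. The ``supported on the ramification locus'' half (away from $\bigcup_{g\neq 1}V^g$ the idempotent $e$ is full) only gives the inequality $\Pty(R,G)\geq \min_{1\neq g}\operatorname{codim}_V V^g$, hence (1)$\Rightarrow$(3); the implication (3)$\Rightarrow$(1) needs the reverse inequality, namely that at the generic point of a reflection hyperplane $V^g$ the quotient $(R\# H)/I$ does \emph{not} vanish, i.e.\ that $e$ fails to be full at ramified height-one primes. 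That is precisely the hard content of the Auslander theorem, not a routine support computation, and you defer it entirely. Worse, the citation you offer to close this gap is circular: Lemma \ref{xxlem7.2} is not an independent result --- its proof in the paper consists of the observation that quasi-reflections of $\Bbbk[V]$ agree with pseudo-reflections, followed by ``See Lemma \ref{xxlem4.12} for other parts.'' So Lemma \ref{xxlem7.2} is a corollary of the very statement you are proving and cannot be invoked here.

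For comparison, the paper never computes the pertinency of $I$ directly. It proves (1)$\Rightarrow$(2) by citing the classical commutative Auslander theorem \cite{Au1, Au2} (or \cite[Theorem 4.2]{IT}), and (2)$\Rightarrow$(1) by a short self-contained argument: if $G$ contains a pseudo-reflection generating a subgroup $W$, then $R^W$ is a polynomial ring by Shephard--Todd--Chevalley, so $R\cong (R^W)^{\oplus d}$ with $d\geq 2$ as $R^G$-modules, whence $\End_{R^G}(R)\cong M_d(\End_{R^G}(R^W))$ has no factor ring of $\Bbbk$-dimension less than $d^2$, while $R\ast G$ has $\Bbbk$ as a factor ring; so the two algebras cannot be isomorphic. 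Your secondary fallback (localizing at height-one primes, purity of the branch locus, reflexivity) is essentially a re-derivation of the classical theorem; carried out in full it would indeed work and would even yield your exact pertinency formula, but as written it is a one-clause sketch of the entire classical proof. As it stands, the proposal has a genuine gap at the only step where the specific geometry of $\Bbbk[V]$ enters.
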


\begin{proof} (1) $\Longrightarrow$ (2) This is the commutative
Auslander theorem \cite{Au1, Au2}. See \cite[Theorem 4.2]{IT}
for a more recent proof.

(2) $\Longrightarrow$ (1) Suppose $G$ is not small. Let $W$ be the subgroup
of $G$ generated by a pseudo-reflection in $G$. Then $W$ is nontrivial and
$R^W$ is a commutative polynomial ring by Shephard-Todd-Chevalley theorem.
In this case $R$ is a free module over $R^W$ of rank at least 2 and write
$R\cong A^{\oplus d}$ where $A=R^W$ and $d\geq 2$. Since
$W$ is a subgroup of $G$, $R^G\subseteq A$. Thus $\End_{R^G}(R)\cong \End_{R^G}(A^{\oplus d})
=M_d(\End_{R^G}(A))$. This implies that every factor ring of
$\End_{R^G}(R)$ has dimension at least $d^2>1$. However, $R\ast G$ has a factor
ring $\Bbbk$ by sending $g\mapsto 1$ and $R_{\geq 1}\mapsto 0$. Therefore
$R\ast G\not\cong \End_{R^G}(R)$. The assertion follows.

(2) $\Leftrightarrow$ (3) This follows from Theorem \ref{xxthm2.7}.

(3) $\Leftrightarrow$ (4) This follows from  the fact that $\Bbbk[V]$
is CM.
\end{proof}

\begin{proof}[Proof of Theorem \ref{xxthm0.4}]
As in the proof of Lemma \ref{xxlem4.11}, $R$ is congenial and
has a standard filtration ${\mathcal F}$. The $G$-action on $R$
induces naturally a $G$-action on $\gr_{\mathcal F} R\cong
\Bbbk[x_1,\cdots,x_n]$. Then, by Proposition \ref{xxpro3.6} and Lemma \ref{xxlem4.12},
$$\Pty(R,G)\geq \Pty(\gr_{\mathcal F} R, G)\geq 2.$$
 It is well known
that $U({\mathfrak g})$ is CM. The assertion follows from
Theorem \ref{xxthm4.10}.
\end{proof}

To prove Corollary \ref{xxcor0.5} we need the following easy
observation.

\begin{lemma}
\label{xxlem4.13}
Let ${\mathfrak g}$ be a finite dimensional Lie
algebra such that it is not a semidirect product
${\mathfrak g}'\ltimes \Bbbk x$ for a $1$-dimensional
ideal $\Bbbk x$. Then every finite subgroup $G\subseteq
\Aut_{Lie}({\mathfrak g})$ is small.
\end{lemma}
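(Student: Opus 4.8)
The plan is to prove the contrapositive: if $G\subseteq \Aut_{Lie}({\mathfrak g})$ is \emph{not} small, then ${\mathfrak g}$ decomposes as a semidirect product ${\mathfrak g}'\ltimes \Bbbk x$ with $\Bbbk x$ a $1$-dimensional ideal. So I would start from a pseudo-reflection $g\in G$, that is, a non-identity element of $\mathrm{GL}({\mathfrak g})$ fixing a hyperplane ${\mathfrak h}$ pointwise; since $G$ is finite, $g$ automatically has finite order. The first step is to pin down the eigenstructure of $g$ acting on the underlying vector space ${\mathfrak g}$. Because $g$ has finite order and $\ch \Bbbk=0$, its minimal polynomial divides $t^N-1$ and is separable, so $g$ is semisimple, with ${\mathfrak h}$ as its $1$-eigenspace (of codimension one). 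As $g$ acts $\Bbbk$-linearly on the $1$-dimensional quotient ${\mathfrak g}/{\mathfrak h}$ it does so by a scalar $\lambda\in\Bbbk$, and this $\lambda\neq 1$ is the unique remaining eigenvalue of $g$. Hence $g$ is diagonalizable over $\Bbbk$ and ${\mathfrak g}={\mathfrak h}\oplus \Bbbk x$, where $x$ spans the (one-dimensional) $\lambda$-eigenspace.

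Next I would record two structural facts. Since $g$ is a Lie algebra automorphism, its fixed space is a subalgebra: $g([h_1,h_2])=[g(h_1),g(h_2)]=[h_1,h_2]$ for $h_1,h_2\in{\mathfrak h}$, so ${\mathfrak h}$ will serve as ${\mathfrak g}'$. The key computation is that $\Bbbk x$ is an ideal. For $h\in{\mathfrak h}$ one has
$$ g([h,x]) = [g(h),g(x)] = [h,\lambda x] = \lambda\,[h,x], $$
so $[h,x]$ lies in the $\lambda$-eigenspace of $g$, which is exactly $\Bbbk x$ because $\lambda\neq 1$ occurs with multiplicity one. Combined with $[x,x]=0$ and the decomposition ${\mathfrak g}={\mathfrak h}+\Bbbk x$, this yields $[{\mathfrak g},x]\subseteq \Bbbk x$, i.e. $\Bbbk x$ is a $1$-dimensional ideal. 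Therefore ${\mathfrak g}={\mathfrak h}\ltimes \Bbbk x$, contradicting the hypothesis, and the contrapositive is established.

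The argument is short, and the only delicate points are the field-theoretic ones in the first step, which I expect to be the main (though mild) obstacle: one must check that the nontrivial eigenvalue $\lambda$ genuinely lies in $\Bbbk$—so that $x$ is defined over $\Bbbk$ and ${\mathfrak g}={\mathfrak h}\oplus\Bbbk x$ is a decomposition of $\Bbbk$-Lie algebras—and that $g$ is semisimple rather than merely fixing a hyperplane. Both follow from $\ch\Bbbk=0$ together with the $\Bbbk$-linear action of $g$ on the line ${\mathfrak g}/{\mathfrak h}$ (equivalently, $\lambda=\det g\in\Bbbk$). Everything else reduces to the one-line eigenvalue computation above.
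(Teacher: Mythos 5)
Your proof is correct and follows essentially the same route as the paper: take a pseudo-reflection $g$, split ${\mathfrak g}$ as the fixed hyperplane plus the $\lambda$-eigenline $\Bbbk x$, and check that the hyperplane is a subalgebra and $\Bbbk x$ is an ideal via the eigenvalue computation $g([h,x])=\lambda[h,x]$. The paper's proof states this decomposition and leaves the verification as "easy to show"; you have simply supplied the details (semisimplicity of $g$ in characteristic zero and $\lambda\in\Bbbk$) that the paper takes for granted.
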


\begin{proof} If $G$ is not small, there is a $1\neq g\in G$
which is a pseudo-reflection. Then there is a  decomposition
${\mathfrak g}=V\oplus \Bbbk x$ such that $g\mid_{V}$ is the
identity and $g(x)=a x$ for some $a\neq 1$. Then it is easy
to show that $V$ is  a Lie subalgebra of
${\mathfrak g}$ and $\Bbbk x$ is a 1-dimensional
Lie ideal of ${\mathfrak g}$.
Therefore ${\mathfrak g}=V\ltimes \Bbbk x$, a contradiction.
\end{proof}

\begin{proof}[Proof of Corollary 0.5] Under the hypothesis,
every finite subgroup $G\subseteq \Aut_{Lie}({\mathfrak g})$
is small by Lemma \ref{xxlem4.13}. The assertion follows
from Theorem \ref{xxthm0.4}.
\end{proof}

\section{Twisting and skew polynomial rings}
\label{xxsec5}

In the rest of the paper we assume that $\Bbbk$ is a field.

Let $\Gamma$ be a group and let $A$ be an ${\mathbb N}\times
\Gamma$-graded algebra. Assume that $A$ is locally finite when considered
as an ${\mathbb N}$-graded algebra. We first recall
$\Gamma$-twisting systems and twisted algebras from
\cite{Zh2}.

\begin{definition}
\label{xxdef5.1}
Let $A$ be an ${\mathbb N}\times \Gamma$-graded algebra.
A set of ${\mathbb N}\times \Gamma$-graded algebra automorphisms of $A$, denoted by
$\tau:=\{\tau_{\gamma}\mid \gamma\in \Gamma\}$, is called
a {\it twisting system} of $A$ if
$$\tau_{\gamma_1}\tau_{\gamma_2}=\tau_{\gamma_1\gamma_2}$$
for all $\gamma_1,\gamma_2\in \Gamma$.
\end{definition}

The original definition of a twisting system  \cite[Definition 2.1]{Zh2}
is slightly more general.
Given a twisting system, we define a new multiplication of
$A$ by
$$ x \ast y=x\tau_{\gamma} (y)$$
if $x$ is homogeneous of degree $(n,\gamma)$. By
\cite[Proposition and Definition 2.3]{Zh2}, the {\it twisted
algebra} $A^{\tau}:= (A,\ast)$ is another ${\mathbb N}\times
\Gamma$-graded algebra and $A^{\tau}=A$ as a graded
$\Bbbk$-space.

\begin{example}
\label{xxex5.2}
Let $\{p_{ij}\mid 1\leq i<j \leq n\}$ be a subset of
$\Bbbk^{\times}$ and let $A$ be the skew polynomial ring
$\Bbbk_{p_{ij}}[x_1,\cdots,x_n]$ generated by
$x_1,\cdots,x_n$ and subject to the relations
\begin{equation}
\label{E5.2.1}\tag{E5.2.1}
x_j x_i=p_{ij}x_i x_j
\end{equation}
for all $i<j$.  Define $\deg x_i=(1, e_i)$ where
$e_i$ is the $i$th unit vector in ${\mathbb Z}^n$.
Then $A$ is ${\mathbb N}\times \Gamma$-graded
where $\Gamma={\mathbb Z}^n$. By \cite[p. 310]{Zh2},
$A$ is a twisted algebra of the commutative polynomial
ring $\Bbbk[x_1,\cdots,x_n]$.
\end{example}

Now we put a Hopf action into the picture.

\begin{hypothesis}
\label{xxhyp5.3}
\begin{enumerate}
\item[(1)]
Let $A$ be an ${\mathbb N}\times \Gamma$-graded algebra
with a twisting system $\tau=\{\tau_{\gamma}\mid \gamma\in \Gamma\}$.
\item[(2)]
Let $H$ be a semisimple Hopf algebra acting on $A$ and
preserving the ${\mathbb N}\times \Gamma$-grading.
\item[(3)]
The $H$-action commutes with $\tau$ in the sense that
$$h \cdot (\tau_{\gamma}(x))=\tau_{\gamma} (h\cdot x)$$
for all $h\in H$, $\gamma\in \Gamma$ and $x\in A$.
\end{enumerate}
\end{hypothesis}

The following lemma is easy and we skip most of the proof.

\begin{lemma}
\label{xxlem5.4}
Retain Hypothesis {\rm{\ref{xxhyp5.3}}}.
\begin{enumerate}
\item[(1)]
The $H$-action on $A$ induces naturally an $H$-action on $A^{\tau}$
which preserves the grading.
\item[(2)]
The smash product $A\# H$ is an ${\mathbb N}\times \Gamma$-graded
algebra where $\deg (1\#h)=(0,1_{\Gamma})$ for all $h\in H$.
\item[(3)]
The $\tau$ extends to a twisting system $\tau'$ of
$A\# H$ by $\tau'_{\gamma}(x\# h)=\tau_{\gamma}(x)\# h$
for all $x\in A$ and $h\in H$, and $(A\# H)^{\tau'}=(A^{\tau})\# H$.
\item[(4)]
Let $e=1\# \inth$. Then $\tau'_{\gamma}(e)=e$ for all
$\gamma\in \Gamma$. As a consequence, $\tau'$ induces
a twisting system $\tau''$ of $(A\# H)/(e)$ such that
$$((A\#H)/(e))^{\tau''}\cong (A\#H)^{\tau'}/(e)\cong (A^{\tau}\# H)/(e)$$
as graded algebras.
\item[(5)]
Assume that both $A$ and $A^{\tau}$ are noetherian and locally finite
as ${\mathbb N}$-graded algebras.
Then $\Pty(A,H)=\Pty(A^{\tau},H)$.
\end{enumerate}
\end{lemma}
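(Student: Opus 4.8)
The plan is to reduce the equality of pertinencies to the single fact that, for a noetherian locally finite $\mathbb{N}$-graded algebra, the GK-dimension is determined by the Hilbert series via \eqref{E1.1.6}, combined with the observation that twisting never alters the underlying graded vector space (so in particular preserves all Hilbert series). First I would unwind the definition: by Definition \ref{xxdef0.1}(1) with $\partial=\GKdim$,
$$\Pty(A,H)=\GKdim A-\GKdim\bigl((A\# H)/(e)\bigr),$$
where $e=1\#\inth$, and likewise $\Pty(A^{\tau},H)=\GKdim A^{\tau}-\GKdim\bigl((A^{\tau}\# H)/(e)\bigr)$. So it suffices to match the two minuends and the two subtrahends separately.

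Next I would verify that \eqref{E1.1.6} is legitimately available for all four algebras in sight. Since $A$ and $A^{\tau}$ are assumed noetherian and locally finite as $\mathbb{N}$-graded algebras and $H$ is finite dimensional, both $A\# H$ and $A^{\tau}\# H$ are noetherian and locally finite as $\mathbb{N}$-graded algebras; their quotients $(A\# H)/(e)$ and $(A^{\tau}\# H)/(e)$ are therefore noetherian and locally finite as well. Here the relevant point is that $e$ lies in $\mathbb{N}$-degree $0$, because $\deg(1\#h)=(0,1_{\Gamma})$ by Lemma \ref{xxlem5.4}(2), so quotienting by $(e)$ respects the $\mathbb{N}$-grading. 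Consequently each of the four GK-dimensions may be computed from the $\mathbb{N}$-graded Hilbert series of the corresponding algebra, viewed as a module over itself, using \eqref{E1.1.6}.

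Then I would compare Hilbert series. For the first terms, $A^{\tau}=A$ as a graded $\Bbbk$-space, so $A$ and $A^{\tau}$ have the same Hilbert series, giving $\GKdim A=\GKdim A^{\tau}$ by \eqref{E1.1.6}. For the second terms, Lemma \ref{xxlem5.4}(4) supplies a graded-algebra isomorphism $\bigl((A\# H)/(e)\bigr)^{\tau''}\cong (A^{\tau}\# H)/(e)$; since the twist $\tau''$ leaves the underlying graded vector space unchanged, $(A\# H)/(e)$ and $(A^{\tau}\# H)/(e)$ again share the same Hilbert series, whence $\GKdim\bigl((A\# H)/(e)\bigr)=\GKdim\bigl((A^{\tau}\# H)/(e)\bigr)$ by \eqref{E1.1.6}. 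Subtracting the matched quantities yields $\Pty(A,H)=\Pty(A^{\tau},H)$.

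The only real obstacle is the bookkeeping in the second paragraph: I must be careful that passing to the smash product and then to the quotient by $(e)$ genuinely preserves both local finiteness and noetherianity as $\mathbb{N}$-graded objects, since that is precisely what licenses the use of \eqref{E1.1.6}. Everything else is formal, as twisting is an operation on a fixed graded vector space and hence is Hilbert-series invariant by construction.
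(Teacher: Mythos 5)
Your proposal is correct and follows essentially the same route as the paper's own proof, which likewise treats parts (1)--(4) as routine and proves (5) by observing that, under the noetherian and locally finite hypotheses, \eqref{E1.1.6} computes $\GKdim$ purely from Hilbert series, which twisting leaves unchanged, and then invokes part (4) to identify $(A^{\tau}\# H)/(e)$ with a twist of $(A\# H)/(e)$. Your version merely spells out the bookkeeping (homogeneity of $e$, local finiteness and noetherianity of the quotients) that the paper leaves implicit.
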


\begin{proof} (5) Since $A$ and $A^{\tau}$ are noetherian and locally finite
as ${\mathbb N}$-graded algebras, we can use \eqref{E1.1.6} to compute
the GK-dimension, this means that $\GKdim$ is independent of their algebra
structure. In particular, $\GKdim A=\GKdim A^{\tau}$ and
$\GKdim (A\#H)/(e)=\GKdim (A^{\tau}\# H)/(e)$ by part (4). The assertion
follows.
\end{proof}

Now we are ready to prove Theorem \ref{xxthm0.8}. An automorphism
$\phi$ of $\Bbbk_{p_{ij}}[x_1,\cdots,x_n]$ is called {\it diagonal}
if $\phi(x_i)=a_i x_i$ for some $a_i\in \Bbbk^{\times}$.

\begin{theorem}
\label{xxthm5.5}
Suppose ${\rm{char}}\; \Bbbk=0$.
Let $R$ be the ring $\Bbbk_{p_{ij}}[x_1,\cdots,x_n]$
for $n\geq 2$ and $\{p_{ij}\}\subseteq \Bbbk^{\times}$.
Let $G$ be a finite group of diagonal algebra automorphisms of $R$.
Then the following are equivalent.
\begin{enumerate}
\item[(1)]
$G\subseteq {\rm{GL}}(V)$ is small where $V=\bigoplus_{s=1}^n \Bbbk x_s$.
\item[(2)]
The $G$-action on $R$ is  h.small.
\item[(3)]
$\Pty(R,G)\geq 2$.
\item[(4)]
There is a natural isomorphism of algebras
$R\ast G\cong\End_{R^G}(R).$
\end{enumerate}
\end{theorem}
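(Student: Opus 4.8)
The plan is to obtain the three homological equivalences $(2)\Leftrightarrow(3)\Leftrightarrow(4)$ directly from the graded machinery of Section~\ref{xxsec2}, and to connect them to smallness $(1)$ by transporting the pertinency through the twisting that presents $R$ as a twist of the ordinary polynomial ring, where the commutative Auslander theorem (Lemma~\ref{xxlem4.12}) is available.

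First I would put $H=\Bbbk G$, which is semisimple because $\mathrm{char}\,\Bbbk=0$ and $G$ is finite; thus $R\ast G=R\#H$ and $R^G=R^H$. The skew polynomial ring $R=\Bbbk_{p_{ij}}[x_1,\dots,x_n]$ is noetherian, locally finite $\mathbb{N}$-graded, and (as a twist of the commutative polynomial ring, or by standard facts) CM with $\GKdim R=n\geq 2$. Since each $g\in G$ is diagonal, it fixes the $\mathbb{N}$-grading $\deg x_i=1$, so the $G$-action is by graded automorphisms. Hence Hypothesis~\ref{xxhyp2.4}(1--3) holds with $B=R\#H$ and $A=R^H$, and the extra graded hypotheses of Theorem~\ref{xxthm2.7} are satisfied. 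Applying Theorem~\ref{xxthm2.7} yields at once the equivalence of $(4)$ (its statement (i)), $(3)$ (its statement (ii)), and $(2)$ (its statement (iii)). This settles three of the four conditions.

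It remains to bring in smallness $(1)$. By Example~\ref{xxex5.2}, $R=A^{\tau}$ where $A=\Bbbk[x_1,\dots,x_n]$ is the commutative polynomial ring, viewed as $\mathbb{N}\times\Gamma$-graded with $\Gamma=\mathbb{Z}^n$ and $\deg x_i=(1,e_i)$, and $\tau=\{\tau_\gamma\}$ is the twisting system producing the relations \eqref{E5.2.1}. The same diagonal formulas $g(x_i)=a_i x_i$ define a $G$-action on $A$, and I would verify Hypothesis~\ref{xxhyp5.3}: the action preserves the $\mathbb{N}\times\Gamma$-grading (it rescales each $x_i$, hence fixes the multidegree $e_i$), and it commutes with $\tau$ since each $\tau_\gamma$, being an $\mathbb{N}\times\Gamma$-graded automorphism, also merely rescales the one-dimensional components $\Bbbk x_i$, and two such diagonal maps on the generators commute. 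With Hypothesis~\ref{xxhyp5.3} in force, Lemma~\ref{xxlem5.4}(5) gives $\Pty(R,G)=\Pty(A^{\tau},G)=\Pty(A,G)=\Pty(\Bbbk[V],G)$, where $V=\bigoplus_{s}\Bbbk x_s$. Then Lemma~\ref{xxlem4.12} shows $\Pty(\Bbbk[V],G)\geq 2$ if and only if $G\subseteq\mathrm{GL}(V)$ is small, so $(1)\Leftrightarrow(3)$. Combined with the previous paragraph, all four conditions are equivalent.

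The step I expect to be the main obstacle is the compatibility Hypothesis~\ref{xxhyp5.3}(3), namely that the $G$-action commutes with the twisting system. This is precisely where diagonality of $G$ is essential: a general graded automorphism need not preserve each line $\Bbbk x_i$ and so need not commute with $\tau_\gamma$, in which case Lemma~\ref{xxlem5.4}(5) would be unavailable and the transfer of pertinency between $R$ and $\Bbbk[V]$ would fail. A minor point to record carefully is that $\tau_\gamma$ really is diagonal on the $x_i$; this follows because $\tau_\gamma$ is an $\mathbb{N}\times\Gamma$-graded automorphism and each graded piece $\Bbbk x_i$ of multidegree $(1,e_i)$ is one-dimensional, forcing $\tau_\gamma(x_i)\in\Bbbk x_i$.
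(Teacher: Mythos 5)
Your proposal is correct and follows essentially the same route as the paper: the paper also deduces $(2)\Leftrightarrow(3)\Leftrightarrow(4)$ from Theorem \ref{xxthm2.7} (i.e., \cite[Theorem 3.5]{BHZ}) and then connects smallness to pertinency by checking Hypothesis \ref{xxhyp5.3} for $H=\Bbbk G$, invoking Lemma \ref{xxlem5.4}(5) to transfer $\Pty$ to the commutative polynomial ring, and finishing with Lemma \ref{xxlem4.12}. Your added verification that diagonality forces both the grading-preservation and the commutation with the twisting system is exactly the ``easily checked'' step the paper leaves implicit.
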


\begin{proof} (2) $\Leftrightarrow$ (3) $\Leftrightarrow$ (4) Follow
from \cite[Theorem 3.5]{BHZ}.

(1) $\Leftrightarrow$ (2)
By \cite[p.310]{Zh2}, $\Bbbk_{p_{ij}}[x_1,\cdots,x_n]$
is a twisted algebra of the commutative polynomial ring
by $\tau$ where $\tau$ consisting of diagonal algebra
automorphisms.

Let $H=\Bbbk G$. Then one can easily check Hypothesis \ref{xxhyp5.3}.
By Lemma \ref{xxlem5.4}(5), $\Pty(R,H)=\Pty(R^{\tau^{-1}},H)$
where $R^{\tau^{-1}}$ is the commutative polynomial ring.
The smallness of $G$ is only dependent on the $G$-action
on the vector space $V=\bigoplus_{s=1}^n \Bbbk x_s$. So we
can assume that $R$ is the commutative polynomial ring
$R^{\tau^{-1}}$. Therefore the assertion follows by Lemma
\ref{xxlem4.12}.
\end{proof}

\begin{proof}[Proof of Theorem \ref{xxthm0.8}]
When $p_{ij}$ are generic, every algebra automorphism
is diagonal by \cite{AlC}. The assertion follows by Theorem
\ref{xxthm5.5}.
\end{proof}

\section{Down-up algebras}
\label{xxsec6}

First we recall a definition, and refer to \cite{BR, KK, KMP}
for basic properties of down-up algebras.

\begin{definition} \cite{BR}
\label{xxdef6.1}
A {\it graded down-up algebra}, denoted by $A(\alpha, \beta)$,
with parameters $\alpha, \beta\in \Bbbk$, is generated by
$x$ and $y$, and subject to the relations
$$\begin{aligned}
x^2y&=\alpha xyx +\beta y x^2,\\
xy^2&=\alpha yxy +\beta y^2 x.
\end{aligned}
$$
\end{definition}

The universal enveloping algebra of the $3$-dimensional
Heisenberg Lie algebra is $A(2,-1)$, and there are other
interesting special cases, see \cite{BR, KK}. It is
well-known that $A(\alpha, \beta)$ is noetherian if
and only if it is AS regular if and only if
$\beta\neq 0$ \cite{KMP}.

Let $R$ be a noetherian graded
down-up algebra $A(\alpha, \beta)$ where $\beta\neq 0$.
We let $a$ and $b$ be the roots of
the ``character polynomial''
$$t^2 -\alpha t - \beta=0.$$
Then
$$ \Omega_1:= xy-a yx, \quad {\text{and}}\quad
\Omega_2:= xy-b yx$$
are normal regular elements in $R$. We recall
a result of Kirkman-Kuzmanovich.

\begin{lemma}\cite[Proposition 1.1]{KK}
\label{xxlem6.2}
Let $R$ be $A(\alpha, \beta)$. Then the group
of graded algebra automorphisms of $R$ is given by
$$
\Aut_{gr}(R)=\begin{cases}
{\rm{GL}}(\Bbbk^{\oplus 2}) & {\text{if }} (\alpha,\beta)=(0,1),\\
{\rm{GL}}(\Bbbk^{\oplus 2}) & {\text{if }} (\alpha,\beta)=(2,-1),\\
U:=\left\{ \begin{pmatrix} a_{11} &0\\0&a_{22}\end{pmatrix},
\begin{pmatrix} 0 &a_{12}\\a_{21}&0 \end{pmatrix}: a_{ij}\in \Bbbk^{\times}
\right\} & {\text{if }} \beta=-1,\alpha\neq 2,\\
O:=\left\{ \begin{pmatrix} a_{11} &0\\0&a_{22}\end{pmatrix}: a_{ij}\in \Bbbk^{\times}
\right\} & {\text{otherwise.}}
\end{cases}
$$
As a consequence, $\Aut_{gr}(R)$ can be realized as a subgroup of
${\rm{GL}}(\Bbbk^{\oplus 2})$.
\end{lemma}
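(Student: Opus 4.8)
The plan is to reduce the computation of $\Aut_{gr}(R)$ to a finite-dimensional linear algebra problem and then carry out a case analysis driven by a single unipotent computation. Since $R=A(\alpha,\beta)$ is connected graded, generated by $V:=\Bbbk x\oplus\Bbbk y$ in degree one with its only relations in degree three, any graded algebra endomorphism is determined by its restriction $g:=\phi|_{V}\in\End(V)$, and $\phi$ is a well-defined (iso)morphism precisely when $g$ is invertible and the induced map $g^{\otimes 3}$ on $V^{\otimes 3}$ preserves the two-dimensional relation space $W:=\Bbbk r_1\oplus\Bbbk r_2$, where $r_1=x^2y-\alpha xyx-\beta yx^2$ and $r_2=xy^2-\alpha yxy-\beta y^2x$. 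Thus first I would identify $\Aut_{gr}(R)$ with the stabilizer $\mathrm{Stab}_{\mathrm{GL}(V)}(W)$ inside $\mathrm{GL}(V)\cong\mathrm{GL}(\Bbbk^{\oplus 2})$; this already yields the final ``as a consequence'' clause for free.

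Next I would pin down the two obvious sources of symmetry. The diagonal torus $T\subseteq\mathrm{GL}(V)$ always lies in the stabilizer: $r_1$ and $r_2$ are weight vectors (of weights $(2,1)$ and $(1,2)$), so a diagonal $g=\mathrm{diag}(p,s)$ sends $r_1\mapsto p^2s\,r_1$ and $r_2\mapsto ps^2\,r_2$. Hence $O\subseteq\Aut_{gr}(R)$ in every case. For the anti-diagonal (Weyl) element $w\colon x\mapsto y,\ y\mapsto x$, a short computation gives $w(r_1)=y^2x-\alpha yxy-\beta xy^2$, which lies in $W=\Bbbk r_1\oplus\Bbbk r_2$ exactly when its coefficient vector is proportional to that of $r_2$; comparing coefficients on $(xy^2,yxy,y^2x)$ forces $\beta^2=1$ and $\alpha(1+\beta)=0$, i.e.\ $\beta=-1$ (any $\alpha$) or $(\alpha,\beta)=(0,1)$. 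Since $T$ already lies in the stabilizer, this pins down exactly when monomial (diagonal $+$ anti-diagonal) automorphisms occur.

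The heart of the argument is to show that nothing beyond monomial matrices survives unless $(\alpha,\beta)\in\{(0,1),(2,-1)\}$. Here I would invoke the classification of closed subgroups of $\mathrm{GL}_2$ containing the diagonal torus $T$: they are $T$, the monomial group $N(T)$, the two Borel subgroups, and $\mathrm{GL}_2$. Since every option other than $T$ and $N(T)$ contains a nontrivial unipotent, it suffices to determine when a one-parameter unipotent $u_t\colon x\mapsto x,\ y\mapsto tx+y$ preserves $W$. A direct expansion shows the $x^3$-coefficient of $u_t(r_1)$ equals $t(1-\alpha-\beta)$, so $\alpha+\beta=1$ is forced; granting this, $u_t(r_2)$ differs from $r_2$ by $t\bigl[(1-\alpha)x^2y+(1-\beta)xyx-yx^2\bigr]$, whose membership in $\Bbbk r_1$ forces $\beta^2=1$, hence $(\alpha,\beta)=(0,1)$ or $(2,-1)$. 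By the symmetry exchanging $x$ and $y$, the opposite unipotent is subject to the identical constraint, so either no nontrivial unipotent survives (ruling out both Borels) or both survive and $\mathrm{Stab}(W)=\mathrm{GL}_2$. Assembling the three computations yields $\mathrm{GL}_2$ for $(0,1)$ and $(2,-1)$; $N(T)=U$ when $\beta=-1,\ \alpha\neq2$; and $T=O$ otherwise. I expect the unipotent computation to be the main obstacle, since it is the step that must simultaneously rule out Borel-type stabilizers and isolate exactly the two degenerate parameter values; representation-theoretically this reflects that $W$ becomes a genuine $\mathrm{GL}(V)$-subrepresentation of $V^{\otimes3}\cong S^3V\oplus(S^{(2,1)}V)^{\oplus2}$ precisely at $(0,1)$ and $(2,-1)$.
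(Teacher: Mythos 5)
The paper offers no proof of this lemma: it is imported wholesale from \cite[Proposition 1.1]{KK}, where it is established by direct computation with an arbitrary $2\times 2$ matrix acting on the two cubic relations. So your argument is necessarily a different route, and its core is sound. Since $R$ is connected graded, generated in degree one, with its defining ideal generated by the two (linearly independent) cubics $r_1,r_2$, the identification of $\Aut_{gr}(R)$ with the stabilizer of $W=\Bbbk r_1\oplus\Bbbk r_2$ in $\mathrm{GL}(V)$ is correct, and your three computations check out: diagonal matrices scale $r_1,r_2$ by weights $(2,1)$ and $(1,2)$; the swap lands in the stabilizer iff $\beta^2=1$ and $\alpha(1+\beta)=0$; and for the unipotent, $u_t(r_1)=r_1+t(1-\alpha-\beta)x^3$ forces $\alpha+\beta=1$, after which $u_t(r_2)-r_2=t\bigl[(1-\alpha)x^2y+(1-\beta)xyx-yx^2\bigr]$ lies in $\Bbbk r_1$ iff $\beta^2=1$, isolating exactly $(0,1)$ and $(2,-1)$. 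What your route buys over the computation in \cite{KK} is structure: the classification of torus-containing subgroups replaces brute-force equations in four matrix entries and explains conceptually why only two parameter pairs admit extra symmetry.

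Two steps need patching. First, the appeal to ``the symmetry exchanging $x$ and $y$'' is not justified as written: the swap does \emph{not} preserve $W$ for general $(\alpha,\beta)$ (your own second computation says precisely this), so it cannot be used naively to transfer the constraint from $u_t$ to the lower unipotent. The fix is easy: either repeat the expansion for $x\mapsto x+ty$, $y\mapsto y$ (it yields literally the same conditions $\alpha+\beta=1$, $\beta^2=1$), or note that the swap composed with word reversal $u\otimes v\otimes w\mapsto w\otimes v\otimes u$ is a linear involution of $V^{\otimes 3}$ interchanging $r_1$ and $r_2$ (hence preserving $W$ for \emph{all} parameters) and commuting with the $\mathrm{GL}(V)$-action up to conjugation by the swap matrix $s$; thus the stabilizer is stable under $g\mapsto sgs^{-1}$ even when $s$ itself does not lie in it. Second, the subgroup classification you invoke is a statement over an algebraically closed field, whereas the lemma is over an arbitrary field $\Bbbk$: you should run the classification over $\bar\Bbbk$ (the parameter conditions are unchanged) and intersect back with $\mathrm{GL}_2(\Bbbk)$; alternatively, in the two exceptional cases you can bypass the classification entirely, since the diagonal matrices together with both unipotent one-parameter families already generate $\mathrm{GL}_2(\Bbbk)$ over any field.
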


\begin{lemma}
\label{xxlem6.3}
Let $R=A(\alpha,\beta)$ and suppose that $\beta\neq -1$ or
$(\alpha,\beta)=(2,-1)$. Let $G$ be a subgroup $\Aut_{gr}(R)$
sitting inside ${\rm{GL}}(\Bbbk^{\oplus 2})$ -- see Lemma {\rm{\ref{xxlem6.2}}}.
Then there is a normal regular element $\Omega=xy-a yx$ such that
$g(\Omega)=\det(g) \Omega$ for all $g\in G$.
\end{lemma}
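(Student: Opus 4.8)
The plan is to reduce to the two possible shapes of $\Aut_{gr}(R)$ under the present hypotheses and then to verify the identity $g(\Omega)=\det(g)\,\Omega$ directly on the generators $x,y$ in each case. By Lemma \ref{xxlem6.2}, $\Aut_{gr}(R)$ is realized as a subgroup of ${\rm{GL}}(\Bbbk^{\oplus 2})$ acting on $V=\Bbbk x+\Bbbk y=R_1$, and $\det(g)$ refers to this linear action. The hypothesis that $\beta\neq -1$ or $(\alpha,\beta)=(2,-1)$ precisely excludes the row of Lemma \ref{xxlem6.2} giving the group $U$, so that $\Aut_{gr}(R)$ is either the full ${\rm{GL}}(\Bbbk^{\oplus 2})$ (when $(\alpha,\beta)$ equals $(0,1)$ or $(2,-1)$) or the diagonal group $O$ (in all remaining cases). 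Since $G\subseteq\Aut_{gr}(R)$, it suffices to establish the claim for these two ambient groups, with the choice of the root $a$ depending on the case.

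First I would treat the diagonal case, where every $g\in O$ satisfies $g(x)=\lambda x$ and $g(y)=\mu y$ for some $\lambda,\mu\in\Bbbk^{\times}$, so $\det(g)=\lambda\mu$. Here either root works: for $\Omega=xy-a\,yx$ one computes
\[
g(\Omega)=g(x)g(y)-a\,g(y)g(x)=\lambda\mu\,xy-a\,\mu\lambda\,yx=\lambda\mu\,\Omega=\det(g)\,\Omega .
\]
Thus any subgroup of $O$ scales both $\Omega_1$ and $\Omega_2$ by the determinant, and one may take $\Omega=\Omega_1$.

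Next I would treat the two cases $(\alpha,\beta)=(0,1)$ and $(\alpha,\beta)=(2,-1)$, where $\Aut_{gr}(R)={\rm{GL}}(\Bbbk^{\oplus 2})$. The key point is that in both of these the character polynomial $t^2-\alpha t-\beta$ has $1$ as a root (its roots are $1,-1$ in the first case and a double root $1$ in the second), so I choose $a=1$ and $\Omega=\Omega_1=xy-yx=[x,y]$. Writing a general $g$ as $g(x)=px+qy$ and $g(y)=rx+sy$, bilinearity and antisymmetry of the commutator give
\[
g(\Omega)=[\,px+qy,\;rx+sy\,]=(ps-qr)[x,y]=\det(g)\,\Omega ,
\]
which is exactly the required identity for all $g\in {\rm{GL}}(\Bbbk^{\oplus 2})$, hence for all $g\in G$.

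The main obstacle is conceptual rather than computational: one must choose the correct root $a$ so that $\Omega$ is genuinely sent to a scalar multiple of itself, not merely interchanged with the other normal element. Indeed, an antidiagonal automorphism $g(x)=cy$, $g(y)=dx$ sends $\Omega_1$ to a scalar multiple of $\Omega_2$ (here $ab=-\beta=1$, so $a^{-1}=b$), so when $\beta=-1$ and $\alpha\neq 2$ --- exactly where such antidiagonal elements occur and no root equals $1$ --- neither $\Omega_1$ nor $\Omega_2$ is individually a $\det$-eigenvector. This is precisely the configuration ruled out by the hypothesis, which is why the case division above is exhaustive and why the choice $a=1$ succeeds in the two full-${\rm{GL}}$ cases while an arbitrary root suffices in the diagonal case.
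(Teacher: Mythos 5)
Your proof is correct and follows the same strategy as the paper's: rule out the $U$ case by hypothesis, take $a=1$ (a root of the character polynomial) in the cases $(\alpha,\beta)=(0,1)$ and $(2,-1)$, and take either root in the diagonal case. The only difference is that you spell out the verification (via bilinearity and antisymmetry of the commutator, and the diagonal computation) that the paper leaves as ``easy to check.''
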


\begin{proof} There are four cases according to
Lemma \ref{xxlem6.2}. The hypotheses rule out the
third case. In the first two cases
($(\alpha,\beta)=(0,1)$ or $(2,-1)$), we take
$a=1$. It is easy to check that $g(\Omega)=
\det(g)\Omega$
for all $g\in \Aut_{gr}(R)$. In the fourth
case, we take $a$ to be any of the roots of the
character polynomial. Since $g$ is represented
by a diagonal matrix, $g(\Omega)=\det(g)\Omega$ when
$g$ is in $\Aut_{gr}(R)$.
\end{proof}

When $\beta=-1$ and $\alpha\neq 2$, and if $g$ is
represented by a matrix of the form
$\begin{pmatrix} 0 &a_{12}\\a_{21}&0 \end{pmatrix}$,
then $g$ switches $xy-ayx$ and $xy-byx$ where $a$
and $b$ are two different roots of the character polynomial.
This is the case that we can not handle.

We are now ready to show Theorem \ref{xxthm0.6}.

\begin{proof}[Proof of Theorem \ref{xxthm0.6}]
It is well-known that $R$ is CM. By Theorem
\ref{xxthm2.7}, it suffices to show that $\Pty(R,G)\geq 2$.
The idea of the proof is that we construct some filtration of the
down-up algebra $R$ and apply Proposition \ref{xxpro3.6}.

Let $R$ be a down-up algebra $A(\alpha,\beta)$ in the situation
of Lemma \ref{xxlem6.3}, where
$\beta\neq -1$ if $\alpha\neq 2$. By Lemma \ref{xxlem6.3},
this algebra is generated
by $x$ and $y$ and having a normal element $\Omega:=xy-ayx$
such that $g(\Omega)=\det(g) \Omega$ for all $g\in G$.
Consider $R$ as an ungraded algebra and define
a filtration ${\mathcal F}$ by setting
$$F_n R=(\Bbbk\oplus \Bbbk x\oplus \Bbbk y\oplus \Bbbk \Omega)^n\subseteq R$$
for all $n\ge0$ \cite[Lemma 7.2(2)]{KKZ1}.
Then $F_n R$ is $G$-stable, consequently,
$A:=\gr_{\mathcal F} R$ is a connected graded algebra with $G$-action.
As a $G$-module, $A$ is isomorphic to $R$, then the $G$-action
on $A$ is inner faithful and homogeneous. We will see soon that
$A$ is noetherian. By Proposition \ref{xxpro3.6}, it suffices to show
that $\Pty(A,G)\geq 2$.

Let $x$ and $y$ denote the corresponding elements of $x$ and $y$
in $A$, and let $z$ be the image of $\Omega$ in $A$. Since
$\Omega$ is normal in $R$, we obtain an isomorphism of graded
algebras:
\begin{equation}\label{E6.3.1}\tag{E6.3.1}
A:=\gr_{\mathcal F}(R)\cong (\Bbbk_{a^{-1}}[x,y])[z;\sigma],
\end{equation}
where $\sigma$ is a graded algebra isomorphism of $\Bbbk_{a^{-1}}[x,y]
=R/(\Omega)$ determined by the equations $\Omega x=\sigma(x)\Omega$
and $\Omega y=\sigma(y)\Omega$ in $R$. Let $V'$ be the $\Bbbk$-subspace
$\Bbbk x+\Bbbk y+\Bbbk z$ of $A$. For $g\in G$, we have seen $g(z)=\det (g) z$. 
Since $G$ is a finite group, there is a basis $\{x',y'\}$ of 
$\Bbbk x+\Bbbk y$ such that $g(x')=\xi_1 x'$ and $g(y')=\xi_2 y'$, 
where $\xi_1$ and $\xi_2$ are roots of unity. Then $g(z)=\xi_1\xi_2 z$. 
Hence, for any $1\neq g\in G$, at least two of the three numbers
$\{\xi_1, \xi_2, \xi_1\xi_2\}$ 
are not 1, that is, $g$ is not a pseudo-refection. Therefore $G$ is small.
Next we prove that $\Pty(A, G)\geq 2$ in the following three cases.

Case 1: $(\alpha, \beta)=(0,1)$, which is the first case in Lemma
\ref{xxlem6.2}. Then $a=1$ and $\sigma: x\mapsto -x, y\mapsto -y$.
In this case, $A$ is a graded twist of $\Bbbk[x,y][z]$ with a
twisting system commuting with the $G$-action. By Lemma \ref{xxlem5.4}(5),
it suffices to show that $\Pty(A^{\tau}, G)\geq 2$ where
$A^{\tau}$ is the commutative polynomial ring. When $A^\tau$
is the commutative polynomial ring, $\Pty(A^{\tau}, G)\geq 2$
is equivalent to the fact that $G$ is small by Lemma \ref{xxlem4.12}.

Case 2: $(\alpha,\beta)=(2,-1)$, which is the second case in Lemma
\ref{xxlem6.2}. Then $A$ is the commutative polynomial ring
$\Bbbk[x,y,z]$. The assertion follows by Lemma \ref{xxlem4.12}.

Case 3: The ``otherwise'' case in Lemma \ref{xxlem6.2}.
In this case, every $g\in G$ is diagonal and $A$ is a skew polynomial
ring $\Bbbk_{p_{ij}}[x,y,z]$. As a consequence, $g$ acts
on $A$ diagonally. Then the assertion follows from
Theorem \ref{xxthm5.5}.
\end{proof}

\section{Some comments on smallness}
\label{xxsec7}
In this section we provide some easy examples and
comments on different definitions of smallness.
For simplicity, assume that ${\rm{char}}\; \Bbbk=0$.

Recall that a finite subgroup $G$ of the general
linear group ${\text{GL}}(V)$ is called {\it small}
if $G$ does not contain a pseudo-reflection of $V$
(except for the identity).
We now recall the definition of conventionally small.

Let $g$ be a graded algebra automorphism of a connected graded
algebra $R$. Recall from \cite{JZ} that the {\it trace function}
of $g$ is defined to be
$$Tr_R(g, t):=\sum_{i=0}^{\infty} \tr(g\mid_{R_i}) t^i\in
\Bbbk[[t]].$$

\begin{definition}\cite[Definition 0.8]{BHZ}
\label{xxdef7.1}
Let $g$ be a graded algebra automorphism of a noetherian Koszul
AS regular algebra $R$ of finite order
and let $G$ be a finite subgroup of $\Aut_{gr}(R)$.
\begin{enumerate}
\item[(1)]
The {\it reflection number} of $g$ is defined to be
$$\Rpf(g):=\GKdim R-{\text{the order of the pole of $Tr_R(g,t)$ at $t=1$.}}$$
\item[(2)]
\cite[Definition 2.2]{KKZ3}
$g$ is called a {\it quasi-reflection} if $\Rpf(g)=1$.
\item[(3)]
\cite[Definition 3.6(b)]{KKZ4}
$g$ is called a {\it quasi-bireflection} if $\Rpf(g)=2$.
\item[(4)]
The {\it reflection number} of $G$-action on $R$ is defined to be
$$\Rpf(R,G):=\min \{ \Rpf(g)\mid 1\neq g\in G\}.$$
\item[(5)]
The $G$-action on $R$ is called {\it conventionally small} or {\it c.small}
if $\Rpf(R,G)\geq 2$, or equivalently, $G$ does not contain any quasi-reflection.
\end{enumerate}
\end{definition}

In view of Lemma \ref{xxlem4.12}, we have the following equivalences of different smallness.

\begin{lemma}
\label{xxlem7.2}
Let $V$ be a finite dimensional $\Bbbk$-vector space of dimension at least $2$,  and
$G$ be a finite subgroup of ${\text{GL}}(V)$
acting on the polynomial ring $R:=\Bbbk[V]$
naturally. Then the following are equivalent.
\begin{enumerate}
\item[(1)]
$G\subseteq {\rm{GL}}(V)$ is small.
\item[(2)]
$\Pty(R,G)\geq 2$.
\item[(3)]
The $G$-action on $R$ is h.small.
\item[(4)]
The $G$-action on $R$ is c.small.
\end{enumerate}
\end{lemma}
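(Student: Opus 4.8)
The plan is to obtain the equivalence of (1), (2), and (3) for free: these three conditions are precisely the content of Lemma \ref{xxlem4.12} (which in addition supplies the isomorphism $R\ast G\cong\End_{R^G}(R)$), so nothing new is needed there. The only fresh content is the equivalence of (1) with the conventional smallness condition (4), and for this I would compute the trace function of a single group element explicitly and compare its reflection number $\Rpf(g)$ with the pseudo-reflection condition. Note that $R=\Bbbk[V]$ is noetherian Koszul Artin--Schelter regular and every $g\in G$ has finite order (as $G$ is finite), so $\Rpf(g)$ is defined as in Definition \ref{xxdef7.1}.

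First I would fix $1\neq g\in G$ and set $n=\dim V=\GKdim R$. Since $G$ is finite and $\ch\Bbbk=0$, the element $g$ acts on $V$ diagonalizably with eigenvalues $\lambda_1,\dots,\lambda_n$ that are roots of unity. Because $R=\operatorname{Sym}(V)$ and $R_i$ is the $i$-th symmetric power, summing the traces on symmetric powers gives the classical Molien-type product
$$Tr_R(g,t)=\sum_{i\geq 0}\tr(g\mid_{R_i})\,t^i=\frac{1}{\det(I-t\,g\mid_V)}=\prod_{i=1}^{n}\frac{1}{1-\lambda_i t}.$$
Each factor $\tfrac{1}{1-\lambda_i t}$ has a simple pole at $t=\lambda_i^{-1}$, and this pole occurs at $t=1$ exactly when $\lambda_i=1$; factors with $\lambda_i\neq 1$ evaluate to a nonzero finite value at $t=1$. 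As the expression is a \emph{product} of such simple factors, no cancellation can occur, so the order of the pole of $Tr_R(g,t)$ at $t=1$ equals the multiplicity of the eigenvalue $1$, namely $\dim V^g$. Consequently
$$\Rpf(g)=\GKdim R-\dim V^g=n-\dim V^g=\rk(g-\mathrm{id}_V).$$

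From this identity the equivalence (1) $\Leftrightarrow$ (4) is immediate: $g$ is a quasi-reflection, i.e.\ $\Rpf(g)=1$, if and only if $\dim V^g=n-1$, which is precisely the condition that $g$ fix a hyperplane pointwise, i.e.\ that $g$ be a pseudo-reflection. Hence $G$ contains no quasi-reflection if and only if it contains no pseudo-reflection, that is, the $G$-action is c.small if and only if $G$ is small. Combining this with Lemma \ref{xxlem4.12} closes the circle of all four equivalences.

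The one delicate point is the pole-order computation, and I expect it to be the main (though modest) obstacle. Its essential inputs are that in characteristic zero a finite-order automorphism is diagonalizable with root-of-unity eigenvalues, so the Molien expression is a product of simple rational factors, and that a product (unlike a sum) admits no cancellation of poles; together these guarantee that the pole at $t=1$ is controlled solely by the eigenvalue $1$ and its multiplicity $\dim V^g$. Once this is established, everything else is bookkeeping.
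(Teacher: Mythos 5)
Your proposal is correct and follows essentially the same route as the paper: both dispose of (1)$\Leftrightarrow$(2)$\Leftrightarrow$(3) by invoking Lemma \ref{xxlem4.12}, and both reduce (1)$\Leftrightarrow$(4) to the fact that for $\Bbbk[V]$ a quasi-reflection in the sense of Definition \ref{xxdef7.1}(2) is the same as a classical pseudo-reflection. The only difference is that the paper asserts this last fact (it is standard, going back to \cite{KKZ3}), whereas you verify it explicitly via the Molien-series identity $Tr_R(g,t)=\det(I-tg\mid_V)^{-1}$ and the pole-order count $\Rpf(g)=\dim V-\dim V^g$; your computation is the standard justification of exactly that assertion and is carried out correctly.
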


\begin{proof}
(1) $\Leftrightarrow$ (4) For polynomial algebra $\Bbbk[V]$,
a quasi-reflection in Definition \ref{xxdef7.1}(2)
agrees with the classical definition of a pseudo-reflection.
So the assertion follows. See Lemma \ref{xxlem4.12} for other parts.
\end{proof}

\begin{example}
\label{xxex7.3}
Let $R$ be the skew polynomial ring
$\Bbbk_{-1}[x,y]$ generated by $x$ and $y$ subject to
the relations $xy=-yx$. Let $G$ be the group of algebra automorphisms
of $R$ generated by $\sigma: x\mapsto y, y\mapsto x$. We claim the
following:
\begin{enumerate}
\item[(1)]
$G\subseteq {\rm{GL}}(V)$ is NOT small, where $V=\Bbbk x+\Bbbk y$.
\item[(2)]
The $G$-action on $R$ is h.small.
\item[(3)]
The $G$-action on $R$ is c.small.
\end{enumerate}

(1) is obvious. (2) is a special case of \cite[Theorem 0.5]{BHZ}.
(3) Follows from the formula in \cite[Lemma 1.7(1)]{KKZ2}.

If $B$ is the commutative polynomial ring
$\Bbbk[x,y]$ and $G$ is the group of algebra automorphisms of
$B$ generated by $\sigma: x\mapsto y, y\mapsto x$. Then we have the following
by Lemma \ref{xxlem7.2}:
\begin{enumerate}
\item[(4)]
$G\subseteq {\rm{GL}}(V)$ is not small, where $V=\Bbbk x+\Bbbk y$.
\item[(5)]
The $G$-action on $B$ is not h.small.
\item[(6)]
The $G$-action on $B$ is not c.small.
\end{enumerate}
\end{example}

\begin{example}
\label{xxex7.4}
Let $R:=\Bbbk_{-1}[x,y]$ be as in Example \ref{xxex7.3} and $G'$
be the group of algebra automorphisms of $R$
generated by $\sigma: x\mapsto i y, y\mapsto i x$ where $i^2=-1$. By a $\Bbbk$-linear
base change, we are in the situation of \cite[Example 2.3(b)]{KKZ3}. We claim
the following:
\begin{enumerate}
\item[(1)]
$G'\subseteq {\rm{GL}}(V)$ is small, where $V=\Bbbk x+\Bbbk y$.
\item[(2)]
The $G'$-action on $R$ is NOT h.small.
\item[(3)]
The $G'$-action on $R$ is NOT c.small.
\end{enumerate}

Indeed, the statement (1) is Obvious.

(2) By \cite[Example 2.3(b)]{KKZ3}, $A:=R^{G'}$ is AS regular.
Then $R$ is a free module over $A$ of rank $d\geq 2$.
So $\End_A(R)$ is the $d\times d$-matrix algebra over $A$.
Hence, $\End_A(R)$ can not be isomorphic to $R\ast G'$.
By Theorem \ref{xxthm2.7}, the $G'$-action on $R$ is NOT h.small.

(3) It follows from the formula in \cite[Lemma 1.7(1)]{KKZ2} that
$Tr_R(\sigma,t)=\frac{1}{(1-t^2)}$. Or one can use the
trace formula given in \cite[Example 2.3(b)]{KKZ3}. Then the
$G'$-action on $R$ is NOT c.small.

If $B$ is the commutative polynomial ring
$\Bbbk[x,y]$ and $G'$ is the group of algebra automorphisms of
$A$ generated by $\sigma: x\mapsto iy, y\mapsto ix$. Then we have the following
by Lemma \ref{xxlem7.2}
\begin{enumerate}
\item[(4)]
$G'\subseteq {\rm{GL}}(V)$ is small, where $V=\Bbbk x+\Bbbk y$.
\item[(5)]
The $G'$-action on $B$ is h.small.
\item[(6)]
The $G'$-action on $B$ is c.small.
\end{enumerate}
\end{example}

\begin{remark}
\label{xxrem7.5} Let $R$ be a noetherian AS regular, CM  and
Koszul algebra.
\begin{enumerate}
\item[(1)]
The above two examples show that the smallness is different from
the h.smallness in the general noncommutative setting.
\item[(2)]
We conjecture that the c.smallness is equivalent to the h.smallness.
\item[(3)]
By ``(2) $\Longrightarrow$ (1)'' in the proof of Lemma \ref{xxlem4.12},
h.smallness is stronger than c.smallness.
Therefore the conjecture in part (2) follows from
\cite[Conjecture 0.9]{BHZ}.
\item[(4)]
Let $R$ be the commutative polynomial ring as in Lemma \ref{xxlem7.2}.
By using the definition of smallness, one sees that the Auslander Theorem
holds if and only if
\begin{equation}
\label{E7.5.1}\tag{E7.5.1}
{\text{the fixed subring $R^{G'}$ is not AS regular
for all $1\neq G'\subseteq G$.}}
\end{equation}
In this case, the smallness of $G$ can be characterized as the
property \eqref{E7.5.1}.
\end{enumerate}
\end{remark}

The next example is given in \cite[Example 2.1]{CKZ1}.

\begin{example}
\label{xxex7.6}
Let $R$ be the down-up algebra $A(0, 1)$ generated by $x$ and $y$
and subject to relations
$$x^2 y=y x^2, \quad
{\text{and}} \quad x y^2=y^2 x.$$
This is a noetherian, connected graded AS regular,
CM and PI domain. Let $H$ be the Hopf algebra
$(\Bbbk D_8)^{\circ}$ where $D_8$ is the dihedral
group of order 8. There is an $H$-action on $R$
defined as in \cite[Example 2.1]{CKZ1}. Every
Hopf subalgebra $H'$ of $H$ is of the form
$(\Bbbk (D_8/N))^{\circ}$ where $N$ is a normal
subgroup of $D_8$. By \cite[Theorem 0.1]{CKZ1},
$R^{H'}$ is not AS regular for all nontrivial
Hopf subalgebra $H'\subseteq H$. Suggested by
Remark \ref{xxrem7.5}(4), the Auslander Theorem
should hold for the $H$-action on $R$. However,
this is not true by the next paragraph, which
indicates that the ``smallness'' (of a Hopf action)
should be not defined by using the {\it failure of
the AS regularity of the fixed subrings $R^{H'}$
for all nontrivial $H'\subseteq H$.} So this is
different from the commutative case \eqref{E7.5.1}.

To see that the Auslander Theorem does not hold,
we use \cite[Lemma 2.2]{CKZ1}. Let $A:=R^{H}$. 
By \cite[Example 2.1]{CKZ1}, $A\cong\Bbbk[x,y,z,t]/(xy-zt^2)$ 
where we view $x,y,z$ of degree 4 and $t$ of degree 2. 
Hence it is AS-Gorenstein (see \cite[Theorem 0.4]{KKZ4}, for instance).
Note that each component in \cite[Lemma 2.2(1,2,3)]{CKZ1} 
is a right $A$-module, and $R$, as a right $A$-module, 
is the direct sum of all the components in \cite[Lemma 2.2]{CKZ1}. 
Hence $R=A\oplus A(-1)\oplus A(-2) \oplus M$ for some $A$-module 
$M$ since the second and the third components in 
\cite[Lemma 2.2]{CKZ1}(2,3) are isomorphic to $A(-1)$ and $A(-2)$, 
respectively.
Then $\End_A(R)$ contains nonzero elements of degree $-1$.
Thus $\End_A(R)$ can not be isomorphic to $R\# H$.
This means that the Auslander Theorem fails for this
$H$-action on $R$.

As a consequence of Theorem \ref{xxthm0.2},
$\Pty(R,H)=1$.
\end{example}

Finally we include another example which could be a test
example for the conjecture in Remark \ref{xxrem7.5}(2).

\begin{example}
\label{xxex7.7}
Let $R$ be the non-PI Sklyanin algebra of dimension at least $3$.
By \cite[Corollary 6.3]{KKZ3}, $R$ has no quasi-reflection of
finite order. By definition, every finite subgroup $G$
in $\Aut_{gr}(R)$ is c.small. Following Remark \ref{xxrem7.5}(2)
we conjecture that every such $G$ is h.small.
\end{example}

What we have learned from Theorems \ref{xxthm0.2} and \ref{xxthm2.7},
examples and comments in this section is: {\it in the
general Hopf action setting, the homological smallness
is the most reasonable replacement of the classical smallness.}

%\vspace{5mm}

%\bibliography{}

\end{document}